\numberwithin{equation}{section}
\newtheorem{theorem}{Theorem}[section]
\newtheorem{definition}[theorem]{Definition}
\newtheorem{proposition}[theorem]{Proposition}
\newtheorem{lemma}[theorem]{Lemma}
\newtheorem{conjecture}[theorem]{Conjecture}
\newtheorem{corollary}[theorem]{Corollary}
\theoremstyle{definition}
\newtheorem{example}[theorem]{Example}
\newtheorem{remark}[theorem]{Remark}
\newcommand{\cU}{\mbox{${\cal U}$}}
\newcommand{\cW}{\mbox{${\cal W}$}}
\title{\textbf{Some open problems in the context of\\ skew $PBW$ extensions and semi-graded rings}}
\author{Oswaldo Lezama\\
\texttt{jolezamas@unal.edu.co}
\\ Seminario de Álgebra Constructiva - SAC$^2$\\ Departamento de Matemáticas\\ Universidad Nacional de
Colombia, Sede Bogot\'a}
\date{}
\begin{document}
\maketitle
\begin{abstract}
\noindent In this paper we discuss some open problems of
non-commutative algebra and non-commutative algebraic geometry
from the approach of skew $PBW$ extensions and semi-graded rings.
More exactly, we will analyze the isomorphism arising in the
investigation of the Gelfand-Kirillov conjecture about the
commutation between the center and the total ring of fractions of
an Ore domain. The Serre's conjecture will be discussed for a
particular class of skew $PBW$ extensions. The questions about the
noetherianity and the Zariski cancellation property of
Artin-Schelter regular algebras will be reformulated for
semi-graded rings. Advances for the solution of some of the
problems are included.
\bigskip

\noindent \textit{Key words and phrases.} Gelfand-Kirillov
conjecture, Serre's conjecture, Artin-Schelter regular algebras,
Zariski cancellation problem, skew $PBW$ extensions, semi-graded
rings.

\bigskip

\noindent 2010 \textit{Mathematics Subject Classification.}
Primary: 16S36. Secondary: 16U20, 16D40, 16E05, 16E65, 16S38,
16S80, 16W70, 16Z05.
\end{abstract}
%-------------------------------------------------
%-------------------------------------------------

\section{Introduction}

Commutative algebra and algebraic geometry are source of many
interesting problems of non-commutative algebra and geometry. For
example, the famous theorem of Serre of commutative projective
algebraic geometry states that the category of coherent sheaves
over the projective $n$-space $\mathbb{P}^n$ is equivalent to a
category of noetherian graded modules over a graded commutative
polynomial ring. The study of this equivalence for non-commutative
algebras gave origin to an intensive study of the so-called
non-commutative projective schemes associated to non-commutative
finitely graded noetherian algebras, and produced a beautiful
result, due to Artin and Zhang, and also proved independently by
Verevkin, known as the non-commutative version of Serre's theorem.
Another famous example that has to be mentioned is the Zariski
cancellation problem on the algebra of commutative polynomials
$K[x_1,\dots,x_n]$ over a field $K$, this problem asks if given a
commutative $K$-algebra $B$ and an isomorphism
$K[x_1,\dots,x_n][t]\cong B[t]$, follows that
$K[x_1,\dots,x_n]\cong B$. This problem has been reformulated for
non-commutative algebras and occupied the attention of many
researchers in the last years. In this paper we will present some
famous open problems of non-commutative algebra and
non-commutative algebraic geometry, but interpreting them from the
approach of skew $PBW$ extensions and semi-graded rings. For some
of these problems we will include some advances. Skew $PBW$
extension were define first in \cite{LezamaGallego}, many
important algebras coming from mathematical physics are particular
examples of skew $PBW$ extensions: The enveloping algebra
$\cal{U}(\cal{G})$, where $\cal{G}$ is a finite dimensional Lie
algebra, the algebra of $q$-differential operators, the algebra of
shift operators, the additive analogue of the Weyl algebra, the
multiplicative analogue of the Weyl algebra, the quantum algebra
$\mathcal{U}'(so(3,K))$, $3$-dimensional skew polynomial algebras,
the dispin algebra, the Woronowicz algebra, the $q$-Heisenberg
algebra, among many others (see \cite{lezamareyes1} for the
definition of these algebras). On the other hand, the semi-graded
rings generalize the finitely graded algebras and the skew PBW
extensions, this general class of non-commutative rings was
introduced in \cite{lezamalatorre}.

The problems that we will analyze and formulate from the general
approach of skew $PBW$ extensions and semi-graded rings are: 1)
Investigate the Serre-Artin-Zhang-Verevkin theorem (see
\cite{Artin2}) for semi-graded rings. 2) Study the
Gelfand-Kirillov conjecture (\cite{GK}) for bijective skew $PBW$
extensions over Ore domains. Moreover, investigate for them the
commutation between the center and the total ring of fractions. 3)
Give a matrix-constructive proof of the Quillen-Suslin theorem
(see \cite{Serre}, \cite{Quillen} and \cite{Suslin1}) for a
particular class of skew polynomials rings. Include an algorithm
that computes a basis of a given finitely generated projective
module. 4) Investigate if the semi-graded Artin-Schelter regular
algebras (\cite{Artin1}) are noetherian domains. 5) Investigate
the cancellation property (see \cite{BellZhang}) for a particular
class of semi-graded Artin-Schelter regular algebras of global
dimension $3$.

Despite of the paper is a revision of previous works, written with
the purpose of reformulating some famous open problems in a more
general context, some new results are included. The first problem
was solved in \cite{lezamalatorre} assuming that the semi-graded
rings are domains, in the present paper we will prove an improved
result avoiding the restriction about the absence zero divisors.
With respect to the problem 4), we give an advance showing that a
particular class of semi-graded Artin-Schelter regular algebras
are noetherian domains. Moreover, all of examples of semi-graded
Artin-Schelter regular algebras exhibited in the present paper are
noetherian domains. The new results of the paper are concentrated
in Theorem \ref{theorem16.7.13}, Corollary \ref{corollary17.5.14}
and Theorem \ref{theorem18.4.12}.

The precise description of the classical famous open problems, and
the new reformulation of them in the context of the present paper,
will be given below in each section. We have included basic
definitions and facts about finitely graded algebras, skew $PBW$
extensions and semi-graded rings, needed for a better
understanding of the paper. If not otherwise noted, all modules
are left modules; $B$ will denote a non-commutative ring; $Z(B)$
is the center of $B$; $M_s(B)$ be the ring of $s\times s$ matrices
over $B$, and $ GL_s(B):=\{F\in M_s(B)\mid F\, \text{is invertible
in}\, M_s(B)\}$ be the general linear group over $S$; $K$ will be
a field.

\subsection{Finitely graded algebras}\label{subsection1.1}

Our first problem concerns with the generalization of a famous
theorem of non-commutative projective algebraic geometry over
finitely graded algebras. In this subsection we review the theorem
and some well-known basic facts of finitely graded algebras (see
\cite{Rogalski}).

\begin{definition}
Let $K$ be a field. It is said that a $K$-algebra $A$ is finitely
graded if the following conditions hold:
\begin{itemize}
\item $A$ is $\mathbb{N}$-graded: $A=\bigoplus_{n\geq 0} A_n$.
\item $A$ is connected, i.e., $A_0=K$.
\item $A$ is finitely generated as $K$-algebra.
\end{itemize}
\end{definition}
The most remarkable examples of finitely graded algebras for which
non-commutative projective algebraic geometry has been developed
are the quantum plane, the Jordan plane, the Sklyanin algebra and
the multi-parametric quantum affine $n$-space (see
\cite{Rogalski}).

Let $A$ be a finitely graded $K$-algebra and $M=\bigoplus_{n\in
\mathbb{Z}} M_n$ be a $\mathbb{Z}$-graded $A$-module which is
finitely generated. Then,
\begin{enumerate}
\item[\rm (i)]For every $n\in \mathbb{Z}$, $\dim_{K}M_n<\infty$.
\item[\rm (ii)]The \textit{Hilbert series} of $A$ is defined by
\begin{center}
$h_A(t):=\sum_{n=0}^\infty(\dim_K A_n)t^n$.
\end{center}
\item[\rm (iii)]The \textit{Gelfand-Kirillov dimension} of $A$ is defined by
\begin{equation}\label{equ17.2.2}
{\rm GKdim}(A):=\sup_{V}\overline{\lim_{n\to \infty}}\log_n\dim_K
V^n,
\end{equation}
where $V$ ranges over all frames of $A$ and $V^n:=\, _K\langle
v_1\cdots v_n| v_i\in V\rangle$ (a \textit{frame} of $A$ is a
finite dimensional $K$-subspace of $A$ such that $1\in V$; since
$A$ is a $K$-algebra, then $K\hookrightarrow A$, and hence, $K$ is
a frame of $A$ of dimension $1$).
\item[\rm (iv)]A famous theorem of Serre on commutative projective algebraic geometry states that
the category of coherent sheaves over the projective $n$-space
$\mathbb{P}^n$ is equivalent to a category of noetherian graded
modules over a graded commutative polynomial ring. The study of
this equivalence for non-commutative finitely graded noetherian
algebras is known as the non-commutative version of Serre's
theorem and is due to Artin, Zhang and Verevkin. In the next
numerals we give the ingredients needed for the formulation of
this theorem (see \cite{Artin2}, \cite{Verevkin}).
\item[\rm (v)]Suppose that $A$ is left noetherian. Let ${\rm gr}-A$ be the abelian category of
finitely generated $\mathbb{Z}$-graded left $A$-modules. The
abelian category ${\rm qgr}-A$ is defined in the following way:
The objects are the same as the objects in ${\rm gr}-A$, and we
let $\pi:{\rm gr}-A\to {\rm qgr}-A$ be the identity map on the
objects. The morphisms in ${\rm qgr}-A$ are defined in the
following way:
\begin{center}
$Hom_{\rm {qgr}-A}(\pi(M),\pi(N)):=\underrightarrow{\lim}Hom_{{\rm
gr}-A}(M_{\geq n}, N/T(N))$,
\end{center}
where the direct limit is taken over maps of abelian groups
\begin{center}
$Hom_{{\rm gr}-A}(M_{\geq n}, N/T(N))\to Hom_{{\rm gr}-A}(M_{\geq
n+1}, N/T(N))$
\end{center}
induced by the inclusion homomorphism $M_{\geq n+1}\to M_{\geq
n}$; $T(N)$ is the torsion submodule of $N$ and an element $x\in
N$ is torsion if $A_{\geq n}x=0$ for some $n\geq 0$. The pair
$(\rm {qgr}-A,\pi(A))$ is called the \textit{non-commutative
projective scheme} associated to $A$, and denoted by $\rm
{qgr}-A$. Thus, ${\rm qgr}-A$ is a quotient category, ${\rm
qgr}-A={\rm gr}-A/{\rm tor}-A$.
\item[\rm (vi)]\textit{$($Serre's theorem$)$ Let $A$ be a commutative finitely graded $K$-algebra generated in degree $1$.
Then, there exists an equivalence of categories}
\begin{center}
${\rm qgr}-A\simeq {\rm coh}({\rm proj}(A))$.
\end{center}
\textit{In particular},
\begin{center}
${\rm qgr}-K[x_0,\dots,x_n]\simeq {\rm coh}(\mathbb{P}^n)$.
\end{center}
\item[\rm (vii)]Suppose that $A$ is left noetherian and let $i\geq 0$, it is said that $A$ satisfies the $\chi_i$ condition if for every
finitely generated $\mathbb{Z}$-graded $A$-module $M$,
$\dim_K(\underline{Ext}_A^j(K,M))<\infty$ for any $j\leq i$; the
algebra $A$ satisfies the $\chi$ condition if it satisfies the
$\chi_i$ condition for all $i\geq 0$ (since $K$ is finitely
generated as $A$-module, then
$\dim_K(\underline{Ext}_A^j(K,M))=\dim_K(Ext_A^j(K,M))$).
\item[\rm (viii)]\textit{$($Serre-Artin-Zhang-Verevkin theorem$)$ If $A$ is left noetherian and satisfies $\chi_1$,
then $\Gamma(\pi(A))_{\geq 0}$ is left noetherian and there exists
an equivalence of categories
\begin{equation}\label{equation1.2}
{\rm qgr}-A\simeq {\rm qgr}-\Gamma(\pi(A))_{\geq 0},
\end{equation}\label{equivalenceSAZV}
where $\Gamma(\pi(A))_{\geq 0}:=\bigoplus_{d=0}^\infty Hom_{{\rm
qgr}-A}(\pi(A),s^d(\pi(A)))$ and $s$ is the autoequivalence of
${\rm qg}r-A$ defined by the shifts of degrees.}
\end{enumerate}

\subsection{Skew $PBW$ extensions}

The second ingredient needed for setting some of the problems that
we will discuss in the present paper are the skew $PBW$ extensions
introduced first in \cite{LezamaGallego}. Skew $PBW$ extensions
are non-commutative rings of polynomial type and cover many
examples of quantum algebras and rings coming from mathematical
physics: Habitual ring of polynomials in several variables, Weyl
algebras, enveloping algebras of finite dimensional Lie algebras,
algebra of $q$-differential operators, many important types of Ore
algebras, algebras of diffusion type, additive and multiplicative
analogues of the Weyl algebra, dispin algebra
$\mathcal{U}(osp(1,2))$, quantum algebra $\mathcal{U}'(so(3,K))$,
Woronowicz algebra $\mathcal{W}_{\nu}(\mathfrak{sl}(2,K))$, Manin
algebra $\mathcal{O}_q(M_2(K))$, coordinate algebra of the quantum
group $SL_q(2)$, $q$-Heisenberg algebra \textbf{H}$_n(q)$, Hayashi
algebra $W_q(J)$, differential operators on a quantum space
$D_{\textbf{q}}(S_{\textbf{q}})$, Witten's deformation of
$\mathcal{U}(\mathfrak{sl}(2,K))$, multiparameter Weyl algebra
$A_n^{Q,\Gamma}(K)$, quantum symplectic space
$\mathcal{O}_q(\mathfrak{sp}(K^{2n}))$, some quadratic algebras in
3 variables, some 3-dimensional skew polynomial algebras,
particular types of Sklyanin algebras, among many others. For a
precise definition of any of these rings and algebras see
\cite{lezamareyes1}

\begin{definition}[\cite{LezamaGallego}]\label{gpbwextension}
Let $R$ and $A$ be rings. We say that $A$ is a \textit{skew $PBW$
extension of $R$} $($also called a $\sigma-PBW$ extension of
$R$$)$ if the following conditions hold:
\begin{enumerate}
\item[\rm (i)]$R\subseteq A$.
\item[\rm (ii)]There exist finitely many elements $x_1,\dots ,x_n\in A$ such $A$ is a left $R$-free module with basis
\begin{center}
${\rm Mon}(A):= \{x^{\alpha}=x_1^{\alpha_1}\cdots
x_n^{\alpha_n}\mid \alpha=(\alpha_1,\dots ,\alpha_n)\in
\mathbb{N}^n\}$, with $\mathbb{N}:=\{0,1,2,\dots\}$.
\end{center}
The set ${\rm Mon}(A)$ is called the set of standard monomials of
$A$.
\item[\rm (iii)]For every $1\leq i\leq n$ and $r\in R-\{0\}$ there exists $c_{i,r}\in R-\{0\}$ such that
\begin{equation}\label{sigmadefinicion1}
x_ir-c_{i,r}x_i\in R.
\end{equation}
\item[\rm (iv)]For every $1\leq i,j\leq n$ there exists $c_{i,j}\in R-\{0\}$ such that
\begin{equation}\label{sigmadefinicion2}
x_jx_i-c_{i,j}x_ix_j\in R+Rx_1+\cdots +Rx_n.
\end{equation}
Under these conditions we will write $A:=\sigma(R)\langle
x_1,\dots ,x_n\rangle$.
\end{enumerate}
\end{definition}
Associated to a skew $PBW$ extension $A=\sigma(R)\langle x_1,\dots
,x_n\rangle$ there are $n$ injective endomorphisms
$\sigma_1,\dots,\sigma_n$ of $R$ and $\sigma_i$-derivations, as
the following proposition shows.

\begin{proposition}[\cite{LezamaGallego}, Proposition 3]\label{sigmadefinition}
Let $A$ be a skew $PBW$ extension of $R$. Then, for every $1\leq
i\leq n$, there exist an injective ring endomorphism
$\sigma_i:R\rightarrow R$ and a $\sigma_i$-derivation
$\delta_i:R\rightarrow R$ such that
\begin{center}
$x_ir=\sigma_i(r)x_i+\delta_i(r)$,
\end{center}
for each $r\in R$.
\end{proposition}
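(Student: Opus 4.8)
The plan is to extract the endomorphisms and derivations directly from conditions (iii) and (iv) of Definition \ref{gpbwextension}, using the uniqueness of coordinates in the left $R$-free module $A$. First I would fix $1\leq i\leq n$ and, for each $r\in R$, use \eqref{sigmadefinicion1}: for $r\neq 0$ there is $c_{i,r}\in R-\{0\}$ with $x_ir-c_{i,r}x_i\in R$, so there is $d_{i,r}\in R$ with $x_ir=c_{i,r}x_i+d_{i,r}$; for $r=0$ set $c_{i,0}:=0$ and $d_{i,0}:=0$, which is consistent since $x_i\cdot 0=0$. Then I define $\sigma_i(r):=c_{i,r}$ and $\delta_i(r):=d_{i,r}$ for all $r\in R$, so that the displayed identity $x_ir=\sigma_i(r)x_i+\delta_i(r)$ holds by construction. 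The key point making these well-defined functions is that ${\rm Mon}(A)$ is a left $R$-basis of $A$, so the expansion of $x_ir$ in the basis elements $1=x^0$ and $x_i=x^{e_i}$ (and the other monomials, whose coefficients here vanish) is unique; hence $c_{i,r}$ and $d_{i,r}$ are uniquely determined by $r$.

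Next I would verify that $\sigma_i$ is a ring endomorphism. Additivity: from $x_i(r+s)=x_ir+x_is$ I get $\sigma_i(r+s)x_i+\delta_i(r+s)=(\sigma_i(r)+\sigma_i(s))x_i+(\delta_i(r)+\delta_i(s))$, and uniqueness of coordinates forces $\sigma_i(r+s)=\sigma_i(r)+\sigma_i(s)$ and $\delta_i(r+s)=\delta_i(r)+\delta_i(s)$. Multiplicativity: compute $x_i(rs)$ two ways, $x_i(rs)=\sigma_i(rs)x_i+\delta_i(rs)$ and $x_i(rs)=(x_ir)s=(\sigma_i(r)x_i+\delta_i(r))s=\sigma_i(r)(x_is)+\delta_i(r)s=\sigma_i(r)\sigma_i(s)x_i+\sigma_i(r)\delta_i(s)+\delta_i(r)s$; comparing the $x_i$-coefficient gives $\sigma_i(rs)=\sigma_i(r)\sigma_i(s)$, and comparing the constant term gives $\delta_i(rs)=\sigma_i(r)\delta_i(s)+\delta_i(r)s$, which is exactly the $\sigma_i$-derivation rule. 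Taking $r=s=1$ in \eqref{sigmadefinicion1} shows $x_i=\sigma_i(1)x_i+\delta_i(1)$, so $\sigma_i(1)=1$ and $\delta_i(1)=0$; thus $\sigma_i$ is a unital ring endomorphism and $\delta_i$ is a $\sigma_i$-derivation.

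It remains to show $\sigma_i$ is injective. Suppose $\sigma_i(r)=0$ for some $r\in R$; by the definition of $c_{i,r}$, if $r\neq 0$ then $c_{i,r}=\sigma_i(r)\in R-\{0\}$, a contradiction, so $r=0$. Hence $\ker\sigma_i=0$ and $\sigma_i$ is injective, completing the proof. The only mildly delicate point — really the crux of the argument — is justifying that $c_{i,r}$ and the remainder term genuinely define functions of $r$ rather than merely existing for each $r$; this is handled entirely by the freeness of $A$ over $R$ with basis ${\rm Mon}(A)$, so once that uniqueness is invoked the rest is a routine coordinate comparison. I do not anticipate any serious obstacle; condition (iv) is not even needed for this proposition, only (i)–(iii).
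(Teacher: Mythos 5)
Your proof is correct and follows essentially the same route as the original argument in the cited source: define $\sigma_i(r)$ and $\delta_i(r)$ as the coordinates of $x_ir$ with respect to the basis elements $x_i$ and $1$ of the free left $R$-module $A$, use uniqueness of these coordinates to get well-definedness, additivity, multiplicativity and the $\sigma_i$-derivation rule, and deduce injectivity from condition (iii) forcing $c_{i,r}\neq 0$ for $r\neq 0$. No gaps; your closing remark that only (i)--(iii) are needed is also accurate.
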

From Definition \ref{gpbwextension} (iv), there exists a unique
finite set of constants $c_{ij}, d_{ij}, a_{ij}^{(k)}\in R$ such
that
\begin{equation}\label{equation1.2.1}
x_jx_i=c_{ij}x_ix_j+a_{ij}^{(1)}x_1+\cdots+a_{ij}^{(n)}x_n+d_{ij},
\ \text{for every}\  1\leq i,j\leq n.
\end{equation}

A particular case of skew $PBW$ extension is when all derivations
$\delta_i$ are zero. Another interesting case is when all
$\sigma_i$ are bijective and the constants $c_{ij}$ are
invertible.

\bigskip

\begin{definition}[\cite{LezamaGallego}]\label{sigmapbwderivationtype}
Let $A$ be a skew $PBW$ extension.
\begin{enumerate}
\item[\rm (a)]
$A$ is quasi-commutative if the conditions {\rm(}iii{\rm)} and
{\rm(}iv{\rm)} in Definition \ref{gpbwextension} are replaced by
\begin{enumerate}
\item[\rm (iii')]For every $1\leq i\leq n$ and $r\in R-\{0\}$ there exists $c_{i,r}\in R-\{0\}$ such that
\begin{equation}
x_ir=c_{i,r}x_i.
\end{equation}
\item[\rm (iv')]For every $1\leq i,j\leq n$ there exists $c_{i,j}\in R-\{0\}$ such that
\begin{equation}
x_jx_i=c_{i,j}x_ix_j.
\end{equation}
\end{enumerate}
\item[\rm (b)]$A$ is bijective if $\sigma_i$ is bijective for
every $1\leq i\leq n$ and $c_{i,j}$ is invertible for any $1\leq
i<j\leq n$.
\end{enumerate}
\end{definition}
Observe that quasi-commutative skew $PBW$ extensions are
$\mathbb{N}$-graded rings, but arbitrary skew $PBW$ extensions are
semi-graded rings as we will see in the next subsection. Actually,
the main motivation for constructing the non-commutative algebraic
geometry of semi-graded rings is due to arbitrary skew $PBW$
extensions.

Many properties of skew $PBW$ extensions have been studied in
previous works (see \cite{Lezama-OreGoldie}, \cite{Acosta2},
\cite{lezamareyes1}, \cite{reyesA}, \cite{reyessuarez},
\cite{reyessuarez2}, \cite{ReyesSuarez}). The next theorem
establishes two ring theoretic results for skew $PBW$ extensions.

\begin{theorem}[\cite{lezamareyes1}, \cite{Lezama-OreGoldie}]\label{1.3.4}
Let $A$ be a bijective skew $PBW$ extension of a ring $R$.
\begin{enumerate}
\item[\rm (i)]$($Hilbert Basis Theorem$)$ If $R$ is a left $($right$)$ Noetherian ring then $A$ is also left $($right$)$ Noetherian.
\item[\rm (ii)]$($Ore's theorem$)$ If $R$ is a left Ore
domain $R$. Then $A$ is also a left Ore domain.
\end{enumerate}
\end{theorem}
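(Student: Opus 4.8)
The plan is to reduce both assertions to the case of iterated Ore extensions by passing to an associated graded ring, and, for the Ore condition, to first localize at the nonzero elements of $R$ so as to work over a division ring. Equip $A$ with the filtration by total degree of standard monomials: for $m\geq 0$ put $F_m:=\bigoplus_{|\alpha|\leq m}Rx^{\alpha}$ and $F_{-1}:=0$. This is an exhaustive positive filtration by finitely generated free $R$-modules, and from \eqref{sigmadefinicion1}, \eqref{equation1.2.1} and Proposition \ref{sigmadefinition}, using the bijectivity of the $\sigma_i$ and the invertibility of the $c_{ij}$ (Definition \ref{sigmapbwderivationtype}), one checks that the associated graded ring $\mathrm{gr}(A)=\bigoplus_{m\geq 0}F_m/F_{m-1}$ is isomorphic to the \emph{quasi-commutative} bijective skew $PBW$ extension $A^{\sigma}:=\sigma(R)\langle z_1,\dots,z_n\rangle$ having the same $\sigma_i$ and $c_{ij}$ as $A$ but with every $\delta_i$ and every lower-order coefficient set to zero. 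It is well known (and a short induction) that such a quasi-commutative bijective extension is an iterated skew polynomial ring $R[z_1;\theta_1]\cdots[z_n;\theta_n]$ in which each $\theta_k$ is an automorphism; this is precisely where the bijectivity hypotheses enter.

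For (i): applying the classical skew Hilbert basis theorem for Ore extensions by automorphisms $n$ times shows that $\mathrm{gr}(A)\cong A^{\sigma}$ is left (resp.\ right) Noetherian whenever $R$ is, and a ring carrying an exhaustive positive filtration with left (resp.\ right) Noetherian associated graded is itself left (resp.\ right) Noetherian. Hence $A$ is left (resp.\ right) Noetherian.

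For (ii): first, $A$ is a domain, since $\mathrm{gr}(A)\cong A^{\sigma}$ is an iterated Ore extension of the domain $R$ by automorphisms with zero derivations — hence a domain — and in a positively filtered ring whose associated graded is a domain the leading-term map is multiplicative, so $A$ has no zero divisors. Next I would localize at $\Sigma:=R\setminus\{0\}$. From $x_ic=\sigma_i(c)x_i+\delta_i(c)$ one gets, whenever $c$ is invertible, $x_ic^{-1}=\sigma_i(c)^{-1}x_i-\sigma_i(c)^{-1}\delta_i(c)c^{-1}$, and combining this with a degree induction that invokes the left Ore property of $R$ (and the fact that in a left Ore domain finitely many nonzero left ideals have nonzero intersection) one shows that $\Sigma$ is a left Ore set in $A$; the localization $A_{\Sigma}$ is then a bijective skew $PBW$ extension $\sigma(K)\langle x_1,\dots,x_n\rangle$ of the division ring $K:=\Sigma^{-1}R$. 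Now $A_{\Sigma}$ is positively filtered with each piece finite-dimensional over $K$, and its associated graded is an iterated Ore extension of $K$ by automorphisms, hence left Noetherian; so $A_{\Sigma}$ is a left Noetherian domain, and a left Noetherian domain is a left Ore domain (otherwise nonzero $a,b$ with $Ka\cap Kb=0$ would make $Ka+Kab+Kab^{2}+\cdots$ an infinite strictly ascending chain of left ideals). Thus $A_{\Sigma}$ has a division ring of fractions $D$. Finally, given $a,b\in A$ with $b\neq 0$, take a common left multiple $va=ub$ in $A_{\Sigma}$ with $v\neq 0$, write $u$ and $v$ over a common denominator in $\Sigma$, and use left Ore in $R$ once more to clear it; since $A$ is a domain this produces $a',b'\in A$ with $b'\neq 0$ and $b'a=a'b$. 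Hence $A$ is a left Ore domain, with division ring of fractions $D$.

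The step I expect to be the main obstacle is the verification that $\Sigma=R\setminus\{0\}$ is a left Ore set in $A$ and that $A_{\Sigma}$ is again a skew $PBW$ extension over $K$: this is the only place where the bijectivity of the $\sigma_i$ (and invertibility of the $c_{ij}$) is genuinely needed for part (ii), and the degree bookkeeping there is the least routine piece. Once the problem is transported to $A_{\Sigma}$, the remainder is a formal application of the iterated-Ore-extension machinery (Hilbert basis theorem, Goldie's theorem) together with the standard filtered-to-graded transfer lemmas.
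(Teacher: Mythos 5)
The paper itself states Theorem \ref{1.3.4} without proof, citing \cite{lezamareyes1} and \cite{Lezama-OreGoldie}, and your argument reproduces the proofs given there: for (i), the total-degree filtration whose associated graded ring is a quasi-commutative bijective extension, hence an iterated Ore extension by automorphisms, followed by the standard filtered-to-graded transfer, is exactly the argument of \cite{lezamareyes1}. For (ii), your reduction to coefficients in $Q(R)$ and the concluding denominator-clearing is the same mechanism as in \cite{Lezama-OreGoldie} (there organized as showing that $\sigma(Q(R))\langle x_1,\dots,x_n\rangle$ is a left Noetherian domain, hence left Ore, and that $A$ is a left order in its quotient division ring), and the verification you flag as the main obstacle --- that $R\setminus\{0\}$ behaves as a left Ore/denominator set in $A$, using $x_ic=\sigma_i(c)x_i+\delta_i(c)$, bijectivity of the $\sigma_i$, and the left Ore condition in $R$ to absorb the $\delta_i$-terms --- is precisely the technical lemma established in that reference, so your proposal is correct and essentially the cited proof.
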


\subsection{Semi-graded rings}

\noindent The third notion needed for setting the problems
analyzed in the paper are the semi-graded rings that we will
recall in this subsection. In \cite{lezamalatorre} were proved
some properties of them, in particular, it was showed that graded
rings, finitely graded algebras and skew $PBW$ extensions are
particular cases of this type of non-commutative rings.

\begin{definition}
Let $B$ be a ring. We say that $B$ is semi-graded $(SG)$ if there
exists a collection $\{B_n\}_{n\geq 0}$ of subgroups $B_n$ of the
additive group $B^+$ such that the following conditions hold:
\begin{enumerate}
\item[\rm (i)]$B=\bigoplus_{n\geq 0}B_n$.
\item[\rm (ii)]For every $m,n\geq 0$, $B_mB_n\subseteq B_0\oplus \cdots \oplus B_{m+n}$.
\item[\rm (iii)]$1\in B_0$.
\end{enumerate}
The collection $\{B_n\}_{n\geq 0}$ is called a semi-graduation of
$B$ and we say that the elements of $B_n$ are homogeneous of
degree $n$. Let $B$ and $C$ be semi-graded rings and let $f: B\to
C$ be a ring homomorphism, we say that $f$ is homogeneous if
$f(B_n)\subseteq C_{n}$ for every $n\geq 0$.
\end{definition}

\begin{definition}
Let $B$ be a $SG$ ring and let $M$ be a $B$-module. We say that
$M$ is a $\mathbb{Z}$-semi-graded, or simply semi-graded, if there
exists a collection $\{M_n\}_{n\in \mathbb{Z}}$ of subgroups $M_n$
of the additive group $M^+$ such that the following conditions
hold:
\begin{enumerate}
\item[\rm (i)]$M=\bigoplus_{n\in \mathbb{Z}} M_n$.
\item[\rm (ii)]For every $m\geq 0$ and $n\in \mathbb{Z}$, $B_mM_n\subseteq \bigoplus_{k\leq m+n}M_k$.
\end{enumerate}
We say that $M$ is positively semi-graded, also called
$\mathbb{N}$-semi-graded, if $M_n=0$ for every $n<0$. Let $f: M\to
N$ be an homomorphism of $B$-modules, where $M$ and $N$ are
semi-graded $B$-modules, we say that $f$ is homogeneous if
$f(M_n)\subseteq N_n$ for every $n\in \mathbb{Z}$.
\end{definition}

\begin{definition}
Let $B$ be a $SG$ ring and $M$ be a semi-graded module over $B$.
Let $N$ be a submodule of $M$, we say that $N$ is a semi-graded
submodule of $M$ if $N=\bigoplus_{n\in \mathbb{Z}}N_n$.
\end{definition}

\begin{definition}\label{definition17.5.4}
Let $B$ be a ring. We say that $B$ is finitely semi-graded $(FSG)$
if $B$ satisfies the following conditions:
\begin{enumerate}
\item[\rm (i)]$B$ is $SG$.
\item[\rm (ii)]There exists finitely many elements $x_1,\dots,x_n\in B$ such that the
subring generated by $B_0$ and $x_1,\dots,x_n$ coincides with $B$.
\item[\rm (iii)]For every $n\geq 0$, $B_n$ is a free $B_0$-module of finite dimension.
\end{enumerate}
Moreover, if $M$ is a $B$-module, we say that $M$ is finitely
semi-graded if $M$ is semi-graded, finitely generated, and for
every $n\in \mathbb{Z}$, $M_n$ is a free $B_0$-module of finite
dimension.
\end{definition}

\begin{remark}
Observe if $B$ is $FSG$, then $B_0B_p=B_p$ for every $p\geq 0$,
and if $M$ is finitely semi-graded, then $B_0M_n=M_n$ for all
$n\in \mathbb{Z}$.
\end{remark}

From the definitions above we get the following conclusions.

\begin{proposition}[\cite{lezamalatorre}]\label{proposition17.5.5}
Let $B=\bigoplus_{n\geq 0}B_n$ be a $SG$ ring and $I$ be a proper
two-sided ideal of $B$ semi-graded as left ideal. Then,
\begin{enumerate}
\item[\rm (i)]$B_0$ is a subring of $B$. Moreover, for any $n\geq 0$, $B_0\oplus \cdots \oplus B_{n}$ is a $B_0-B_0$-bimodule, as well as $B$.
\item[\rm (ii)]$B$ has a standard $\mathbb{N}$-filtration given by
\begin{equation}\label{equ17.5.1}
F_n(B):=B_0\oplus \cdots \oplus B_{n}.
\end{equation}
\item[\rm (iii)]The associated graded ring $Gr(B)$ satisfies
\begin{center}
$Gr(B)_n\cong B_n$, for every $n\geq 0$ $($isomorphism of abelian
groups$)$.
\end{center}
\item[\rm (iv)]Let $M=\bigoplus_{n\in \mathbb{Z}}M_n$ be a semi-graded $B$-module and $N$ a submodule of $M$. The following conditions are equivalent:
\begin{enumerate}
\item[\rm (a)]$N$ is semi-graded.
\item[\rm (b)]For every $z\in N$, the homogeneous components of $z$ are in $N$.
\item[\rm (c)]$M/N$ is semi-graded with semi-graduation given by
\begin{center}
$(M/N)_n:=(M_n+N)/N$, $n\in \mathbb{Z}$.
\end{center}
\end{enumerate}
\item[\rm (v)]$B/I$ is $SG$.
\end{enumerate}
\end{proposition}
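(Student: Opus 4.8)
The plan is to verify the five items in order, since each one is a fairly direct unwinding of the definitions of semi-graded ring, the standard filtration, and the associated graded construction. For (i), I would take $r,s\in B_0$; by condition (ii) of the definition of $SG$ ring, $B_0B_0\subseteq B_0$, so $B_0$ is closed under multiplication, and it contains $1$ by condition (iii), hence it is a subring; the bimodule claims for $B_0\oplus\cdots\oplus B_n$ and for $B$ follow again from $B_mB_n\subseteq B_0\oplus\cdots\oplus B_{m+n}$ applied with $m=0$ on one side and with $n=0$ on the other. For (ii), I would check that $F_n(B):=B_0\oplus\cdots\oplus B_n$ satisfies $F_m(B)F_n(B)\subseteq F_{m+n}(B)$: a product $B_iB_j$ with $i\le m$, $j\le n$ lies in $B_0\oplus\cdots\oplus B_{i+j}\subseteq F_{m+n}(B)$ by condition (ii) of $SG$, and clearly $F_n(B)\subseteq F_{n+1}(B)$ with $\bigcup_n F_n(B)=B$ by condition (i), so this is an exhaustive $\mathbb{N}$-filtration with $1\in F_0(B)$.

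For (iii), the associated graded ring is $Gr(B):=\bigoplus_{n\ge 0}F_n(B)/F_{n-1}(B)$ with $F_{-1}(B):=0$; since $B=\bigoplus_{n\ge 0}B_n$ as abelian groups, the natural inclusion $B_n\hookrightarrow F_n(B)$ composed with the projection $F_n(B)\to F_n(B)/F_{n-1}(B)$ is an isomorphism of abelian groups (its inverse is induced by the projection of $B$ onto the $n$-th homogeneous summand), giving $Gr(B)_n\cong B_n$. For (iv), the equivalence of (a), (b), (c) is the standard semi-graded analogue of the graded-submodule lemma: (a)$\Rightarrow$(b) because if $z=\sum z_n$ with $z\in N=\bigoplus N_n$ then each $z_n\in N_n\subseteq N$ by directness of the sum; (b)$\Rightarrow$(a) because (b) says $N=\bigoplus_{n}(N\cap M_n)$, which is exactly a semi-graduation of $N$ (that $B_m(N\cap M_n)\subseteq\bigoplus_{k\le m+n}(N\cap M_k)$ follows from the same property for $M$ together with $N$ being a submodule closed under homogeneous components); and (a)$\Leftrightarrow$(c) by defining $(M/N)_n:=(M_n+N)/N$, checking $M/N=\bigoplus_n(M/N)_n$ (directness uses (b): a homogeneous-looking relation $\sum(m_n+N)=N$ forces $\sum m_n\in N$, hence each $m_n\in N$), and verifying the module axiom $B_m(M/N)_n\subseteq\bigoplus_{k\le m+n}(M/N)_k$ by lifting to $M$. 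Finally, (v) is the special case of (iv) applied to the semi-graded $B$-module ${}_BB$ with $N=I$, noting that $B/I$ inherits a multiplication making it an $SG$ ring because $I$ is a two-sided ideal, so $(B/I)_m(B/I)_n\subseteq\bigoplus_{k\le m+n}(B/I)_k$ descends from the corresponding inclusion in $B$.

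The only place that requires a little care — and the step I would treat as the main obstacle, modest as it is — is the directness of the sum in (iv)(c) and (v): one must argue that if $m_0+\cdots+m_t\in N$ with $m_j\in M_j$, then each $m_j\in N$, which is precisely where hypothesis (b) (equivalently (a)) is used, and this is what makes the semi-graduation $(M/N)_n=(M_n+N)/N$ well-defined as a direct sum decomposition rather than merely a spanning family. Everything else is bookkeeping with the filtration inclusions $B_mB_n\subseteq B_0\oplus\cdots\oplus B_{m+n}$ and the definition of $Gr$.
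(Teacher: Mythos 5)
Your verification is correct, and since the paper only quotes this proposition from \cite{lezamalatorre} without reproducing a proof, there is nothing different to compare: your argument is exactly the standard definition-unwinding one (subring and bimodule claims from $B_mB_n\subseteq B_0\oplus\cdots\oplus B_{m+n}$, the filtration and $Gr(B)_n\cong B_n$ from $F_n(B)=F_{n-1}(B)\oplus B_n$, the (a)--(b)--(c) equivalence via homogeneous components, and (v) as the case $M={}_BB$, $N=I$). You also correctly isolate the only delicate point, namely that directness of $(M/N)_n=(M_n+N)/N$ is precisely where condition (b) is needed.
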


\begin{proposition}[\cite{lezamalatorre}]\label{proposition16.5.7}
{\rm (i)} Any $\mathbb{N}$-graded ring is $SG$.

{\rm (ii)} Let $K$ be a field. Any finitely graded $K$-algebra is
a $FSG$ ring.

{\rm (iii)} Any skew $PBW$ extension is a $FSG$ ring.
\end{proposition}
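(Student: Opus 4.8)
The plan is to handle the three statements in order, exhibiting a concrete semi-graduation in each case and then checking the axioms of the definition of a semi-graded ring together with, for (ii) and (iii), the three conditions of Definition \ref{definition17.5.4}. For (i): if $B=\bigoplus_{n\geq 0}B_n$ is $\mathbb{N}$-graded, then the grading axiom $B_mB_n\subseteq B_{m+n}$ at once gives $B_mB_n\subseteq B_0\oplus\cdots\oplus B_{m+n}$, while the conditions $B=\bigoplus_{n\geq 0}B_n$ and $1\in B_0$ are already built into the notion of an $\mathbb{N}$-graded ring; so nothing needs to be done here beyond observing that $SG$ is a weakening of $\mathbb{N}$-graded.

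For (ii): let $A=\bigoplus_{n\geq 0}A_n$ be a finitely graded $K$-algebra, so by (i) it is $SG$ with $A_0=K$. I would first choose finitely many homogeneous algebra generators $x_1,\dots,x_s$ (possible since $A$ is finitely generated over $K$ and graded with $A_0=K$, so any finite generating set may be replaced by the homogeneous components of its members, which then lie in positive degrees); the subring generated by $A_0$ and $x_1,\dots,x_s$ is then all of $A$, which is condition (ii) of Definition \ref{definition17.5.4}. For condition (iii), $A_0=K$ is a field, so each $A_n$ is automatically a free $A_0$-module, and it is finite-dimensional because it is spanned by the finitely many degree-$n$ monomials in $x_1,\dots,x_s$ (equivalently, the case $M=A$ of the dimension fact stated at the start of Subsection \ref{subsection1.1}). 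Hence $A$ is $FSG$.

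For (iii): let $A=\sigma(R)\langle x_1,\dots,x_n\rangle$, and set $A_0:=R$ and, for $p\geq 1$,
\begin{center}
$A_p:=\bigoplus_{|\alpha|=p}Rx^{\alpha}$, \quad $|\alpha|:=\alpha_1+\cdots+\alpha_n$.
\end{center}
Since ${\rm Mon}(A)$ is a left $R$-basis of $A$ (Definition \ref{gpbwextension}(ii)), partitioning it by total degree yields $A=\bigoplus_{p\geq 0}A_p$, and each $A_p$ is $R$-free of finite rank $\binom{p+n-1}{n-1}$; also $1\in R=A_0$, and the subring generated by $A_0=R$ and $x_1,\dots,x_n$ contains every $Rx^{\alpha}$ and hence equals $A$. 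The one remaining point is $A_pA_q\subseteq A_0\oplus\cdots\oplus A_{p+q}$, which I would deduce from the following claim, proved by induction on $k$: any product of $k$ factors, each of which is an element of $R$ or one of the $x_i$, lies in $\bigoplus_{|\delta|\leq e}Rx^{\delta}$, where $e$ is the number of factors of the form $x_i$. The inductive step uses Proposition \ref{sigmadefinition} to move an element of $R$ one place to the left past some $x_i$ (via $x_ir=\sigma_i(r)x_i+\delta_i(r)$, which keeps or lowers the number of $x$-factors) and relation \eqref{equation1.2.1} to straighten an out-of-order pair $x_jx_i$ with $j>i$ into $c_{ij}x_ix_j+\sum_\ell a_{ij}^{(\ell)}x_\ell+d_{ij}$, which replaces two $x$-factors by two $x$-factors plus terms having at most one. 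Applying the claim to $rx^{\alpha}\cdot sx^{\beta}$ with $|\alpha|=p$, $|\beta|=q$ gives $rx^{\alpha}sx^{\beta}\in\bigoplus_{|\delta|\leq p+q}Rx^{\delta}=A_0\oplus\cdots\oplus A_{p+q}$, the desired multiplicativity axiom. Therefore $A$ is $FSG$.

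The hard part is precisely this last verification in (iii): one must be sure that bringing scalars of $R$ and generators $x_i$ into standard position never increases the total degree of a standard monomial. As indicated, this rests on the fact that in the defining relations \eqref{sigmadefinicion1} and \eqref{equation1.2.1} the leading terms $c_{i,r}x_i$ and $c_{ij}x_ix_j$ carry the same degree as $x_ir$ and $x_jx_i$, while every other term produced drops the degree strictly; no rewriting-confluence argument is needed, since the straightened form is just the unique expansion of the given element of $A$ in the $R$-basis ${\rm Mon}(A)$.
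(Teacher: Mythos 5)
Your proposal is correct, and since the paper states Proposition \ref{proposition16.5.7} only as a quotation from \cite{lezamalatorre} without reproducing a proof, there is nothing in the text to diverge from: what you write is the standard argument, (i) being immediate, (ii) using homogeneous generators together with $\dim_K A_n<\infty$, and (iii) endowing $A=\sigma(R)\langle x_1,\dots,x_n\rangle$ with the total-degree semi-graduation $A_0=R$, $A_p=\bigoplus_{|\alpha|=p}Rx^{\alpha}$ and checking $A_pA_q\subseteq A_0\oplus\cdots\oplus A_{p+q}$ from the defining relations \eqref{sigmadefinicion1} and \eqref{equation1.2.1}, whose non-leading terms strictly drop the degree. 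The one point to tighten is the induction in (iii): the measure cannot literally be the number $k$ of factors, since in the inductive step the product $x^{\delta}r$ with $|\delta|=k-1$ is again a product of $k$ factors, so the step as stated is circular; instead induct on the total degree (or a lexicographic measure including the number of inversions), e.g.\ prove jointly by induction on $|\alpha|$ the two sub-lemmas $x^{\alpha}r\in\bigoplus_{|\gamma|\leq|\alpha|}Rx^{\gamma}$ and $x^{\alpha}x_i\in\bigoplus_{|\gamma|\leq|\alpha|+1}Rx^{\gamma}$, using exactly the degree-non-increasing moves you describe. This is a bookkeeping fix, not a gap in the idea, and your closing remark that no confluence argument is needed (the straightened form being the unique expansion in the basis ${\rm Mon}(A)$) is correct.
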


\begin{remark}
(i) In \cite{lezamalatorre} was proved that the previous
inclusions are proper.

(ii) The class of $FSG$ rings also includes properly the multiple
Ore extensions introduced in \cite{XuHuangWang}.
\end{remark}

\subsection{Problem 1}

\begin{itemize}
\item \textit{Investigate the Serre-Artin-Zhang-Verevkin theorem for semi-graded rings}.
\end{itemize}

This problem was partially solved in \cite{lezamalatorre} where it
was assumed that the semi-graded left noetherian ring $B$ is a
domain, however, the Serre-Artin-Zhang-Verevkin theorem for
finitely graded algebras does not include this restriction. Next
we will present the main ingredients of an improved proof where
the domain restriction has been removed.

Let $B=\bigoplus_{n\geq 0}B_n$ be a $SG$ ring that satisfies the
following conditions:

(C1) $B$ is left noetherian.

(C2) $B_0$ is left noetherian.

(C3) For every $n$, $B_n$ is a finitely generated left
$B_0$-module.

(C4) $B_0\subset Z(B)$.

\begin{proposition}[\cite{lezamalatorre}, Proposition 5.2]
Let ${\rm sgr}-B$ be the collection of all finitely generated
semi-graded $B$-modules, then ${\rm sgr}-B$ is an abelian category
where the morphisms are the homogeneous $B$-homomorphisms.
\end{proposition}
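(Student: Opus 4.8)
The goal is to prove that ${\rm sgr}-B$ is an abelian category, assuming (C1)--(C4). The plan is to verify the axioms of an abelian category in turn: that it is additive (finite biproducts, enriched over abelian groups), that every morphism has a kernel and a cokernel, and that every monomorphism is a kernel and every epimorphism is a cokernel. The key observation that makes this run smoothly is that, because of (C4), a semi-graded $B$-module is the same thing as a module equipped with a compatible $\mathbb Z$-grading of its underlying group satisfying the degree-shift condition $B_mM_n\subseteq\bigoplus_{k\le m+n}M_k$, and the homogeneous homomorphisms are the degree-preserving ones; all constructions will be performed on underlying modules and then shown to inherit a semi-graduation.

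First I would check the additive structure: $\operatorname{Hom}_{{\rm sgr}-B}(M,N)$ is a subgroup of $\operatorname{Hom}_B(M,N)$ (a sum of homogeneous maps is homogeneous), composition is bilinear, the zero module is a zero object, and the usual direct sum $M\oplus N$ with $(M\oplus N)_n:=M_n\oplus N_n$ is again semi-graded and finitely generated, and serves as both product and coproduct. Next, for a homogeneous $f:M\to N$, I would take $\ker f$ and ${\rm im}\, f$ as ordinary submodules of $M$ and $N$ respectively; since $f$ is degree-preserving, if $z=\sum z_n\in\ker f$ then each $f(z_n)\in N_n$ and $\sum f(z_n)=0$ forces $f(z_n)=0$, so the homogeneous components of $z$ lie in $\ker f$. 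By Proposition \ref{proposition17.5.5}(iv) this means $\ker f$ is a semi-graded submodule of $M$; likewise ${\rm im}\, f=\bigoplus_n f(M_n)$ is semi-graded in $N$, and by (iv)(c) the cokernel $N/{\rm im}\,f$ is semi-graded. Here is where (C1)--(C3) enter: I must confirm these sub/quotient objects are again \emph{finitely generated} and have finite-dimensional homogeneous pieces over $B_0$ so that they actually lie in ${\rm sgr}-B$ — $\ker f$ is a submodule of the noetherian module $M$ hence finitely generated by (C1), its degree-$n$ piece sits inside $M_n$ which is finitely generated over the noetherian ring $B_0$ by (C2),(C3), hence is itself finitely generated over $B_0$, and quotients present no difficulty. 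The coimage/image comparison then follows because $M/\ker f\to{\rm im}\,f$ is an isomorphism of $B$-modules which is homogeneous in both directions (its inverse preserves degree since $f$ does), so it is an isomorphism in ${\rm sgr}-B$, giving the final abelian-category axiom.

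The step I expect to be the main obstacle — or at least the one requiring the most care — is precisely the bookkeeping that guarantees closure of ${\rm sgr}-B$ under kernels and cokernels: one needs simultaneously that the underlying module is noetherian-finite (so (C1) for $M$, and for $N$ to handle cokernels one uses that $N$ is a quotient of a free module or directly that $N$ is noetherian too), that the $B_0$-module structure on each homogeneous component stays finitely generated (needing (C2) and (C3), plus the remark that $B_0M_n=M_n$), and that the semi-graduation condition (ii) of the module definition is inherited. Once this is in place, the remaining axioms are formal, following the model of the category of graded modules over a graded ring, with Proposition \ref{proposition17.5.5}(iv) doing the work of the usual "homogeneous components of elements of a graded submodule are in the submodule" lemma. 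I would therefore organize the write-up as: (1) additivity, (2) existence of kernels and cokernels inside ${\rm sgr}-B$ (the substantial step), (3) the canonical map ${\rm coim}\to{\rm im}$ is an isomorphism, citing (C1)--(C4) and Proposition \ref{proposition17.5.5} at the appropriate points.
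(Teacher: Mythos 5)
The paper does not prove this proposition (it is quoted from \cite{lezamalatorre}), and your outline is the standard argument and is correct in substance: additivity is routine; kernels and images of homogeneous maps are semi-graded submodules because homogeneous components of their elements stay inside them (Proposition \ref{proposition17.5.5}(iv)), cokernels are then semi-graded by (iv)(c); kernels are finitely generated because $B$ is left noetherian (C1) and $M$ is finitely generated, cokernels are quotients of finitely generated modules; and the canonical map ${\rm coim}\,f\to{\rm im}\,f$ is a homogeneous isomorphism with homogeneous inverse since $f$ maps $M_n$ onto $({\rm im}\,f)_n$.

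Two inaccuracies should be corrected, though neither invalidates the proof. First, the objects of ${\rm sgr}-B$ are only required to be finitely generated and semi-graded; no finiteness of the homogeneous components over $B_0$ is demanded, so the condition you single out as the ``main obstacle'' is not part of membership in ${\rm sgr}-B$ --- and your justification of it is in fact false in general: (C3) concerns the ring $B$, not its modules, and a finitely generated semi-graded module can have components that are not finitely generated over $B_0$. For instance, take $B=K[x]$ with its usual graduation (which satisfies (C1)--(C4)) and $M=B$ with the semi-graduation $M_0:=B$, $M_n:=0$ for $n\geq 1$; this is a finitely generated semi-graded module, yet $M_0$ is infinite-dimensional over $B_0=K$. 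Simply delete that verification. Second, (C4) is not what identifies semi-graded modules with degree-data on the underlying group; the definition of a semi-graded module never uses it. What your argument actually needs from (C1)--(C4) is only (C1), to guarantee that kernels of homogeneous maps between finitely generated modules are again finitely generated.
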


Definition 5.3 in \cite{lezamalatorre} can be improved in the
following way.

\begin{definition}\label{definition16.5.30}
Let $M$ be an object of ${\rm sgr}-B$.
\begin{enumerate}
\item[\rm (i)]For  $s\geq 0$, $B_{\geq s}$ is the
least two-sided ideal of $B$ that satisfies the following
conditions:
\begin{enumerate}
\item[\rm (a)]$B_{\geq s}$ contains $\bigoplus_{p\geq s}B_p$.
\item[\rm (b)]$B_{\geq s}$ is semi-graded as left ideal of
$B$.
\end{enumerate}
\item[\rm (ii)]An element $x\in M$ is torsion if
there exist $s,n\geq 0$ such that $B_{\geq s}^{\ n}x=0$. The set
of torsion elements of $M$ is denoted by $T(M)$. $M$ is
\textit{torsion} if $T(M)=M$ and \textit{torsion-free} if
$T(M)=0$.
\end{enumerate}
\end{definition}

\begin{theorem}[\cite{lezamalatorre}, Theorem 5.5]\label{theorem16.6.5}
The collection ${\rm stor}-B$ of torsion modules forms a Serre
subcategory of ${\rm sgr}-B$, and the quotient category
\begin{center}
${\rm qsgr}-B:={\rm sgr}-B/{\rm stor}-B$
\end{center}
is abelian.
\end{theorem}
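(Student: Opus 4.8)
The plan is to verify the two assertions of the theorem in order: first that $\mathrm{stor}\text{-}B$ is a Serre subcategory of $\mathrm{sgr}\text{-}B$, and second that the Serre quotient $\mathrm{qsgr}\text{-}B$ is abelian. The second assertion is essentially formal once the first is established, since the quotient of an abelian category by a Serre subcategory is always abelian (Gabriel); so I would state this, cite the standard fact, and concentrate the work on the Serre subcategory claim. For that I must show three things: (a) $\mathrm{stor}\text{-}B$ is closed under subobjects and quotients, (b) it is closed under extensions, and (c) every object of $\mathrm{stor}\text{-}B$ genuinely lies in $\mathrm{sgr}\text{-}B$, i.e.\ torsion finitely semi-graded modules are again finitely generated and semi-graded in the category-theoretic sense used here.

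First I would record the basic properties of the ideals $B_{\geq s}$ from Definition \ref{definition16.5.30}: each $B_{\geq s}$ is a two-sided, semi-graded ideal containing $\bigoplus_{p\geq s}B_p$, and $B_{\geq s}\supseteq B_{\geq s'}$ for $s\leq s'$; also $B_{\geq s}B_{\geq t}\subseteq B_{\geq s+t}$ up to the semi-graded-closure correction, which is exactly what makes the exponent bookkeeping in the torsion condition work. Then, closure under subobjects is immediate: if $N\subseteq M$ and $x\in N$, then $x$ is torsion in $M$ hence torsion in $N$. For quotients $M\twoheadrightarrow M/N$: given $\overline{x}\in M/N$, lift to $x\in M$ with $B_{\geq s}^{\,n}x=0$, whence $B_{\geq s}^{\,n}\overline{x}=0$ in $M/N$. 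For extensions, suppose $0\to M'\to M\to M''\to 0$ with $M',M''$ torsion; for $x\in M$ choose $s'',n''$ with $B_{\geq s''}^{\,n''}x\subseteq M'$, then cover the finitely many generators of the finitely generated module $B_{\geq s''}^{\,n''}x\subseteq M'$ by a single pair $(s',n')$ killing them all, and conclude $B_{\geq \max(s',s'')}^{\,n'+n''}x=0$. Here I use condition (C1), left noetherianity of $B$, to guarantee $B_{\geq s''}^{\,n''}x$ is finitely generated, and the fact (needing (C4), $B_0\subseteq Z(B)$, together with (C2), (C3)) that a finite set of torsion elements can be annihilated uniformly.

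The step I expect to be the real obstacle — and the place where removing the domain hypothesis actually costs something — is showing that a torsion module $M$ in $\mathrm{sgr}\text{-}B$ has $T(M)=M$ compatible with being \emph{finitely generated and semi-graded with each graded piece a finite free $B_0$-module}, and dually that for an arbitrary $M\in\mathrm{sgr}\text{-}B$ the submodule $T(M)$ is again an object of $\mathrm{sgr}\text{-}B$ (so that the category makes sense and $\mathrm{stor}\text{-}B$ is a \emph{full} subcategory closed under the relevant operations). In the domain case one had cancellation available to control annihilators; without it one must argue that $T(M)$ is a semi-graded submodule — using Proposition \ref{proposition17.5.5}(iv), it suffices to check that the homogeneous components of a torsion element are torsion, which follows because $B_{\geq s}$ is a semi-graded ideal so $B_{\geq s}^{\,n}x=0$ forces each homogeneous component of the product, hence (after a degree-shift argument) of $x$, to be killed by a suitable $B_{\geq s'}^{\,n'}$ — and then invoke (C1) to get $T(M)$ finitely generated and (C3)+(C4) to get its graded pieces finite free over $B_0$. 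Once $T(M)\in\mathrm{sgr}\text{-}B$ is secured, closure properties (a)–(c) combine to give that $\mathrm{stor}\text{-}B$ is Serre, and the abelianness of $\mathrm{qsgr}\text{-}B$ follows from the general theory of quotient categories, completing the proof.
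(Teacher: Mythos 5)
The paper does not reprove this statement: it is imported verbatim from \cite{lezamalatorre} (Theorem 5.5), so there is no in-paper proof to match against. Your route is the standard one and its core is sound: check that ${\rm stor}-B$ is closed under subobjects, quotients and extensions inside the abelian category ${\rm sgr}-B$, and then invoke Gabriel's theorem that the quotient of an abelian category by a Serre subcategory is abelian. Your closure arguments are fine; for extensions one can even avoid the noetherian bookkeeping on $B_{\geq s''}^{\,n''}x$ by noting that a finitely generated torsion module $M'$ is uniformly annihilated: killing its finitely many generators by $B_{\geq s'}^{\,n'}$ kills all of $M'$ because $B_{\geq s'}$ is two-sided, so $B_{\geq s'}^{\,n'}M'=0$, and then $B_{\geq \max(s',s'')}^{\,n'+n''}x=0$.

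The one place where you go astray is the step you single out as ``the real obstacle.'' Showing that $T(M)$ is itself a semi-graded object of ${\rm sgr}-B$ is not needed for this theorem: a Serre subcategory is just a full subcategory closed under subobjects, quotients and extensions, and the Hom-formula of Remark \ref{remark17.4.6} only quantifies over torsion subobjects $N'\subseteq N$, not over the maximal torsion submodule. Moreover, the argument you sketch for it does not work as stated: in a semi-graded (non-graded) ring the product of homogeneous elements is not homogeneous, only bounded in degree, so $B_{\geq s}^{\,n}x=0$ does not decompose degreewise, and one cannot conclude from it that the homogeneous components of $x$ are torsion by a ``degree-shift argument'' (it is not even clear that $B_{\geq s}^{\,n}$ is semi-graded, since products of semi-graded ideals need not be). Since this claim is superfluous, its failure does not damage your proof; delete item (c) and the remaining two paragraphs already establish the theorem.
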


\begin{remark}\label{remark17.4.6}
Recall from the general theory of abelian categories (see
\cite{Smith}) that the quotient functor
\begin{center}
$\pi:{\rm sgr}-B\to {\rm qsgr}-B$
\end{center}
is exact and defined by
\begin{center}
$\pi(M):=M$ and $\pi(f):=\overline{f}$,
\end{center}
with $M$ and $M\xrightarrow{f}N$ in ${\rm sgr}-B$ and the
morphisms in the category ${\rm qsgr}-B$ are defined by
\begin{equation}\label{equation17.4.1}
Hom_{\rm {qsgr}-B}(M,N):=\underrightarrow{\lim}Hom_{{\rm
sgr}-B}(M', N/N'),
\end{equation}
where the direct limit is taken over all $M'\subseteq M$,
$N'\subseteq N$ in ${\rm sgr}-B$ with $M/M'\in {\rm stor}-B$ and
$N'\in {\rm stor}-B$ (see \cite{Grothendieck}, \cite{Gabriel} and
also \cite{Smith} Proposition 2.13.4).
\end{remark}

\begin{definition}[\cite{lezamalatorre}, Definition 6.1]\label{definition16.5.1}
Let $M$ be a semi-graded $B$-module, $M=\bigoplus_{n\in
\mathbb{Z}} M_n$. Let $i\in \mathbb{Z}$, the semi-graded module
$M(i)$ defined by $M(i)_{n}:=M_{i+n}$ is called a shift of $M$,
i.e.,
\begin{center}
$M(i)=\bigoplus_{n\in \mathbb{Z}}M(i)_n=\bigoplus_{n\in
\mathbb{Z}} M_{i+n}$.
\end{center}
\end{definition}

The next proposition shows that the shift of degrees is an
autoequivalence.

\begin{proposition}[\cite{lezamalatorre}, Proposition 6.3]\label{proposition17.5.3}
Let $s:{\rm sgr}-B\to {\rm sgr}-B$ defined by
\begin{center}
$M\mapsto M(1)$,

$M\xrightarrow{f} N\mapsto M(1)\xrightarrow{f(1)} N(1)$,

$f(1)(m):=f(m)$, $m\in M(1)$.
\end{center}
Then,
\begin{enumerate}
\item[\rm (i)]$s$ is an autoequivalence.
\item[\rm (ii)]For every $d\in \mathbb{Z}$, $s^d(M)=M(d)$.
\item[\rm (iii)]$s$ induces an autoequivalence of ${\rm qsgr}-B$ also denoted by $s$.
\end{enumerate}
\end{proposition}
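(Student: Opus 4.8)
The plan is to verify the three claims in order, modeling the argument on the classical graded case but replacing the grading bookkeeping by the filtration $F_n(B)$ of Proposition~\ref{proposition17.5.5}(ii). For part~(i), I would first check that $s$ is well defined on objects and morphisms: if $M=\bigoplus_{n\in\mathbb{Z}}M_n$ is finitely semi-graded, then $M(1)$ is again semi-graded because condition~(ii) of the module definition is stable under an index shift ($B_mM(1)_n=B_mM_{n+1}\subseteq\bigoplus_{k\le m+n+1}M_k=\bigoplus_{k\le m+n}M(1)_k$), it is finitely generated since the underlying module is unchanged, and each $M(1)_n=M_{n+1}$ is a free $B_0$-module of finite rank. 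For a homogeneous $f\colon M\to N$, the map $f(1)$ sends $M(1)_n=M_{n+1}$ into $N_{n+1}=N(1)_n$, so it is homogeneous; functoriality ($\mathrm{id}(1)=\mathrm{id}$ and $(gf)(1)=g(1)f(1)$) is immediate because on underlying sets nothing changes. Then $s$ is an autoequivalence because it has a two-sided inverse, namely the shift $M\mapsto M(-1)$; one has natural isomorphisms $M(1)(-1)\cong M$ and $M(-1)(1)\cong M$ given by the identity on underlying modules, which is patently natural.

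For part~(ii), I would argue by induction on $|d|$. The case $d=0$ is the identity, $d=1$ is the definition, and for $d\ge 1$ one computes $s^{d+1}(M)=s(s^d(M))=s(M(d))$, whose degree-$n$ component is $M(d)_{n+1}=M_{d+n+1}=M(d+1)_n$; the negative case is symmetric using the inverse shift. One should also note this is an equality of semi-graded modules, not merely an isomorphism, so there is nothing subtle here. For part~(iii), the point is that $s$ descends to the quotient category ${\rm qsgr}-B={\rm sgr}-B/{\rm stor}-B$ of Theorem~\ref{theorem16.6.5}. By the universal property of the Serre quotient (Remark~\ref{remark17.4.6}), it suffices to check that $s$ preserves the Serre subcategory ${\rm stor}-B$, i.e.\ that $M$ is torsion iff $M(1)$ is torsion. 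But this is clear from Definition~\ref{definition16.5.30}(ii): an element $x\in M$ satisfies $B_{\ge s}^{\,n}x=0$ for some $s,n$ as an element of $M$ precisely when it does so as an element of $M(1)$, since the action and the ideals $B_{\ge s}$ are unchanged by the shift — only the indexing of the components of $M$ has moved. Hence $s$ maps ${\rm stor}-B$ into itself (and so does its inverse), and therefore induces an exact autoequivalence $\bar s$ of ${\rm qsgr}-B$ with $\pi\circ s=\bar s\circ\pi$; its inverse is the one induced by $M\mapsto M(-1)$.

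The only place requiring a little care — and the step I expect to be the main obstacle — is confirming that the shift genuinely stays inside the category ${\rm sgr}-B$ rather than some larger category of semi-graded modules; concretely, that $M(1)$ is still \emph{finitely generated} and has components that are finite free over $B_0$. Finite generation is harmless since the shift does not change the underlying $B$-module. For the finite-freeness of each $M(1)_n=M_{n+1}$ one simply reuses the corresponding hypothesis for $M$; there is no shift of the module $B_0$ involved, so the Remark after Definition~\ref{definition17.5.4} (that $B_0M_n=M_n$) continues to hold for $M(1)$. Once this is in place, the verification that $\bar s$ is an autoequivalence of the quotient is formal, because an exact functor on an abelian category that preserves a Serre subcategory and is an equivalence there descends to an equivalence of quotient categories, with the induced inverse coming from the inverse functor downstairs.
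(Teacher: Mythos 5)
Your proposal is correct, and it follows the expected route: checking that the shift preserves the semi-graduation of modules and the homogeneity of morphisms, exhibiting $M\mapsto M(-1)$ as a (strict) inverse so that $s$ is an autoequivalence with $s^d(M)=M(d)$, and observing that torsion is defined element-wise through the ideals $B_{\geq s}$ and hence is untouched by re-indexing, so $s$ preserves ${\rm stor}-B$ and descends to the Serre quotient ${\rm qsgr}-B$; the paper itself gives no proof, citing \cite{lezamalatorre}, Proposition 6.3, and your verification is the standard one compatible with Proposition \ref{proposition17.5.4} ($s\pi=\pi s$). The only unnecessary worry is the finite-freeness of the components $M(1)_n$: as used here, ${\rm sgr}-B$ consists of finitely generated semi-graded modules with homogeneous homomorphisms, so that extra check is moot (and in any case harmless, since the components of $M(1)$ are just those of $M$ re-indexed).
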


\begin{proposition}\label{proposition17.5.4}
$s\pi=\pi s$.
\end{proposition}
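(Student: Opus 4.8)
The plan is to verify the identity of functors $s\pi = \pi s$ directly on objects and on morphisms, using the explicit descriptions of $\pi$ (Remark \ref{remark17.4.6}) and of $s$ (Proposition \ref{proposition17.5.3}). On objects this is immediate: for $M\in {\rm sgr}-B$ one has $(\pi s)(M) = \pi(M(1)) = M(1)$ and $(s\pi)(M) = s(M) = M(1)$, since $\pi$ is the identity on objects and $s$ sends $M$ to its shift $M(1)$; these agree as objects of ${\rm qsgr}-B$. The content of the statement is therefore the agreement on morphisms, i.e.\ that for a homogeneous homomorphism $f\colon M\to N$ in ${\rm sgr}-B$ we have $s(\pi(f)) = \pi(s(f))$ as morphisms $M(1)\to N(1)$ in ${\rm qsgr}-B$.

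First I would recall that $\pi(f) = \overline{f}$ is the image of $f$ under the canonical map $Hom_{{\rm sgr}-B}(M,N)\to Hom_{{\rm qsgr}-B}(M,N)$ coming from the direct limit \eqref{equation17.4.1} (taking $M' = M$, $N' = 0$), and that the induced autoequivalence $s$ on ${\rm qsgr}-B$ of Proposition \ref{proposition17.5.3}(iii) is defined so that on a morphism represented by an honest homomorphism $g\colon M'\to N/N'$ it is represented by the shifted homomorphism $g(1)\colon M'(1)\to (N/N')(1)$. Here one uses the natural identification $(N/N')(1)\cong N(1)/N'(1)$, which holds because shifting commutes with forming quotients of semi-graded modules, and the fact that $N'\in {\rm stor}-B$ iff $N'(1)\in {\rm stor}-B$ (torsion is preserved by shifts, since $B_{\geq s}^{\,n}x = 0$ is unaffected by reindexing degrees), so that the direct system defining $Hom_{{\rm qsgr}-B}(M(1),N(1))$ is the shift of the direct system defining $Hom_{{\rm qsgr}-B}(M,N)$. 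Granting this, both $s(\pi(f))$ and $\pi(s(f))$ are the class in this limit of the homomorphism $f(1)\colon M(1)\to N(1)$: for $\pi(s(f))$ this is the definition of $\pi$ applied to $s(f) = f(1)$; for $s(\pi(f))$ it is the definition of the induced $s$ applied to the class of $f$. Hence the two morphisms coincide.

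To make this rigorous I would organize it as three lemmas-within-the-proof: (1) for any semi-graded submodule $N'\subseteq N$ and any $i\in\mathbb{Z}$, the shift functor induces an isomorphism $(N/N')(i)\xrightarrow{\ \sim\ }N(i)/N'(i)$, natural in $N$; (2) $N'\in{\rm stor}-B$ if and only if $N'(i)\in{\rm stor}-B$, and likewise $M/M'\in{\rm stor}-B$ iff $M(i)/M'(i)\in{\rm stor}-B$, so that $M'\mapsto M'(i)$, $N'\mapsto N'(i)$ is an isomorphism of the index categories occurring in \eqref{equation17.4.1}; (3) under the resulting identification of direct limits, the map $s$ on ${\rm Hom}$-sets of ${\rm qsgr}-B$ is exactly the one induced termwise by $g\mapsto g(1)$. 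With (1)--(3) in hand the equality $s\pi=\pi s$ follows by chasing $f$ through the definitions as above, and one also checks compatibility with composition, which is automatic since every arrow in sight is represented by an actual shifted homomorphism and $(\,\cdot\,)(1)$ is a functor.

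The main obstacle I anticipate is purely bookkeeping rather than conceptual: one must be careful that the ``least two-sided ideal $B_{\geq s}$'' of Definition \ref{definition16.5.30} and the torsion condition it defines genuinely behave well under shifts and quotients, and that the direct-limit description of morphisms in ${\rm qsgr}-B$ in Remark \ref{remark17.4.6} is compatible with the a priori abstract definition of the induced autoequivalence $s$ on the quotient category. Since $\pi$ is exact and $s$ is an exact autoequivalence of ${\rm sgr}-B$ that preserves ${\rm stor}-B$ (by Lemma (2)), the universal property of the Serre quotient guarantees a unique induced functor making the square commute up to natural isomorphism; the real work is to check that this induced functor agrees \emph{on the nose} with the one given by $M\mapsto M(1)$ on the quotient category, which is where the explicit computations (1)--(3) are needed. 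Everything else is routine.
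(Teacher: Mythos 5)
Your proposal is correct and takes essentially the same route as the paper's own proof: both verify the equality directly on objects (where both composites give $M(1)$) and on morphisms, reducing everything to the identity $s(\overline{f})=\overline{f(1)}=\pi(f(1))$. The paper simply asserts this last step via the explicit description of the induced autoequivalence in Proposition \ref{proposition17.5.3}(iii), while you additionally justify it (shifts commute with quotients, preserve and detect torsion, and act termwise on the direct limit \eqref{equation17.4.1}); this is consistent with, and only fills in bookkeeping behind, the paper's argument.
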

\begin{proof}
We have
\begin{center}
${\rm sgr}-B \xrightarrow{\pi}{\rm qsgr}-B \xrightarrow{s} {\rm
qsgr}-B$ and  ${\rm sgr}-B \xrightarrow{s}{\rm sgr}-B
\xrightarrow{\pi} {\rm qsgr}-B$,
\end{center}
so, for $M$ in ${\rm sgr}-B$, $s\pi(M)=s(\pi(M))=\pi(M)(1)=M(1)$
and $\pi s(M)=\pi(M(1))=M(1)$; for $M\xrightarrow{f}N$ in ${\rm
sgr}-B $, $s\pi(f)=s(\overline{f})=\overline{f(1)}$ and $\pi
s(f)=\pi(f(1))=\overline{f(1)}$.
\end{proof}

\begin{definition}
Let $s$ be the autoequivalence of ${\rm qsgr}-B$ defined by the
shifts of degrees. We define
\begin{center}
$\Gamma(\pi(B))_{\geq 0}:=\bigoplus_{d=0}^\infty Hom_{{\rm
qsgr}-B}(\pi(B),s^d(\pi(B)))$.
\end{center}
\end{definition}

The domain condition on $B$ in Lemma 6.10 of \cite{lezamalatorre}
has been removed in the following simplified version.

\begin{lemma}\label{lemma6.11}
Let $B$ be a ring that satisfies {\rm (C1)-(C4)}.
\begin{enumerate}
\item[\rm (i)]$\Gamma(\pi(B))_{\geq 0}$ is a $\mathbb{N}$-graded ring.
\item[\rm (ii)]Let $\underline{B}:=\bigoplus_{d=0}^\infty Hom_{{\rm sgr}-B}(B,s^d(B))$.
Then, $\underline{B}$ is a $\mathbb{N}$-graded ring and there
exists a ring homomorphism $\underline{B}\to \Gamma(\pi(B))_{\geq
0}$.
\item[\rm (iii)]For any object $M$ of ${\rm sgr}-B$
\begin{center}
$\Gamma(M)_{\geq 0}:=\bigoplus_{d=0}^\infty Hom_{{\rm
sgr}-B}(B,s^d(M))$
\end{center}
is a graded $\underline{B}$-module, and
\begin{center}
$\Gamma(\pi(M))_{\geq 0}:=\bigoplus_{d=0}^\infty Hom_{{\rm
qsgr}-B}(\pi(B),s^d(\pi(M)))$
\end{center}
is a graded $\Gamma(\pi(B))_{\geq 0}$-module.
\item[\rm (iv)]$\underline{B}$ has the following properties:
\begin{enumerate}
\item[\rm (a)]$(\underline{B})_0\cong B_0$ and $\underline{B}$ satisfies {\rm (C2)}.
\item[\rm (b)]$\underline{B}$ satisfies {\rm (C3)}. More generally, let $N$ be a finitely generated graded
$\underline{B}$-module, then every homogeneous component of $N$ is
finitely generated over $(\underline{B})_0$.
\item[\rm (c)]$\underline{B}$ satisfies {\rm (C1)}.
\end{enumerate}
\item[\rm (v)]If $\underline{B}$ satisfies $\mathcal{X}_1$, then
\begin{enumerate}
\item[\rm (a)]$\Gamma(\pi(B))_{\geq 0}$ satisfies {\rm (C2)}.
\item[\rm (b)]$\Gamma(\pi(B))_{\geq 0}$ satisfies {\rm (C3)}. More generally, let $N$ be a finitely generated graded
$\Gamma(\pi(B))_{\geq 0}$-module, then every homogeneous component
of $N$ is finitely generated over $(\Gamma(\pi(B))_{\geq 0})_0$.
\item[\rm (c)]$\Gamma(\pi(B))_{\geq 0}$ satisfies {\rm (C1)}.
\end{enumerate}
\end{enumerate}
\end{lemma}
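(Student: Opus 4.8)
The plan is to follow the proof of Lemma~6.10 of \cite{lezamalatorre} and verify that each step survives the removal of the hypothesis that $B$ is a domain; in that proof this hypothesis was used only to guarantee that $B$ and its shifts $B(d)$ are torsion-free, so that various quotients $M/T(M)$ disappear, and I would compensate by carrying the torsion submodules along explicitly. Items (i)--(iii) are purely formal and use none of (C1)--(C4): if $t$ is an autoequivalence of an abelian category and $X$ an object, then $\bigoplus_{d\ge 0}Hom(X,t^{d}X)$ is an $\mathbb{N}$-graded ring for the product sending $(f,g)$ with $\deg f=d$ to $t^{d}(g)\circ f$, associativity and the unit $\mathrm{id}_X$ being immediate from functoriality of $t$; taking $t=s$ with $X=B$, resp.\ $X=\pi(B)$, gives $\underline{B}$, resp.\ $\Gamma(\pi(B))_{\ge 0}$. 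The homomorphism $\underline{B}\to\Gamma(\pi(B))_{\ge 0}$ is obtained by applying $\pi$ to the morphism sets; it is degree-preserving and multiplicative because $\pi$ is a functor and $s\pi=\pi s$ by Proposition~\ref{proposition17.5.4}. The same recipe, with $X$ on the left and $M$ (resp.\ $\pi(M)$) on the right of $Hom$, produces the graded module structures in (iii).

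For (iv)(a)--(b): a morphism $\varphi\in Hom_{{\rm sgr}-B}(B,s^{d}(B))$ is left $B$-linear, hence is right multiplication by $c:=\varphi(1)$, and homogeneity forces $c\in B_d$ with $B_nc\subseteq B_{n+d}$ for all $n$, while conversely each such $c$ defines a morphism. Thus $\varphi\mapsto\varphi(1)$ realizes $(\underline{B})_d$ as a $B_0$-submodule of $B_d$ and, by the computation of composition, $\underline{B}$ as a graded subring of $B$. By (C4) the subring $B_0$ is central, hence commutative, and right multiplication by an element of $B_0$ coincides with left multiplication and so preserves each $B_n$; therefore the embedding is onto in degree $0$, giving $(\underline{B})_0\cong B_0$, whence (C2) for $\underline{B}$ follows from (C2) for $B$. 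For (b), $(\underline{B})_d$ is a $B_0$-submodule of $B_d$, which is finitely generated over the left noetherian ring $B_0$ by (C2)--(C3), hence finitely generated over $(\underline{B})_0$, i.e.\ $\underline{B}$ satisfies (C3). The more general statement is the standard fact that, over an $\mathbb{N}$-graded ring whose degree-zero part is left noetherian and whose graded pieces are finitely generated over it, every homogeneous component of a finitely generated graded module is finitely generated over the degree-zero part: if $N=\sum_i\underline{B}\,n_i$ with $n_i$ homogeneous of degree $d_i$, then $N_p=\sum_i(\underline{B})_{p-d_i}n_i$.

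Part (iv)(c) is the first genuinely nonformal point: upgrading the finiteness just obtained to left noetherianity of $\underline{B}$. I would use the inclusion $\underline{B}\hookrightarrow B$ above, together with the induced embedding of $\underline{B}$ as a graded subring of the associated graded ring $Gr(B)$ (via the isomorphisms $Gr(B)_n\cong B_n$ of Proposition~\ref{proposition17.5.5}(iii)), to reduce an ascending chain of graded left ideals of $\underline{B}$ to an ascending chain of left ideals of $B$, which is stationary by (C1). The delicate step --- exactly where \cite{lezamalatorre} invoked the domain hypothesis --- is that a relation $j=\sum_k b_kj_k$ holding in $B$ with $j,j_k\in\underline{B}$ need not have coefficients $b_k\in\underline{B}$; correcting this by a degree induction inside $Gr(B)$ and a bookkeeping of torsion is where I expect the real work of the lemma to lie.

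For (v) I would reproduce the Artin--Zhang argument by which $\chi_1$ turns a section construction into a noetherian ring. By Remark~\ref{remark17.4.6}, since $B$ is left noetherian the ideals $B_{\ge s}$ of Definition~\ref{definition16.5.30} are cofinal among the submodules $M'\subseteq B$ with $B/M'$ torsion, and $T(B(d))$ is the largest torsion submodule of $B(d)$; hence
\[
\Gamma(\pi(B))_{\ge 0,\,d}\;\cong\;\varinjlim\;Hom_{{\rm sgr}-B}\bigl(B_{\ge s},\,B(d)/T(B(d))\bigr),
\]
the direct limit being taken over $s\ge 0$. The condition $\mathcal{X}_1$ on $\underline{B}$ says precisely that the transition maps of this colimit have kernels and cokernels controlled by $Ext^1$-modules which are finitely generated over $B_0$, so $\Gamma(\pi(B))_{\ge 0}$ differs from $\underline{B}$ only by such finite data. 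From this, as in \cite{lezamalatorre} and \cite{Artin2}: the degree-zero part of $\Gamma(\pi(B))_{\ge 0}$ is again left noetherian, giving (a); each of its homogeneous components, and by the argument of (iv)(b) each homogeneous component of every finitely generated graded module over it, is finitely generated over the degree-zero part, giving (b); and the ascending chain condition transfers from $\underline{B}$, established in (iv)(c), giving (c). Each use of ``$B$ is a domain'' in \cite{lezamalatorre} (which there forces $T(B(d))=0$) is to be replaced by explicit handling of $T(B(d))$ and of $T(M)$ for general $M$ --- this being the second place where I expect genuine work.
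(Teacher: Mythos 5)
Your items (i)--(iii) and (iv)(a),(b) are fine and agree with what the paper does (it defers these to Lemma 6.10 of \cite{lezamalatorre}): the grading of $\underline{B}$ and $\Gamma(\pi(B))_{\geq 0}$ via the autoequivalence $s$, the ring homomorphism $f\mapsto\pi(f)$ using $s\pi=\pi s$, the identification $(\underline{B})_0\cong B_0$ by right multiplication, and the realization of each $(\underline{B})_d$ as a $B_0$-submodule of the finitely generated $B_0$-module $B_d$. But you mislocate where the removal of the domain hypothesis is absorbed: the paper states that the proofs of (i)--(iv) are \emph{exactly} those of \cite{lezamalatorre}, so the new work is concentrated entirely in (v). You instead declare (iv)(c) to be ``where the real work of the lemma lies'' and leave it unresolved: your reduction of graded left ideals of $\underline{B}$ to left ideals of $B$ founders, as you admit, on the fact that a relation $j=\sum_k b_kj_k$ in $B$ need not have coefficients in $\underline{B}$, and the promised ``degree induction inside $Gr(B)$ with bookkeeping of torsion'' is never carried out. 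That is an acknowledged but genuine gap.

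The more serious gap is in (v), the only part the paper actually proves. The decisive step there is a concrete finiteness statement: since $\underline{B}$ satisfies $\mathcal{X}_1$, Proposition 3.1.3(3) of \cite{Artin2} yields that every homogeneous component $\Gamma_d$ of $\Gamma:=\Gamma(\pi(B))_{\geq 0}$, in particular $\Gamma_0$, is a finitely generated $B_0$-module; from this $\Gamma_0$ is a commutative noetherian ring (giving (C2)), (C3) follows by the standard generator argument you also use, and (C1) is obtained because $\Gamma$ is now an honest $\mathbb{N}$-graded ring to which the noetherianity argument of Lemma 6.10(iv) of \cite{lezamalatorre} applies. Your sketch never establishes this finite generation: the assertion that ``$\mathcal{X}_1$ says precisely that the transition maps of the colimit have kernels and cokernels controlled by $Ext^1$-modules finitely generated over $B_0$'' is exactly the content that has to be proved or precisely cited, not a reformulation of the definition. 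Moreover $\mathcal{X}_1$ is a condition on $Ext$ over the graded ring $\underline{B}$, whereas your colimit is computed in ${\rm sgr}-B$ over the ideals $B_{\geq s}$, so a transfer between the two settings is needed; and the cofinality of the family $\{B_{\geq s}\}$ (rather than of the powers $B_{\geq s}^{\,n}$) among submodules with torsion quotient requires justification in the semi-graded situation. Finally, your (v)(c), ``the ascending chain condition transfers from $\underline{B}$,'' is asserted with no mechanism --- noetherianity does not pass along the ring homomorphism $\underline{B}\to\Gamma$ for free; in the paper it is deduced only after (a) and (b), using that $\Gamma$ is $\mathbb{N}$-graded and repeating the argument of Lemma 6.10(iv) of \cite{lezamalatorre}.
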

\begin{proof}
We include only the proof of the part (v) since the proof of the
others are exactly as in \cite{lezamalatorre}.

(v) We set $\Gamma:= \Gamma(\pi(B))_{\geq 0}$. Then,

(a) $\Gamma$ satisfies (C2): From (\ref{equation17.4.1}) we have
$\Gamma_0=Hom_{{\rm qsgr}-B}(\pi(B),\pi(B))=Hom_{{\rm
qsgr}-B}(B,B)=\underrightarrow{\lim}Hom_{{\rm sgr}-B}(I', B/N')$,
where the direct limit is taken over all pairs $(I',N')$ in ${\rm
sgr}-B$, with $I',N'\subseteq B$, $B/I'\in {\rm stor}-B$ and
$N'\in {\rm stor}-B$. Since $\pi$ is a covariant functor, we
obtain a ring homomorphism (taking in particular $I'=B$ and
$N'=0$)
\begin{center}
$(\underline{B})_0=Hom_{{\rm sgr}-B}(B, B)\xrightarrow{\gamma}
Hom_{{\rm qsgr}-B}(B,B)=\Gamma_0$

$\gamma(f):=\pi(f)=\overline{f}$.
\end{center}
Since $B_0\cong (\underline{B})_0$ (isomorphism defined by
$\alpha(x)=\alpha_x,\alpha_x(b):=bx,x\in B_0,b\in B$), then
$\Gamma_0$, $\Gamma$ and $\underline{B}$ are $B_0$-modules.
Actually, they are $B_0$-algebras: We check this for
$\underline{B}$, the proof for $\Gamma(\pi(B))_{\geq 0}$ is
similar, and from this we get also that $\Gamma_0$ is a
$B_0$-algebra. If $f\in Hom_{{\rm sgr}-B}(B,B(n)), g\in Hom_{{\rm
sgr}-B}(B,B(m))$, $x\in B_0$ and $b\in B$, then
\begin{center}
$[x\cdot (f\star g)](b)=x\cdot (s^n(g)\circ f)(b)=xg(n)(f(b))$;

$[f\star(x\cdot g)](b)=[s^n(x\cdot g)\circ f](b)=(x\cdot
g)(n)(f(b))=xg(n)(f(b))$.
\end{center}
Since $B_0$ is noetherian, in order to prove that $\Gamma_0$ is a
noetherian ring, the idea is to show that $\Gamma_0$ is finitely
generated as $B_0$-module, but since $\underline{B}$ satisfies
$\mathcal{X}_1$, this follows from Proposition 3.1.3 (3) in
\cite{Artin2}. Thus, $\Gamma_0$ is a commutative noetherian ring,
and hence, $\Gamma$ satisfies (C2).

(b) $\Gamma$ satisfies (C3): Since $\Gamma$ is graded, $\Gamma_d$
is a $\Gamma_0$-module for every $d$, but by $\gamma$ in (a), the
idea is to prove that $\Gamma_d$ is finitely generated over $B_0$,
but again we apply Proposition 3.1.3 (3) in \cite{Artin2}.

For the second part of (b), let $N$ be a $\Gamma$-module generated
by a finite set of homogeneous elements $x_1,\dots,x_r$, with
$x_i\in N_{d_i}$, $1\leq i\leq r$. Let $x\in N_d$, then there
exist $f_1,\dots,f_r\in \Gamma$ such that $x=f_1\cdot
x_1+\cdots+f_r\cdot x_r$, from this we can assume that $f_i\in
\Gamma_{d-d_i}$, but as was observed before, every
$\Gamma_{d-d_i}$ is finitely generated as $\Gamma_{0}$-module, so
$N_d$ is finitely generated over $\Gamma_{0}$ for every $d$.

(c) $\Gamma$ satisfies (C1): By (iii), $\Gamma$ is not only $SG$
but $\mathbb{N}$-graded. The proof of (C1) is exactly as in the
part (iv) of Lemma 6.10 in \cite{lezamalatorre}.
\end{proof}

\begin{proposition}[\cite{Artin2}, Proposition 2.5]\label{proposition6.12}
Let $S$ be a commutative noetherian ring and $\rho:C\to D$ be a
homomorphism of $\mathbb{N}$-graded left noetherian $S$-algebras.
If the kernel and cokernel of $\rho$ are right bounded, then
$D\underline{\otimes}\,_C\, -$ defines an equivalence of
categories ${\rm qgr}-C\simeq {\rm qgr}-D$, where
$\underline{\otimes}$ denotes the graded tensor product.
\end{proposition}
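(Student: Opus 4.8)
We only sketch the argument (it is Proposition 2.5 of \cite{Artin2}). The plan is to produce the equivalence as an adjoint equivalence obtained by passing the base-change adjunction to the quotient categories. Let $F:=D\underline{\otimes}_C-\colon{\rm gr}-C\to{\rm gr}-D$ be extension of scalars along $\rho$ (viewing $D$ as a $(D,C)$-bimodule through $\rho$, so that $D\underline{\otimes}_C M$ is the ordinary tensor product with its natural grading) and let $G\colon{\rm gr}-D\to{\rm gr}-C$ be restriction of scalars along $\rho$; then $(F,G)$ is an adjoint pair, with $G$ exact and $F$ right exact. First I would fix $n_0\geq 0$ with $(\ker\rho)_i=0=(\operatorname{coker}\rho)_i$ for all $i\geq n_0$; then $\rho$ restricts to an isomorphism $C_{\geq n_0}\xrightarrow{\sim}D_{\geq n_0}$, and $\ker\rho$ and $\operatorname{coker}\rho$ are right bounded, hence --- using that $C$ and $D$ are left noetherian and locally finite over $S$, as in \cite{Artin2} --- finitely generated torsion objects of ${\rm gr}-C$.

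The core of the proof is a torsion-preservation package. I would verify: (a) $G$ sends torsion $D$-modules to torsion $C$-modules, since a finitely generated torsion $D$-module is right bounded and remains right bounded and finitely generated over $C$; (b) $F$ sends torsion to torsion, since $D$ is $\mathbb{N}$-graded and so $D\underline{\otimes}_C M$ is right bounded whenever $M$ is; (c) if $T$ is a finitely generated right bounded right $C$-module and $M$ is an object of ${\rm gr}-C$, then $\operatorname{Tor}^C_i(T,M)$ is torsion for all $i$ --- resolve $M$ by finitely generated graded free left $C$-modules $P_\bullet$, observe that each $T\otimes_C P_j$ is a finite direct sum of shifts of $T$ and hence torsion, and use that subquotients of finitely generated torsion modules are torsion because torsion modules form a Serre subcategory; and (d) $\operatorname{Tor}^C_i(D,T)$ is torsion whenever $T$ is torsion, which follows from (c) by comparing $D$ with $C$ along $\rho$ --- the tails $C/C_{\geq n_0}$ and $D/D_{\geq n_0}$ are torsion and $C_{\geq n_0}\cong D_{\geq n_0}$ as right $C$-modules, so the relevant long exact sequences reduce the claim to (c). With (a)--(d) secured, one passes $F$ and $G$ to the quotient categories, obtaining an adjoint pair $\overline F\colon{\rm qgr}-C\to{\rm qgr}-D$, $\overline G\colon{\rm qgr}-D\to{\rm qgr}-C$ (item (d) is what makes $\overline F$ well defined on morphisms), with unit and counit obtained by applying the quotient functor $\pi$ to the unit $\eta$ and counit $\varepsilon$ of $(F,G)$.

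It then remains to see that $\pi(\eta_M)$ and $\pi(\varepsilon_N)$ are isomorphisms. Applying $-\otimes_C M$ to $C\xrightarrow{\rho}D\to\operatorname{coker}\rho\to0$ identifies $\operatorname{coker}(\eta_M)\cong\operatorname{coker}\rho\otimes_C M$, which is torsion by (c); splitting $\rho$ through $\operatorname{im}\rho$ exhibits $\eta_M$ as $M\twoheadrightarrow\operatorname{im}\rho\otimes_C M\to D\otimes_C M$, where the first arrow has kernel a quotient of $\ker\rho\otimes_C M$ and the second a quotient of $\operatorname{Tor}^C_1(\operatorname{coker}\rho,M)$, both torsion by (c), so $\ker(\eta_M)$ is torsion. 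Dually, the same computation applied to the multiplication map $\mu\colon D\underline{\otimes}_C D\to D$ shows that $\ker\mu$ embeds into $D\underline{\otimes}_C\operatorname{coker}\rho$ and is right bounded; tensoring $0\to\ker\mu\to D\underline{\otimes}_C D\xrightarrow{\mu}D\to0$ (of right $D$-modules) with a finitely generated left $D$-module $N$ and using that $D$ is flat over itself yields $0\to\ker\mu\otimes_D N\to FG(N)\xrightarrow{\varepsilon_N}N\to0$ with $\ker\mu\otimes_D N$ torsion. Hence $(\overline F,\overline G)$ is an adjoint equivalence, and $D\underline{\otimes}_C-$ induces ${\rm qgr}-C\simeq{\rm qgr}-D$.

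I expect the real difficulty to lie not in any single computation but in the cumulative torsion bookkeeping of the second step --- items (a)--(d) and the well-definedness of $\overline F$ --- where one must keep the $\mathbb{N}$-grading of $D$, the identification $C_{\geq n_0}\cong D_{\geq n_0}$, and the local finiteness of $C$ and $D$ over $S$ simultaneously under control; once that is settled, the remainder is a formal manipulation of short exact sequences and of the adjunction, as carried out in \cite{Artin2}.
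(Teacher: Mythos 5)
The first thing to say is that the paper contains no proof of Proposition \ref{proposition6.12}: it is quoted verbatim from \cite{Artin2} (Proposition 2.5) and used as an external input in the proof of Theorem \ref{theorem16.7.13}, so there is no internal argument to compare yours with; I can only measure your sketch against the Artin--Zhang argument it reconstructs. Your route --- descend the adjoint pair $(F,G)=(D\underline{\otimes}_C-,\ \text{restriction along }\rho)$ to the quotient categories and check that the unit and counit become isomorphisms modulo torsion, with the Tor-boundedness lemma (c) securing well-definedness of $\overline F$ and the analysis of $\ker(\eta_M)$, $\operatorname{coker}(\eta_M)$ --- is the expected one, and the skeleton (items (a), (c), (d), the factorization of $\eta_M$ through $\operatorname{im}\rho\otimes_C M$, the split sequence $0\to\ker\mu\to D\underline{\otimes}_C D\to D\to 0$ for the counit) is sound.

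There is, however, one step whose stated justification is wrong, and it is exactly the step where the hypothesis on $\operatorname{coker}\rho$ has to enter. In (b) you claim that $D\underline{\otimes}_C M$ is right bounded whenever $M$ is, ``since $D$ is $\mathbb{N}$-graded''. That is false in general: take $C=K\subseteq D=K[x]$ and $M=K$; then $M$ is right bounded (and torsion, since $C_{\geq 1}=0$), but $D\otimes_C M\cong K[x]$ is not. The correct argument must use that $\rho_n\colon C_n\to D_n$ is surjective for $n\geq n_0$: if $M_j=0$ for $j>N$ and $d\otimes m$ is homogeneous of degree $\geq N+n_0$, then $\deg d\geq n_0$, so $d=\rho(c)$ and $d\otimes m=1\otimes cm=0$; hence $(D\otimes_C M)_n=0$ for $n\geq N+n_0$. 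The same point is hidden in your counit computation, where the right boundedness of $\ker\mu\cong D\underline{\otimes}_C\operatorname{coker}\rho$ needs the identical degree argument, and the step from ``$\ker\mu$ right bounded'' to ``$\ker\mu\otimes_D N$ torsion'' should be phrased through the annihilator ($D_{\geq s}\cdot\ker\mu=0$ for some $s$), because $\ker\mu\otimes_D N$ is in general not right bounded. Finally, for $\overline G$ to land in ${\rm qgr}-D\to{\rm qgr}-C$ between categories of finitely generated objects you need $D$ to be finitely generated as a left $C$-module; this does not follow from the displayed hypotheses alone, and it is where the finiteness assumptions of \cite{Artin2} that you invoke only in passing actually do work. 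All of this is repairable in a few lines, but as written the sketch glosses precisely the places where the boundedness hypothesis on $\ker\rho$ and $\operatorname{coker}\rho$ is genuinely used.
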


The solution of Problem 1 is as follows.

\begin{theorem}\label{theorem16.7.13}
Let $B$ be a $SG$ ring that satisfies {\rm (C1)-(C4)} and assume
that $\underline{B}$ satisfies the condition $\mathcal{X}_1$, then
there exists an equivalence of categories
\begin{equation}\label{equation17.5.2}
{\rm qgr}-\underline{B}\simeq{\rm qgr}-\Gamma(\pi(B))_{\geq 0}.
\end{equation}
\end{theorem}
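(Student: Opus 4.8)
The plan is to build the equivalence (\ref{equation17.5.2}) by factoring the natural ring homomorphism $\underline{B}\to\Gamma(\pi(B))_{\geq 0}$ constructed in Lemma \ref{lemma6.11}(ii) and then invoking Proposition \ref{proposition6.12}. First I would fix notation, writing $\Gamma:=\Gamma(\pi(B))_{\geq 0}$ and recalling from Lemma \ref{lemma6.11}(iv) that $\underline{B}$ is an $\mathbb{N}$-graded ring satisfying (C1), (C2), (C3), with $(\underline{B})_0\cong B_0$; by the hypothesis $\underline{B}$ satisfies $\mathcal{X}_1$, so Lemma \ref{lemma6.11}(v) gives that $\Gamma$ also satisfies (C1)--(C3), and in particular both $\underline{B}$ and $\Gamma$ are left noetherian $\mathbb{N}$-graded algebras over the commutative noetherian ring $S:=B_0$ (using (C4), $B_0\subseteq Z(B)$, together with the $B_0$-algebra structure on $\underline{B}$ and $\Gamma$ established in the proof of Lemma \ref{lemma6.11}(v)). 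Thus the hypotheses of Proposition \ref{proposition6.12} are in place except for the control of the kernel and cokernel of the homomorphism $\rho:\underline{B}\to\Gamma$.

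The heart of the argument is therefore to show that $\ker\rho$ and $\operatorname{coker}\rho$ are right bounded, i.e. vanish in all sufficiently large degrees. For each $d\geq 0$ the degree-$d$ component of $\rho$ is the canonical map $\gamma_d:\operatorname{Hom}_{{\rm sgr}-B}(B,s^d(B))\to\operatorname{Hom}_{{\rm qsgr}-B}(\pi(B),s^d(\pi(B)))$ induced by the quotient functor $\pi$, as in the proof of Lemma \ref{lemma6.11}(v)(a). Using the description (\ref{equation17.4.1}) of the Hom-sets in ${\rm qsgr}-B$ as a direct limit of $\operatorname{Hom}_{{\rm sgr}-B}(I',B(d)/N')$ over pairs $(I',N')$ with $B/I'$ and $N'$ torsion, one sees that the kernel of $\gamma_d$ consists of homomorphisms $f:B\to B(d)$ whose restriction to some $B_{\geq s}^{\,n}$ lands in $T(B(d))$, and the cokernel measures homomorphisms defined only on such a sub-ideal modulo torsion and not extending to all of $B$. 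Both are governed by the $\mathcal{X}_1$ condition on $\underline{B}$: $\mathcal{X}_1$ is precisely the finiteness statement that makes the local cohomology modules $H^0_{\mathfrak{m}}$ and $H^1_{\mathfrak{m}}$ (equivalently $\underline{\operatorname{Ext}}^0$ and $\underline{\operatorname{Ext}}^1$ against $B_0$) right bounded, and the standard argument — identical to the proof of \cite[Theorem 4.5]{Artin2} or \cite[Proposition 3.1.3]{Artin2} that $\Gamma(\pi(B))_{\geq 0}$ differs from $\underline{B}$ only through these two Ext-groups — yields that $\ker\rho=H^0_{\mathfrak{m}}(\underline{B})$ is right bounded and $\operatorname{coker}\rho$ embeds into $H^1_{\mathfrak{m}}(\underline{B})$, hence is right bounded as well. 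This is the step I expect to be the main obstacle: one must carefully translate the semi-graded torsion notion of Definition \ref{definition16.5.30} (with the ideals $B_{\geq s}$ in place of the honest graded pieces) into the filtered/graded language where the Artin--Zhang machinery applies, using Proposition \ref{proposition17.5.5}(iii) that $Gr(B)_n\cong B_n$ and that $\underline{B}$ is genuinely $\mathbb{N}$-graded.

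Once $\ker\rho$ and $\operatorname{coker}\rho$ are shown to be right bounded, Proposition \ref{proposition6.12} applies verbatim with $C=\underline{B}$, $D=\Gamma$, $S=B_0$, and the functor $\Gamma\,\underline{\otimes}_{\underline{B}}-$ furnishes the desired equivalence ${\rm qgr}-\underline{B}\simeq{\rm qgr}-\Gamma(\pi(B))_{\geq 0}$, which is exactly (\ref{equation17.5.2}). I would close by remarking that this, combined with the identification (not needed for the statement itself, but for the intended application) of ${\rm qsgr}-B$ with ${\rm qgr}-\underline{B}$ via the standard filtration of Proposition \ref{proposition17.5.5}(ii), recovers the full semi-graded analogue of the Serre--Artin--Zhang--Verevkin equivalence (\ref{equation1.2}) without the domain hypothesis imposed in \cite{lezamalatorre}.
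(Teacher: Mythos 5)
Your proposal is correct and follows essentially the same route as the paper: you work with the same ring homomorphism $\rho:\underline{B}\to\Gamma(\pi(B))_{\geq 0}$ from Lemma \ref{lemma6.11}, use that lemma to see both are $\mathbb{N}$-graded left noetherian $B_0$-algebras, invoke the $\mathcal{X}_1$ hypothesis via the Artin--Zhang argument (Theorem 4.5 in \cite{Artin2}) to get right-boundedness of $\ker\rho$ and $\operatorname{coker}\rho$, and then apply Proposition \ref{proposition6.12}. Your additional gloss identifying the kernel and cokernel with local-cohomology-type data is a finer description of the same step the paper delegates to the proof of part S10 of Theorem 4.5 in \cite{Artin2}.
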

\begin{proof}
The ring homomorphism
\begin{align}\label{equation6.1}
\underline{B} & \xrightarrow{\rho} \Gamma(\pi(B))_{\geq 0} \\
f_{0}+\cdots +f_{d} & \mapsto \pi(f_{0})+\cdots+\pi(f_{d})\notag
\end{align}
satisfies the conditions of Proposition \ref{proposition6.12},
with $S=B_0$, $C=\underline{B}$ and $D=\Gamma(\pi(B))_{\geq 0}$.
In fact, from the proof of Lemma \ref{lemma6.11} we know that
$\underline{B}$ and $\Gamma(\pi(B))_{\geq 0}$ are
$\mathbb{N}$-graded left noetherian $B_0$-algebras. Since
$\underline{B}$ satisfies the condition $\mathcal{X}_1$, we can
apply the proof of part S10 in Theorem 4.5 in \cite{Artin2} to
conclude that the kernel and cokernel of $\rho$ are right bounded.
\end{proof}

We will see next that our Theorem \ref{theorem16.7.13} extends the
Serre-Artin-Zhang-Verevkin equivalence (\ref{equation1.2}).

\begin{corollary}\label{corollary17.5.14}
Let $B$ be a $SG$ ring that satisfies {\rm (C1)-(C4)}. Then,
\begin{enumerate}
\item[\rm (i)]There is an injective homomorphism of $\mathbb{N}$-graded
$B_0$-algebras $\eta:\underline{B}\to Gr(B)$.
\item[\rm (ii)]If $B_0=K$ is a field and $Gr(B)$ is left noetherian and satisfies $\mathcal{X}_1$,
then $\underline{B}$ satisfies $\mathcal{X}_1$ and the following
equivalences of categories hold:
\begin{equation}\label{equation17.5.4a}
{\rm qgr}-Gr(B)\simeq{\rm qgr}-\underline{B}\simeq{\rm
qgr}-\Gamma(\pi(B))_{\geq 0}.
\end{equation}
\item[\rm (iii)]If $B$ is finitely graded and satisfies $\mathcal{X}_1$, then $\underline{B}\cong
B$ and the Serre-Artin-Zhang-Verevkin equivalence ${\rm
qgr}-B\simeq{\rm qgr}-\Gamma(\pi(B))_{\geq 0}$ holds.
\end{enumerate}
\end{corollary}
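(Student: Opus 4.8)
The plan is to realise $\underline{B}$ inside $Gr(B)$ through a natural evaluation map $\eta$, and then to feed $\eta$ into Proposition \ref{proposition6.12} and Theorem \ref{theorem16.7.13} to obtain the chain of equivalences; part (iii) will be the degenerate situation in which $\eta$ turns out to be bijective.

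For (i): a morphism $f\in\underline{B}_d=Hom_{{\rm sgr}-B}(B,s^d(B))=Hom_{{\rm sgr}-B}(B,B(d))$ is left $B$-linear, hence $f(b)=bf(1)$ for all $b$, and since $1\in B_0$ and $f$ is homogeneous we get $f(1)\in B(d)_0=B_d$. Using the identification $Gr(B)_d\cong B_d$ from Proposition \ref{proposition17.5.5}(iii) I put $\eta(f):=f(1)$ and extend additively to $\eta\colon\underline{B}\to Gr(B)$. It is injective, because $f$ is recovered from $f(1)$. It is multiplicative: for $f\in\underline{B}_d$ and $g\in\underline{B}_e$ the product $f\star g=s^{d}(g)\circ f$ satisfies $(f\star g)(1)=f(1)g(1)$ (the computation in the proof of Lemma \ref{lemma6.11}); since $f(1)\in B_d$ and homogeneity of $g$ forces $B_d\,g(1)\subseteq B_{d+e}$, the product $f(1)g(1)$ already lies in $B_{d+e}$, so it coincides with its own class in $Gr(B)_{d+e}$, which is $\eta(f)\eta(g)$. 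Finally $\eta$ restricts on $\underline{B}_0$ to the canonical isomorphism $\underline{B}_0\cong B_0=Gr(B)_0$ of Lemma \ref{lemma6.11}(iv)(a), so $\eta$ is a homomorphism of $\mathbb{N}$-graded $B_0$-algebras.

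For (ii): I apply Proposition \ref{proposition6.12} to $\rho=\eta$ with $S=B_0=K$. By Lemma \ref{lemma6.11}(iv) the ring $\underline{B}$ is an $\mathbb{N}$-graded left noetherian $K$-algebra, $Gr(B)$ is one by hypothesis, and $\eta$ is a $K$-algebra homomorphism by (i). The kernel of $\eta$ is zero, hence right bounded. The one non-formal point is that the cokernel $Gr(B)/\eta(\underline{B})$ must be shown to be right bounded; this is exactly where the left noetherianity of $Gr(B)$ and the condition $\mathcal{X}_1$ should enter (in the graded case of (iii) it is automatic, since there $\eta$ is an isomorphism), and I would attempt it by adapting the boundedness argument used for part S10 of Theorem 4.5 in \cite{Artin2}. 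Granting it, Proposition \ref{proposition6.12} gives ${\rm qgr}-\underline{B}\simeq{\rm qgr}-Gr(B)$; since this equivalence is induced by the graded tensor functor and $Gr(B)$ satisfies $\mathcal{X}_1$, the condition $\mathcal{X}_1$ passes to $\underline{B}$ (by the results of Artin and Zhang on the behaviour of the $\chi$ conditions, see \cite{Artin2}). Now $B$ satisfies {\rm (C1)-(C4)} and $\underline{B}$ satisfies $\mathcal{X}_1$, so Theorem \ref{theorem16.7.13} gives ${\rm qgr}-\underline{B}\simeq{\rm qgr}-\Gamma(\pi(B))_{\geq 0}$, and concatenating the two equivalences proves (\ref{equation17.5.4a}).

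For (iii): a finitely graded $K$-algebra is $\mathbb{N}$-graded and connected, so its standard filtration (\ref{equ17.5.1}) coincides with the grading and $Gr(B)\cong B$; moreover $B_nB_m\subseteq B_{n+m}$ holds on the nose, so every $c\in B_d$ yields a homogeneous map $b\mapsto bc$, whence $Hom_{{\rm sgr}-B}(B,B(d))=B_d$ for all $d$ and the map $\eta$ of (i) is an isomorphism $\underline{B}\cong B$. Therefore the hypothesis that $B$ satisfies $\mathcal{X}_1$ is the same as the hypothesis that $\underline{B}$ satisfies $\mathcal{X}_1$ (here $B$ is left noetherian, since $\mathcal{X}_i$ is only defined for left noetherian algebras); conditions {\rm (C1)-(C4)} hold automatically for a finitely graded algebra ($B$ left noetherian, $B_0=K$ a field contained in the centre, each $B_n$ finite dimensional); and since $B$ is graded the semi-graded framework reduces to the graded one, so ${\rm qsgr}-B$ and the two descriptions of $\Gamma(\pi(B))_{\geq 0}$ agree with the classical ones. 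Hence Theorem \ref{theorem16.7.13} specialises to ${\rm qgr}-B\simeq{\rm qgr}-\Gamma(\pi(B))_{\geq 0}$, which is precisely the Serre-Artin-Zhang-Verevkin equivalence (\ref{equation1.2}). The main obstacle in the whole argument is the right-boundedness of the cokernel of $\eta$ in (ii): parts (i) and (iii) are largely formal, whereas that boundedness is the only genuinely substantial step and the only place where the hypotheses on $Gr(B)$ are really used.
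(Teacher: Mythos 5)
Your parts (i) and (iii) are essentially the paper's own argument: the evaluation map $\eta$ (and, in the graded case, the resulting isomorphism $\underline{B}\cong B$) is exactly the homomorphism the paper constructs, and your verifications (homogeneity forcing $f(1)\in B_d$, the identity $(f\star g)(1)=f(1)g(1)$, injectivity, and surjectivity when $B$ is graded) coincide with the paper's. (Your aside that (C1)--(C4) are ``automatic'' for finitely graded algebras is not quite right, since left noetherianity is not automatic, but it is harmless because the corollary assumes (C1)--(C4) anyway.)

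The genuine gap is in part (ii), and you acknowledge it yourself: everything is made to rest on the right-boundedness of ${\rm coker}(\eta)$, which you only ``grant''. Your proposed repair --- adapting the boundedness argument of part S10 of Theorem 4.5 in \cite{Artin2}, as in Theorem \ref{theorem16.7.13} --- does not work as stated, because that argument is run under the hypothesis $\mathcal{X}_1$ for the source algebra $\underline{B}$, which is precisely what part (ii) is trying to establish; here the hypothesis available is $\mathcal{X}_1$ for the target $Gr(B)$. In addition, your transfer of $\mathcal{X}_1$ to $\underline{B}$ is routed through the categorical equivalence ${\rm qgr}-\underline{B}\simeq{\rm qgr}-Gr(B)$, on the claim that the $\chi$ conditions pass along such an equivalence; there is no such invariance result to appeal to (the $\chi_i$ are defined via Ext groups in the module category, not in the quotient category), and it is not what Artin--Zhang prove. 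The paper argues in the opposite order: it applies Lemma 8.2 of \cite{Artin2} directly to the ring homomorphism $\eta$ between $\mathbb{N}$-graded left noetherian $K$-algebras with right bounded kernel and cokernel, concluding first that $\underline{B}$ satisfies $\mathcal{X}_1$; then the second equivalence in (\ref{equation17.5.4a}) follows from Theorem \ref{theorem16.7.13}, and the first from Proposition \ref{proposition6.12} applied to $\eta$. So even granting the boundedness of the cokernel (which the paper asserts and you do not prove), you would still need to replace the equivalence-based transfer of $\mathcal{X}_1$ by this homomorphism-level transfer.
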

\begin{proof}
(i) $\eta$ is defined by (see Proposition \ref{proposition17.5.5})
\begin{align*}
\bigoplus_{d=0}^\infty Hom_{{\rm sgr}-B}(B,B(d))=\underline{B} &
\xrightarrow{\eta} Gr(B)= \bigoplus_{d=0}^\infty
Gr(B)_d=\bigoplus_{d=0}^\infty
\frac{B_0\oplus\cdots\oplus B_d}{B_0\oplus\cdots\oplus B_{d-1}}\\
f_0+\cdots+f_d & \mapsto
\overline{f_0(1)}+\cdots+\overline{f_d(1)},
\end{align*}
with $f_i\in Hom_{{\rm sgr}-B}(B,B(i))$, $0\leq i\leq d$. It is
clear thet $\eta$ is additive and $\eta(1)=1$; $\eta$ is
multiplicative:
\begin{center}
$\eta(f_n\star g_m)=\eta(s^n(g_m)\circ
f_n)=\overline{(s^n(g_m)\circ
f_n)(1)}=\overline{s^n(g_m)(f_n(1))}=\overline{g_m(f_n(1))}=
\overline{f_n(1)g_m(1)}=\overline{f_n(1)}\ \overline{
g_m(1)}=\eta(f_n)\eta(g_m)$.
\end{center}
$\eta$ is a $B_0$-homomorphism: Let $x\in B_0$ and $f_d\in
Hom_{{\rm sgr}-B}(B,B(d))$, since $B_0\cong (\underline{B})_0$,
then
\begin{center}
$\eta(x\cdot f_d)=\eta(f_d\circ \alpha_x)=\overline{(f_d\circ
\alpha_x)(1)}=\overline{f_d(\alpha_x(1))}=\overline{f_d(x)}=\overline{x\cdot
f_d(1)}=\overline{x}\cdot \overline{f_d(1)}=x\cdot
\overline{f_d(1)}=x\cdot \eta(f_d)$.
\end{center}

$\eta$ is injective: If
$\overline{f_0(1)}+\cdots+\overline{f_d(1)}=\overline{0}$, then
$\overline{f_k(1)}=0$ for every $0\leq k\leq d$, therefore
$f_k(1)\in (B_0\oplus \cdots \oplus B_{k-1})\cap B_k$  since
$f_k(1)\in B(k)_0=B_k$.

(ii) Since $\underline{B}$ and $Gr(B)$ are $\mathbb{N}$-graded
left noetherian $K$-algebras ($K$ a field) and the kernel and
cokernel of $\eta$ are right bounded, we apply Lemma 8.2 in
\cite{Artin2} to conclude that $\underline{B}$ satisfies
$\mathcal{X}_1$. Thus, from Theorem \ref{theorem16.7.13} we get
the second equivalence of (\ref{equation17.5.4a}). Applying
Proposition \ref{proposition6.12} to $\eta$ we obtain the first
equivalence.

(iii) We define $\theta$ by
\begin{align*}
\bigoplus_{d=0}^\infty Hom_{{\rm sgr}-B}(B,B(d))=\underline{B} &
\xrightarrow{\theta} B=
\bigoplus_{d=0}^\infty B_d\\
f_0+\cdots+f_d & \mapsto f_0(1)+\cdots+f_d(1),
\end{align*}
with $f_i\in Hom_{{\rm sgr}-B}(B,B(i))$, $0\leq i\leq d$. As (i),
we can prove that $\theta$ is an isomorphism of $B_0$-algebras.
Thus, we get the Serre-Artin-Zhang-Verevkin equivalence ${\rm
qgr}-B\simeq{\rm qgr}-\Gamma(\pi(B))_{\geq 0}$.
\end{proof}

\begin{example}\label{example6.14}
In \cite{lezamalatorre} was proved that the following examples of
skew $PBW$ extensions are semi-graded rings (most of them non
$\mathbb{N}$-graded) and satisfy the conditions (C1)-(C4),
moreover, in each case,  $\underline{B}$ satisfies the condition
$\mathcal{X}_1$, therefore, for these algebras Theorem
\ref{theorem16.7.13} is true. In every example $B_0=K$ is a field:
Enveloping algebra of a Lie $K$-algebra $\mathcal{G}$ of dimension
$n$, $\cU(\mathcal{G})$; the quantum algebra
$\mathcal{U}'(so(3,K))$, with $q\in K-\{0\}$; the dispin algebra
$\cU(osp(1,2))$; the Woronowicz algebra
$\cW_{\nu}(\mathfrak{sl}(2,K))$, where $\nu \in K-\{0\}$ is not a
root of unity; eight types of $3$-dimensional skew polynomial
algebras.
\end{example}

\section{Gelfand-Kirillov conjecture}

In this section we will review some aspects of the quantum version
of the Gelfand-Kirillov conjecture and we will formulate a related
problem in the context of the skew $PBW$ extensions. We start
recalling the classical conjecture and some well known cases where
the conjecture has positive answer. In this section $Q(B)$ denotes
the total ring of fraction of an Ore (left and right) domain $B$.

\begin{conjecture}[Gelfand-Kirillov, \cite{GK}]
Let $\mathcal{G}$ be an algebraic Lie algebra of finite dimension
over a field $L$, with $char(L)=0$. Then, there exist integers
$n,k\geq 1$ such that
\begin{equation}\label{GKconjecture}
Q(\mathcal{U}(\mathcal{G}))\cong Q(A_n(L[s_1,\dots,s_k])),
\end{equation}
where $A_n(L[s_1,\dots,s_k])$ is the general Weyl algebra over
$L$.
\end{conjecture}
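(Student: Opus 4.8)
The plan is to realize $Q(\mathcal{U}(\mathcal{G}))$ as a Weyl skew field $D_{n,k}(L):=Q(A_n(L[s_1,\dots,s_k]))$ by producing explicit Heisenberg--Weyl generators inside it, following the canonical strategy of Gelfand and Kirillov. First I would pin down the numerical data the target must satisfy. Since $\mathcal{U}(\mathcal{G})$ is a bijective skew $PBW$ extension $\sigma(L)\langle x_1,\dots,x_m\rangle$ with $m=\dim_L\mathcal{G}$, it is a Noetherian domain, hence by Theorem \ref{1.3.4} a two-sided Ore domain, so $Q(\mathcal{U}(\mathcal{G}))$ exists as a skew field. Comparing Gelfand--Kirillov dimensions forces $2n+k=m$, and one expects $k$ to be the transcendence degree over $L$ of the center $Z(Q(\mathcal{U}(\mathcal{G})))$, which for an algebraic $\mathcal{G}$ should equal the index $\mathrm{ind}(\mathcal{G})=m-\dim(\mathcal{G}\cdot\ell)$ for generic $\ell\in\mathcal{G}^*$. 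So the opening step is to prove $k=\mathrm{ind}(\mathcal{G})$ and that $m-k$ is even (the generic coadjoint orbit being symplectic), thereby fixing the candidate pair $(n,k)$.

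The heart of the construction is an induction on a subnormal series of $\mathcal{G}$. When $\mathcal{G}$ carries a nonzero abelian or Heisenberg-type ideal, I would locate a pair $p,q\in Q(\mathcal{U}(\mathcal{G}))$ with $[p,q]=1$ centralizing a suitable subfield, split off a copy of the first Weyl field $Q(A_1(L))$, and reduce to the enveloping algebra of a smaller Lie algebra (the centralizer of the pair modulo its center). This is precisely the mechanism that succeeds for nilpotent $\mathcal{G}$, through iterated one-dimensional central extensions, and, after realizing $\mathcal{U}(\mathcal{G})$ as an iterated Ore extension, for algebraic solvable $\mathcal{G}$. In the skew $PBW$ language the relations (\ref{equation1.2.1}) are linear, with $c_{ij}=1$ and the bracket landing in $L\oplus Lx_1\oplus\cdots\oplus Lx_m$, so each localization step is a rational change of the generators $x_i$ that triangularizes the structure constants $a_{ij}^{(k)}$ toward the standard symplectic form $[p_i,q_i]=1$.

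The main obstacle is the case where $\mathcal{G}$ has no chain of ideals to induct on, most notably the semisimple and general algebraic cases. Here I would pass to the associated graded, identifying $\mathrm{Gr}(\mathcal{U}(\mathcal{G}))$ with the symmetric algebra $S(\mathcal{G})$ under its Kirillov--Kostant Poisson bracket, and first attack the commutative analogue: that the Poisson field $\mathrm{Frac}(S(\mathcal{G}))$ admits rational Darboux coordinates over $L$, i.e. is a Weyl--Poisson field $L(s_1,\dots,s_k)(p_1,q_1,\dots,p_n,q_n)$ with $\{p_i,q_i\}=1$ and the $s_j$ Poisson-central. This amounts to a rational trivialization of the generic symplectic leaf of the coadjoint action. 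The genuinely hard part is then to lift this graded normal form through the standard filtration (\ref{equ17.5.1}) to the associative skew field $Q(\mathcal{U}(\mathcal{G}))$: the successive lower-order corrections must be shown to terminate inside the filtration, and a priori they need not. Both the rational Darboux step and its non-commutative lift are where the inductive techniques of the solvable case give out.

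Accordingly, I expect the crux to be an invariant-theoretic input controlling the generic coadjoint orbit rationally over $L$, together with a convergence mechanism for the filtered lift; once the Weyl generators $p_i,q_i$ and the central $s_j$ are produced and shown to generate $Q(\mathcal{U}(\mathcal{G}))$, the isomorphism with $D_{n,k}(L)$ follows by matching relations and verifying that the induced map on fraction fields is bijective. The two independent difficulties --- the rational symplectic normalization at the Poisson level, and its lift past the linear part of (\ref{equation1.2.1}) --- are precisely the places where a new idea beyond the classical solvable-case induction is needed.
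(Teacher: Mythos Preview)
The paper does not contain a proof of this statement: it is presented there as the Gelfand--Kirillov \emph{conjecture}, followed only by a survey of special cases (nilpotent, solvable algebraic, $\dim\mathcal{G}\le 8$, etc.) where it has been verified by others, and then by its quantum reformulation as Problem~2. There is therefore no ``paper's own proof'' to compare against.

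Your proposal is not a proof either, and you say so yourself: the two steps you isolate at the end --- rational Darboux normalization of the Poisson field $\mathrm{Frac}(S(\mathcal{G}))$ for a general algebraic $\mathcal{G}$, and the filtered lift of such a normalization to $Q(\mathcal{U}(\mathcal{G}))$ --- are exactly the places where no general argument is known, and you explicitly leave them as ``where a new idea \ldots\ is needed''. That is an honest assessment, but it means what you have written is a strategy outline, not a proof. In fact the situation is worse than a mere gap: counterexamples to the conjecture for certain simple algebraic Lie algebras (types $B_n$ with $n\ge 3$, $D_n$ with $n\ge 4$, $E$, $F_4$) were produced by Premet, so any purported general proof along the lines you sketch must fail somewhere, and the filtered-lift step is the natural candidate for where it breaks. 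If your intent was to indicate how one would \emph{approach} the problem in the cases where it is known, your outline is reasonable and matches the classical inductive reductions; but it cannot be completed to a proof of the statement as written.
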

Recall that $\mathcal{G}$ is \textit{algebraic} if $\mathcal{G}$
is the Lie algebra of a linear affine algebraic group. A group $G$
is \textit{linear affine algebraic} if $G$ is an affine algebraic
variety such that the multiplication and the inversion in $G$ are
morphisms of affine algebraic varieties.

Next we recall some examples of algebraic Lie algebras for which
the classical conjecture (\ref{GKconjecture}) holds.

\begin{example}
(i) The algebra of all $n\times n$ matrices over a field $L$ with
${\rm char}(L)=0$. The same is true for the algebra of matrices of
null trace (\cite{GK}, Lemma 7).

(ii) A finite dimensional nilpotent Lie algebra over a field $L$,
with ${\rm char}(L)=0$. Moreover, in this case,
$Q(Z(\mathcal{U}(\mathcal{G})))\cong
Z(Q(\mathcal{U}(\mathcal{G})))$ (\cite{GK}, Lemma 8).

(iii) A finite dimensional solvable algebraic Lie algebra over the
field $\mathbb{C}$ of complex numbers (see \cite{Joseph}, Theorem
3.2)

(iv) Any algebraic Lie algebra $\mathcal{G}$ over an algebraically
closed field $L$ of characteristic zero, with
$\dim(\mathcal{G})\leq 8$ (see \cite{Alev}).
\end{example}

\begin{remark}
(i) More examples can be found in \cite{Bois}, \cite{Futorny},
\cite{Joseph1} and \cite{Ooms}.

(ii) There are examples of non-algebraic Lie algebras for which
the conjecture is false. However, other examples show that the
conjecture holds for some non-algebraic Lie algebras (see
\cite{GK}, Section 8).
\end{remark}

We are interested in the \textit{quantum version} of the
Gelfand-Kirillov conjecture, i.e., in this case
$\mathcal{U}(\mathcal{G})$ is replaced for a quantum algebra and
the Weyl algebra $A_n(L[s_1,\dots,s_k])$ in (\ref{GKconjecture})
is replaced by a suitable $n$-multiparametric quantum affine space
$K_{\rm \textbf{q}}[x_1,\dots,x_n]$ , as it is shown the following
examples.

\begin{example}[\cite{Cauchon}]
Let $K$ be a field and
$B:=K[x_1][x_2;\sigma_2,\delta_2]\cdots[x_n;\sigma_n,\delta_n]$ be
an iterated skew polynomial ring with some extra adequate
conditions on $\sigma$'s and $\delta$'s. Then there exits
$\textbf{\rm \textbf{q}}:=[q_{i,j}]\in M_n(K)$ with
$q_{ii}=1=q_{ij}q_{ji}$, for every $1\leq i,j\leq n$, such that
$Q(B)\cong Q(K_{\rm \textbf{q}}[x_1,\dots,x_n])$.
\end{example}

\begin{example}[\cite{Alev2}, Theorem 3.5]
Let $A_n^{Q,\Gamma}(K)$ be the multiparameter quantized Weyl
algebra (see \cite{lezamareyes1}); in particular, consider the
case when there exists a parameter $q\in K^*$ that is non root of
unity, such that every parameter in $Q=[q_1,\dots,q_n]$ and
$\Gamma=[\gamma_{ij}]$ is a power of $q$, and in addition,
$q_i\neq 1$, $1\leq i\leq n$. Under these conditions, there exits
$\textbf{\rm \textbf{q}}:=[q_{ij}]\in M_{2n}(K)$ with
$q_{ii}=1=q_{ij}q_{ji}$, and $q_{ij}$ is a power of $q$, $1\leq
i,j\leq n$, such that
\begin{center}
$Q(A_n^{Q,\Gamma}(K))\cong Q(K_{\textbf{q}}[x_1,\dots,x_{2n}])$,

$Z(Q(K_{\textbf{q}}[x_1,\dots,x_{2n}]))=K$.
\end{center}
\end{example}

\begin{example}[\cite{Alev2}, Theorem 2.15]\label{2.5.6}
Let $U_q^{+}(sl_m)$ be the quantum enveloping algebra of the Lie
algebra of strictly superior triangular matrices of size $m\times
m$ over a field $K$.
\begin{enumerate}
\item[\rm (i)]If $m=2n+1$, then
\begin{center}
$Q(U_q^{+}(sl_m))\cong Q({\rm K}_{{\rm q}}[x_1,\dots, x_{2n^2}])$,
\end{center}
where ${\rm K}:=Q(Z(U_q^{+}(sl_m)))$ and ${\rm q}:=[q_{ij}]\in
M_{2n^2}(K)$, with $q_{ii}=1=q_{ij}q_{ji}$, and $q_{ij}$ is a
power of $q$ for every $1\leq i,j\leq 2n^2$.
\item[\rm (ii)]If $m=2n$, then
\begin{center}
$Q(U_q^{+}(sl_m))\cong Q({\rm K}_{{\rm q}}[x_1,\dots,
x_{2n(n-1)}])$,
\end{center}
where ${\rm K}:=Q(Z(U_q^{+}(sl_m)))$ and ${\rm q}:=[q_{ij}]\in
M_{2n(n-1)}(K)$, with $q_{ii}=1=q_{ij}q_{ji}$, and $q_{ij}$ is a
power of $q$ for every $1\leq i,j\leq 2n(n-1)$.
\end{enumerate}
Moreover, in both cases
\begin{center}
$Q(Z(U_q^{+}(sl_m)))\cong Z(Q(U_q^{+}(sl_m)))$.
\end{center}
\end{example}

\subsection{Problem 2}

\begin{itemize}
\item \textit{Study the Gelfand-Kirillov conjecture for bijective skew $PBW$ extensions over Ore
domains. Moreover, investigate for them the isomorphism
$Q(Z(A))\cong Z(Q(A))$}.
\end{itemize}

The ring $Q^{k,n}_{\textbf{q},\sigma}(R)$ of skew quantum
polynomials over $R$, also denoted by
$R_{\textbf{q},\sigma}[x_1^{\pm 1},\dots,x_k^{\pm 1},
x_{k+1},\dots,x_n]$, conforms a particular type of
quasi-commutative bijective skew $PBW$ extension. In
\cite{Lezama-OreGoldie} was proved the following partial solution
of Problem 2.

\begin{theorem}[\cite{Lezama-OreGoldie}, Corollary 5.1]
If $R$ is an Ore domain $($left and right$)$, then
\begin{equation}\label{GKconjectureI}
Q(Q^{k,n}_{\textbf{q},\sigma}(R))\cong
Q(\textbf{Q}_{\textbf{q},\sigma}[x_1,\dots,x_n]),\, \text{with}\,
\textbf{Q}:=Q(R).
\end{equation}
\end{theorem}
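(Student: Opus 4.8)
The plan is to exhibit one division ring that is simultaneously the total ring of fractions of both sides. Write $A_{0}:=R_{\textbf{q},\sigma}[x_{1},\dots,x_{n}]=Q^{0,n}_{\textbf{q},\sigma}(R)$ for the ring of skew quantum polynomials over $R$ with no Laurent variables; this is a quasi-commutative bijective skew $PBW$ extension of $R$, free as a left $R$-module on the monomials $x^{\alpha}$, $\alpha\in\mathbb{N}^{n}$, with relations $x_{i}r=\sigma_{i}(r)x_{i}$ and $x_{i}x_{j}=q_{ij}x_{j}x_{i}$ (the $q_{ij}$ invertible and fixed by the $\sigma$'s). Since $R$ is a left and right Ore domain, Theorem \ref{1.3.4}(ii) (and its right-hand version) gives that $A_{0}$ is a left and right Ore domain, so its total ring of fractions $Q(A_{0})$ exists and is a division ring. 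I would first record the elementary fact that if $C$ is a left and right Ore domain and $C\subseteq T\subseteq Q(C)$ is an intermediate subring, then $T$ is again a left and right Ore domain with $Q(T)=Q(C)$: one has $Q(C)\subseteq Q(T)$ from $C\subseteq T$; conversely every nonzero element of $T$ is invertible in the division ring $Q(C)$, and writing $ba^{-1}\in Q(C)$ for $a,b\in T\setminus\{0\}$ as a left fraction $c^{-1}d$ with $c,d\in C$, $c\neq0$, gives $cb=da$ with $0\neq c\in T$, so $T$ is left (and symmetrically right) Ore and $Q(T)\subseteq Q(C)$.

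Next I would show that $Q^{k,n}_{\textbf{q},\sigma}(R)$ lies between $A_{0}$ and $Q(A_{0})$. The ring $Q^{k,n}_{\textbf{q},\sigma}(R)$ is obtained from $A_{0}$ by adjoining inverses of $x_{1},\dots,x_{k}$; since each $x_{i}$ normalizes $A_{0}$ (because $x_{i}r=\sigma_{i}(r)x_{i}$ with $\sigma_{i}$ bijective and $x_{i}x_{j}=q_{ij}x_{j}x_{i}$ with $q_{ij}$ invertible), these elements generate a two-sided Ore set of nonzero elements of the domain $A_{0}$, and the corresponding localization embeds in $Q(A_{0})$. Thus $A_{0}\subseteq Q^{k,n}_{\textbf{q},\sigma}(R)\subseteq Q(A_{0})$, and the fact above yields $Q(Q^{k,n}_{\textbf{q},\sigma}(R))=Q(A_{0})$.

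It then remains to place $\textbf{Q}_{\textbf{q},\sigma}[x_{1},\dots,x_{n}]$ between $A_{0}$ and $Q(A_{0})$ as well. Each $\sigma_{i}\in\mathrm{Aut}(R)$ extends uniquely to an automorphism of $\textbf{Q}=Q(R)$, so $\textbf{Q}_{\textbf{q},\sigma}[x_{1},\dots,x_{n}]$ is defined. Inside $Q(A_{0})$, let $\textbf{Q}'$ be the division subring generated by $R$ (a copy of $\textbf{Q}$, by uniqueness of the division ring of fractions of $R$) and let $T$ be the subring of $Q(A_{0})$ generated by $\textbf{Q}'$ together with $x_{1},\dots,x_{n}$. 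Using $x_{i}r=\sigma_{i}(r)x_{i}$ and the invertibility of the nonzero elements of $R$ in $Q(A_{0})$, one checks $x_{i}u=\sigma_{i}(u)x_{i}$ for all $u\in\textbf{Q}'$, while $x_{i}x_{j}=q_{ij}x_{j}x_{i}$ is unchanged; hence the defining relations of $\textbf{Q}_{\textbf{q},\sigma}[x_{1},\dots,x_{n}]$ hold in $T$, giving a surjection $\textbf{Q}_{\textbf{q},\sigma}[x_{1},\dots,x_{n}]\to T$. It is injective: $T$ is spanned on the left over $\textbf{Q}'$ by $\{x^{\alpha}\}$, and a relation $\sum_{\alpha}u_{\alpha}x^{\alpha}=0$ with $u_{\alpha}\in\textbf{Q}'$, after left multiplication by a suitable nonzero $r\in R$ clearing the finitely many denominators (possible by the Ore property of $R$ together with finiteness), becomes $\sum_{\alpha}(ru_{\alpha})x^{\alpha}=0$ with coefficients in $R$, forcing all $ru_{\alpha}=0$ and hence all $u_{\alpha}=0$ since $\{x^{\alpha}\}$ is an $R$-basis of $A_{0}$. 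Thus $T\cong\textbf{Q}_{\textbf{q},\sigma}[x_{1},\dots,x_{n}]$, and $A_{0}\subseteq T\subseteq Q(A_{0})$ gives $Q(\textbf{Q}_{\textbf{q},\sigma}[x_{1},\dots,x_{n}])=Q(A_{0})$. Combining the two steps yields the asserted isomorphism $Q(Q^{k,n}_{\textbf{q},\sigma}(R))\cong Q(\textbf{Q}_{\textbf{q},\sigma}[x_{1},\dots,x_{n}])$.

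I expect the main point requiring care to be the identification of the intermediate subring $T$ of $Q(A_{0})$ with $\textbf{Q}_{\textbf{q},\sigma}[x_{1},\dots,x_{n}]$: one must check that the displayed relations are exactly the defining relations of the skew quantum polynomial ring over $\textbf{Q}$ (including compatibility of the constants $q_{ij}$ with the extended automorphisms $\sigma_{i}$) and that $\{x^{\alpha}\}$ stays left-$\textbf{Q}$-independent in $Q(A_{0})$. The second, slightly fussier, point is the verification that adjoining $x_{1}^{-1},\dots,x_{k}^{-1}$ to $A_{0}$ genuinely produces an Ore localization embedded in $Q(A_{0})$; once both are in place, everything else is formal via the elementary lemma on intermediate subrings of a division ring of fractions.
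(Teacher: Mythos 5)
Your argument is correct. Note that the paper you are reading does not actually prove this statement: it quotes it as Corollary 5.1 of the cited Ore--Goldie paper of Acosta, Chaparro, Lezama, Ojeda and Venegas, where the proof runs along the same lines you chose, namely realizing both rings as having the same total division ring of fractions as $A_0:=R_{\textbf{q},\sigma}[x_1,\dots,x_n]$, which is an Ore domain by the Ore theorem for bijective skew $PBW$ extensions (Theorem \ref{1.3.4}(ii)). The only real difference is one of packaging: the standard route verifies that $R\setminus\{0\}$ is an Ore set of $A_0$ and identifies the localization $(R\setminus\{0\})^{-1}A_0$ with $\textbf{Q}_{\textbf{q},\sigma}[x_1,\dots,x_n]$, while you instead embed $\textbf{Q}_{\textbf{q},\sigma}[x_1,\dots,x_n]$ directly into $Q(A_0)$ as the subring generated by the division hull of $R$ and the $x_i$, and then invoke your intermediate-subring lemma ($C\subseteq T\subseteq Q(C)$ forces $Q(T)=Q(C)$). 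These are equivalent in content; your version trades the Ore-set verification for the left-$\textbf{Q}$-independence of the monomials $x^{\alpha}$ in $Q(A_0)$, which you settle correctly by clearing denominators with a common left denominator and using that $\{x^{\alpha}\}$ is an $R$-basis of $A_0$. The two points you flag (that the displayed relations really present $\textbf{Q}_{\textbf{q},\sigma}[x_1,\dots,x_n]$, so the map onto $T$ is well defined, and that inverting $x_1,\dots,x_k$ is an Ore localization of $A_0$ sitting inside $Q(A_0)$) are indeed the only places needing care, and both are standard: the $x_i$ are regular normal elements of the domain $A_0$, and the quantum space over $\textbf{Q}$ is free as a left $\textbf{Q}$-module on the monomials, so the map can be defined on that basis and checked multiplicative against the commutation rules.
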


The precise definition of $R_{\textbf{q},\sigma}[x_1^{\pm
1},\dots,x_k^{\pm 1}, x_{k+1},\dots,x_n]$ and
$\textbf{Q}_{\textbf{q},\sigma}[x_1,\dots,x_n]$ can be found in
\cite{Lezama-OreGoldie}. Observe that the $n$-multiparametric
quantum affine space $K_{\rm \textbf{q}}[x_1,\dots,x_n]$ was
replaced in (\ref{GKconjectureI}) by the $n$-multiparametric skew
quantum space $\textbf{Q}_{\textbf{q},\sigma}[x_1,\dots,x_n]$.

\section{ Serre's problem}

Given a ring, it is an interesting problem to investigate if the
finitely generated projective modules over it are free. This
problem becomes very important after the formulation in 1955 of
the famous Serre's problem about the freeness of finitely
generated projective modules over the polynomial ring
$K[x_1,\dots,x_n]$, $K$ a field (see \cite{Artamonov2},
\cite{Artamonov3}, \cite{Bass1}, \cite{Lam}). The Serre's problem
was solved positively, and independently, by Quillen in USA, and
by Suslin in Leningrad, USSR (St. Petersburg, Russia) in 1976
(\cite{Quillen}, \cite{Suslin1}).

For arbitrary skew $PBW$ extensions the problem has negative
answer, for example, let $A:=R[x;\sigma]$ be the skew polynomial
ring over $R:=K[y]$, where $K$ is a field and $\sigma(y):=y+1$,
from \cite{McConnell}, 12.2.11 we can conclude that there exist
finitely generated projective modules over $A$ that are not free.
However, in \cite{ArtamonovLezamaFajardo} was proved that if $K$
is a field, $A:=K[x_1,\dots,x_n;\sigma]$, $\sigma$ is an
automorphism of $K$ of finite order, $x_ix_j=x_jx_i$ and
$x_ir=\sigma(r)x_i$, for every $r\in K$ and $1\leq i,j\leq n$,
then every finitely generated projective module over $A$ is free.
The proof in \cite{ArtamonovLezamaFajardo} of this theorem
(\textit{Quillen-Suslin theorem}) is not construcitve, i.e., the
proof shows the existence of a basis for every finitely generated
projective module $M$ over $A$, but an explicit basis of $M$ was
not constructed.

\subsection{Problem 3}

\begin{itemize}
\item \textit{Give a matrix-constructive proof of the Quillen-Suslin
theorem for $A:=K[x_1,\dots,x_n;\sigma]$. Include an algorithm
that computes a basis of a given finitely generated projective
$A$-module.}
\end{itemize}

In \cite{FajardoLezama} was solved the previous problem for the
case of only one variable, in this particular situation weaker
hypotheses can be set, namely, $K$ is a division ring, $\sigma$ is
not necessarily of finite order and a non trivial
$\sigma$-derivation $\delta$ of $K$ can be added.

\begin{theorem}[\cite{FajardoLezama}, Theorem 2.1]\label{theorem9.6.1}
Let $K$ be a division ring and $A:=K[x;\sigma,\delta]$, with
$\sigma$ bijective. Then $A$ is a $\mathcal{PF}$-ring, i.e., every
finitely generated projective module over $A$ is free.
\end{theorem}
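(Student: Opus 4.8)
The plan is to prove that $A := K[x;\sigma,\delta]$ over a division ring $K$, with $\sigma$ bijective, is a $\mathcal{PF}$-ring by exhibiting $A$ as a principal left (and right) ideal domain and then invoking the structure theory of finitely generated modules over such rings. First I would check that $A$ is a domain: since $K$ has no zero divisors and $\sigma$ is injective, the leading-term computation for the product of two nonzero skew polynomials shows the leading coefficients multiply as $\sigma^{\deg}(\cdot)\cdot(\cdot)\neq 0$, so $\deg(fg)=\deg f+\deg g$ and $fg\neq 0$. Next I would establish the (left) Euclidean division algorithm: given $f,g\in A$ with $g\neq 0$, because the leading coefficient of $g$ is a unit in the division ring $K$ and $\sigma$ is bijective, one can subtract an appropriate left multiple $c x^{m} g$ from $f$ to strictly reduce the degree, and iterate. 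This yields $f = qg + r$ with $r=0$ or $\deg r<\deg g$. The bijectivity of $\sigma$ is exactly what lets us solve for the correcting coefficient $c$ at each step, which is why it is hypothesized; without it one only gets a right division algorithm on one side.

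From the left Euclidean algorithm it follows immediately that every left ideal of $A$ is principal, i.e. $A$ is a left principal ideal domain; symmetrically (using $\sigma^{-1}$ and the analogous right division) $A$ is a right principal ideal domain. Then I would appeal to the classical fact that over a principal left ideal domain every finitely generated projective left module is free — indeed more strongly, every finitely generated submodule of a free module is free of well-defined rank (this is the noncommutative analogue of the structure theorem; see e.g. Cohn, \emph{Free Rings and Their Relations}, or Jacobson, \emph{The Theory of Rings}). Concretely, a finitely generated projective module $M$ is a direct summand of some $A^{n}$, hence a finitely generated submodule of $A^{n}$, and over a PID (left) such a submodule is free. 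This gives that every finitely generated projective $A$-module is free, which is the assertion.

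The routine but slightly delicate point is verifying the division algorithm termination and the degree bookkeeping in the skew setting; the genuinely essential hypothesis-driven step is that bijectivity of $\sigma$ is used to make the left division work (and, dually, right division), so both one-sided ideal-theoretic conclusions hold and the module-structure theorem applies on the left. I do not expect a serious obstacle: the main work is assembling these standard facts in the noncommutative polynomial setting. One should also note $K\hookrightarrow A$ means $A\neq 0$, so the rank statement is non-vacuous, and that the argument does not require $\sigma$ to have finite order nor $\delta$ to be trivial — in contrast to the multivariable situation of Problem 3 — because a single skew variable already keeps us inside the class of principal ideal domains.
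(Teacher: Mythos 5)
Your proof is correct, but it takes a genuinely different route from the one the paper (following \cite{FajardoLezama}) has in mind. You prove the skew division algorithm, conclude that $A=K[x;\sigma,\delta]$ is a principal left (and, using bijectivity of $\sigma$, right) ideal domain, and then invoke the classical theorem that over a principal left ideal domain every submodule of a free module --- in particular every finitely generated projective module --- is free; together with left noetherianity this settles the statement as it stands. One small bookkeeping remark: with the paper's convention $xr=\sigma(r)x+\delta(r)$ and left coefficients, the division $f=qg+r$ with quotient on the left already works for any injective endomorphism $\sigma$, since one solves $c\,\sigma^{m}(b)=a$ with $b$ the leading coefficient of $g$; it is the opposite-sided division, hence the right ideal theory, that requires $\sigma$ to be onto --- so your attribution of where bijectivity enters is swapped, though this is harmless here since $\sigma$ is assumed bijective and only the left-module conclusion is needed. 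The paper's proof, by contrast, is deliberately matrix-constructive: by Proposition \ref{6.2.4} (from \cite{Gallego}) it suffices, for each idempotent $F\in M_s(A)$, to exhibit an explicit $U\in GL_s(A)$ with $UFU^{-1}={\rm diag}(0,I_r)$, and \cite{FajardoLezama} constructs $U$ directly from the entries of $F$ by the reduction procedures (B1) and (B2), the last $r$ rows of $U$ giving an explicit basis of $\langle F\rangle$; this is what produces the algorithm (implemented in Maple) illustrated after Theorem \ref{theorem9.6.1}, which is precisely the point of Problem 3. So your approach buys brevity and generality through standard PID module theory, while the paper's approach buys effectivity: an algorithmic computation of the change-of-basis matrix and of a basis of the projective module, which the abstract structure-theoretic argument does not directly supply.
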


The proof of the previous theorem given in \cite{FajardoLezama} is
not only matrix-constructive, but also some algorithms that
compute the basis of a given finitely generated projective
$A$-module are exhibited and implemented in \texttt{Maple}. Next
we review the main ingredients of the proof and include an
illustrative example of algorithms.

\begin{proposition}[\cite{Gallego}]\label{6.2.4}
Let $B$ be a ring. $B$ is a $\mathcal{PF}$-ring if and only if for
every $s\geq 1$, given an idempotent matrix $F\in M_s(B)$, there
exists an invertible matrix $U\in GL_s(B)$ such that
\begin{equation}\label{eq6.2.4}
UFU^{-1}=\begin{bmatrix}0 & 0\\
0 & I_r \end{bmatrix},
\end{equation}
where $r=dim(\langle F\rangle)$, $0\leq r\leq s$, and $\langle
F\rangle$ represents the free left $B$-module $M$ generated by the
rows of $F$. Moreover, the final $r$ rows of $U$ form a basis of
$M$.
\end{proposition}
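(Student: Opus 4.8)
The plan is to prove both implications through the standard dictionary between idempotent matrices over $B$ and finitely generated projective direct summands of free left $B$-modules; the one delicate point is keeping the left/right bookkeeping consistent with the convention that all modules are left. Throughout I would take $B^s$ to be the module of row vectors — a left $B$-module on which $M_s(B)$ acts on the right — so that $\langle F\rangle=B^sF$ is the left span of the rows of $F$ and $B^s=B^sF\oplus B^s(I-F)$ (the sum is direct because any $w=vF=v'(I-F)$ satisfies $w=wF=v'(I-F)F=0$). For the implication $(\Rightarrow)$, assume $B$ is a $\mathcal{PF}$-ring and let $F\in M_s(B)$ be idempotent; then $M:=B^sF$ and $N:=B^s(I-F)$ are finitely generated projective, hence free, and I would set $r:=\dim M$, so $\dim N=s-r$. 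Taking a basis $n_1,\dots,n_{s-r}$ of $N$ followed by a basis $m_1,\dots,m_r$ of $M$ yields a basis of $B^s$, so the matrix $U$ whose rows are $n_1,\dots,n_{s-r},m_1,\dots,m_r$ lies in $GL_s(B)$. A short block computation using $n_iF=0$ and $m_jF=m_j$ shows that $UF$ has its first $s-r$ rows zero and its last $r$ rows equal to $m_1,\dots,m_r$, which is exactly $EU$, where $E$ denotes the block idempotent on the right of \eqref{eq6.2.4}; hence $UF=EU$, i.e.\ $UFU^{-1}=E$, and the final $r$ rows of $U$ are precisely the chosen basis of $M$.

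For $(\Leftarrow)$, let $P$ be a finitely generated projective left $B$-module and write $B^s=P'\oplus Q$ with $P'\cong P$; the projection of $B^s$ onto $P'$ is an idempotent endomorphism, hence right multiplication by an idempotent $F\in M_s(B)$ with $P'=B^sF$. Choosing $U\in GL_s(B)$ with $UFU^{-1}=E$ gives $FU^{-1}=U^{-1}E$, so the left $B$-module automorphism of $B^s$ given by right multiplication by $U^{-1}$ maps $B^sF$ into $B^sE$; symmetrically, right multiplication by $U$ maps $B^sE$ into $B^sF$, and since these two maps are mutually inverse they restrict to a left $B$-module isomorphism $B^sF\cong B^sE\cong B^r$. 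Therefore $P$ is free and $B$ is a $\mathcal{PF}$-ring.

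I do not expect a serious obstacle here: the whole content is the correspondence between idempotent matrices and projective summands of $B^s$ together with elementary linear algebra over $B$. The step to treat with care is the handedness — with $B^s$ taken as rows and $M_s(B)$ acting on the right, the change-of-basis matrix $U$ multiplies $F$ on the left, so the conjugate that appears is $UFU^{-1}$ rather than $U^{-1}FU$, and the basis of $M$ ends up in the final $r$ rows of $U$; the opposite convention would transpose the statement. A minor secondary point is the well-definedness of $r=\dim\langle F\rangle$, which relies on the fact that a nonzero $\mathcal{PF}$-ring has invariant basis number.
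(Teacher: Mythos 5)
Your argument is correct and is essentially the standard proof of this proposition (which the paper does not prove itself but quotes from \cite{Gallego}): the dictionary between idempotent matrices and projective direct summands of $B^s$, with row vectors as a left $B$-module and $M_s(B)$ acting on the right, is exactly the mechanism behind the cited result, and your block computation $UF=EU$ together with the mutually inverse right multiplications by $U$ and $U^{-1}$ in the converse is sound. The one caveat is your closing remark: it is not true in general that a nonzero $\mathcal{PF}$-ring has invariant basis number (Leavitt algebras of module type $(1,n)$ have every finitely generated projective free yet satisfy $B\cong B^n$), so the well-definedness of $r=\dim\langle F\rangle$ is really an implicit hypothesis of the statement itself; it is harmless in the paper's application, where $A=K[x;\sigma,\delta]$ over a division ring is a noetherian domain and hence has IBN.
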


The matrix-constructive proof of Theorem \ref{theorem9.6.1}
consists in constructing explicitly the matrix $U$ in Proposition
\ref{6.2.4} from the entries of the given idempotent matrix $F$
(recall that a left module $M$ over a ring $B$ is finitely
generated projective if and only if $M$ is the left $B$-module
generated by the rows of a idempotent matrix over $B$). In
\cite{FajardoLezama} were designed two algorithms for calculating
the basis of a given finitely generated projective module: a
constructive simplified version and a more complete computational
version over a field. The computational version was implemented
using ${\rm Maple}^\circledR$ 2016. For completeness, we include
next the constructive simplified version (see
\cite{FajardoLezama}, Algorithm 1):

\begin{center}{\small
\fbox{\parbox[c]{12cm}{
\begin{center}
{\rm \textbf{Algorithm for the Quillen-Suslin theorem:\\
Constructive version}}
\end{center}
\begin{description}
\item[]{\rm \textbf{INPUT}:} $A:=K[x,\sigma,\delta]$; $F\in M_s(A)$ an idempotent
matrix.
\item[]{\rm \textbf{OUTPUT}:} Matrices $U$, $U^{-1}$ and a
basis $X$ of $\langle F\rangle$, where
\begin{equation}\label{equation14.6.1}
UFU^{-1}=\begin{bmatrix}0 & 0\\
0 & I_r \end{bmatrix} \ \text{and}\ r=dim(\langle F\rangle).
\end{equation}
\item[]{\rm \textbf{INITIALIZATION}:} $F_1:=F$.

{\rm \textbf{FOR}} $k$ from $1$ to $n-1$ {\rm \textbf{DO}}
\begin{enumerate}
\item Follow the reduction procedures
(B1) and (B2) in the proof of Theorem \ref{theorem9.6.1} in order
to compute matrices $U_k'$, $U_k'^{-1}$ and $F_{k+1}$ such that
\begin{equation*}
U_k'F_kU_k'^{-1}=\begin{bmatrix}\alpha_k & 0\\
0 & F_{k+1} \end{bmatrix},\text{ where } \alpha_k\in\{0,1\}.
\end{equation*}
\item $U_{k}:=\begin{bmatrix}I_{k-1} & 0\\
0 & U_{k}'\end{bmatrix}U_{k-1}$; compute $U_k^{-1}$.
\item By permutation matrices modify $U_{n-1}$.
\end{enumerate}
{\rm \textbf{RETURN}} $U:=U_{n-1}$, $U^{-1}$ satisfying
(\ref{equation14.6.1}), and a basis $X$ of $\langle F\rangle$.
\end{description}}}}
\end{center}

\begin{example}(\cite{FajardoLezama}, Example 3.4)
Let $A:=K[x,\sigma,\delta]$, $K:=\mathbb{Q}(t)$,
$\sigma(f(t)):=f(qt)$ and
$\delta(f(t)):=\tfrac{f(qt)-f(t)}{t(q-1)}$, where $q\in
K-\{0,1\}$. We consider the idempotent matrix $F:=[F^{(1)} \
F^{(2)} \ F^{(3)}\ F^{(4)}]\in M_4(A)$, with $F^{(i)}$ the $ith$
column of $F$ and $a\in\mathbb{Q}$, where
\[
F^{(1)}=
\begin{bmatrix}
-t^{2}qx^2\\
\left(-ta+2\,t \right)x-2\,a+2\\
tx+2\\
-1
\end{bmatrix},
\]

\[
F^{(2)}=
\begin{bmatrix}
-2\,tx+2\\
-{t}^{2}q{x}^{2}+ \left( ta-4\,t \right) x+2\,a-1\\
-tx-2\\
tx+2
\end{bmatrix},
\]

\[
F^{(3)}=
\begin{bmatrix}
-tx-2\\
\left( -2\,{t}^{2}qa+3\,{t}^{2}q \right) {x}^{2}+ \left( {a}^{2}t-8\,ta+8\,t \right) x+2\,{a}^{2}-3\,a+1\\
{t}^{2}q{x}^{2}+ \left( -ta+4\,t \right) x-2\,a+2\\
\left( ta-2\,t \right) x+2\,a-2
\end{bmatrix},
\]

\[
F^{(4)}=
\begin{bmatrix}
-{t}^{3}{q}^{3}{x}^{3}+\left( -{q}^{2}{t}^{2}-5\,{t}^{2}q \right) {x}^{2}-5\,tx+2\\
-{t}^{3}{q}^{3}{x}^{3}+\left( -{q}^{2}{t}^{2}-3\,{t}^{2}q \right) {x}^{2}+ \left( -ta+t \right) x-2\,a+2\\
tx+2\\
{t}^{2}q{x}^{2}+2\,tx-1
\end{bmatrix}.
\]
Applying the algorithms we obtain
\[
U =
\begin{bmatrix}
tx+1 & 0 & {t}^{2}q{x}^{2}+2\,tx-1 & {t}^{2}q{x}^{2}+3\,tx\\
1 & -tx-2 & \left( -ta+2\,t \right) x-2\,a+2 & -{t}^{2}q{x}^{2}-2\,tx+2\\
tx-1 & 1 & {t}^{2}q{x}^{2}+a-1 & {t}^{2}q{x}^{2}+2\,tx-1\\
1 & 0 & tx & tx+1
\end{bmatrix},
\]
{\tiny
\[
U^{-1}=
\begin{bmatrix}
tx & -1 & -tx-2 & 0\\
a-1 & -tx+a-1 & -{t}^{2}q{x}^{2}+ \left( ta-4\,t \right) x+2\,a-1
& {t}^{3}{q}^{3}{x}^{3}-
\left( -q+a-4 \right) {t}^{2}q{x}^{2}+ \left( -3\,ta+3\,t \right) x+1\\
-1 & -1 & -tx-2 & {t}^{2}q{x}^{2}+3\,tx\\
0 & 1 & tx+2 & -{t}^{2}q{x}^{2}-2\,tx+1
\end{bmatrix},
\]}
\[
UFU^{-1} =
\begin{bmatrix}
0 & 0 & 0 & 0\\
0 & 0 & 0 & 0\\
0 & 0 & 1 & 0\\
0 & 0 & 0 & 1
\end{bmatrix},
\]

Therefore, $r=2$ and the last two rows of $U$ conform a basis
$X=\{\emph{\textbf{x}}_1,\textbf{\emph{x}}_2\}$, of $\langle
F\rangle$,
\begin{center}
$\emph{\textbf{x}}_1=(tx-1,1,{t}^{2}q{x}^{2}+a-1,{t}^{2}q{x}^{2}+2\,tx-1)$,
$\emph{\textbf{x}}_2=(1,0,tx,tx+1)$.
\end{center}

\end{example}

\begin{remark}
For $n\geq 2$, the commutativity of $K$ in Problem 3 is necessary,
in \cite{Lam} (p. 36) is proved that if $B$ is a division ring,
then $B[x,y]$ is not $\mathcal{PF}$.
\end{remark}

\section{Artin-Schelter open problems}

Artin-Schelter regular algebras (shortly denoted as $AS$) were
introduced by Michael Artin and William Schelter in \cite{Artin1}.
In non-commutative algebraic geometry these algebras play the role
of $K[x_1,\dots,x_n]$ in commutative algebraic geometry, thus, in
particular, the noetherian $AS$ algebras satisfy the condition
$\mathcal{X}$ of Subsection \ref{subsection1.1} (see Theorem 12.6
in \cite{Smith}), and hence, for them the
Serre-Artin-Zhang-Verevkin theorem holds. Nowadays $AS$ algebras
are intensively investigated, in the next subsection we present
the definition and some properties and open problems on these
algebras.

\subsection{Artin-Schelter regular algebras}

\begin{definition}[\cite{Artin1}]\label{definitionB.2.1}
Let $K$ be a field and $A$ be a finitely graded $K$-algebra. It
says that $A$ is an Artin-Schelter regular algebra $(AS)$ if
\begin{enumerate}
\item[\rm (i)]${\rm gld}(A)=d<\infty$.
\item[\rm (ii)]${\rm GKdim}(A)<\infty$.
\item[\rm (iii)]$\underline{Ext}_A^i(_AK,_AA)\cong \begin{cases}
0\ \  if \ i\neq d\\
K(l)_A\ \ if \ i=d
\end{cases}$
\end{enumerate}
for some shift $l\in \mathbb{Z}$.
\end{definition}

\begin{remark}\label{remark17.3.2}
(i) In \cite{Rogalski} is showed that the third condition is
equivalent to
\begin{center}
$\underline{Ext}_A^i(K_A,A_A)\cong \begin{cases}
0\ \  if \ i\neq d\\
_AK(l)\ \ if \ i=d.
\end{cases}$
\end{center}
In addition, since $K\cong A/A_{\geq 1}$ is finitely generated as
left $A$-module, then we can replace
$\underline{Ext}_A^i(_AK,_AA)$ by $Ext_A^i(_AK,_AA)$. The same is
true for $\underline{Ext}_A^i(K_A,A_A)$.

(ii) Some key examples of Artin-Schelter regular algebras are: Let
$A = K[x_1,\dots, x_n]$ be the usual commutative polynomial ring
in $n$ variables over the field $K$, with any weights
$\deg(x_i)=d_i\geq 1$, $1\leq i\leq n$, then $A$ is $AS$; if $A$
is a commutative $AS$ algebra, then $A$ is isomorphic to a
weighted commutative polynomial ring; the Jordan and quantum
planes are $AS$ of dimension $2$; more generally, the
$n$-multiparametric quantum affine space $K_{\rm
\textbf{q}}[x_1,\dots,x_n]$ is $AS$. $AS$ algebras of dimension
$\leq 3$ are completely classified. It is an interesting and
intensively investigated open problem to classify the $AS$
algebras of dimension $4$ or $5$.

(iii) \textit{Artin-Schelter open problems}: In \cite{Artin1} were
formulated the following problems:
\begin{enumerate}
\item Any $AS$ algebra is noetherian?
\item Any $AS$ algebra is a domain?
\end{enumerate}
There are some advances with respect to the previous open
problems: Both questions above have positive answer for any $AS$
algebra of dimension $\leq 3$. In addition, the same is true for
all known concrete examples of $AS$ algebras of higher dimension.
If $A$ is $AS$ noetherian and its dimension is $\leq 4$, then $A$
is a domain.
\end{remark}

\subsection{Semi-graded Artin-Schelter regular algebras}

$AS$ algebras are $\mathbb{N}$-graded and connected, recently
Gaddis in \cite{Gaddis} and \cite{Gaddis2} introduces the
technique of \textit{homogenization} and the notion of
\textit{essentially regular algebras} in order the study of the
Artin-Schelter condition for non $\mathbb{N}$-graded algebras. On
the other hand, with the purpose of giving new examples of $AS$
algebras, in \cite{Lu} and \cite{Lu1} are defined the
\textit{$\mathbb{Z}^s$-graded Artin-Schelter regular algebras} and
there in have been proved some results for the classification of
$AS$ algebras of dimension $5$ with two generators. The
\textit{semi-graded Artin-Schelter regular algebras} that we will
introduce in the present section is a new different approach to
this problem, and extend the classical notion of Artin-Schelter
regular algebra defined originally in \cite{Artin1}. In a
forthcoming paper we will generalize some classical well-known
results on Artin-Schelter regular algebras to the semi-graded
case.

\begin{definition}\label{definition18.5.1}
Let $K$ be a field and $B$ be a $K$-algebra. We say that $B$ is a
left semi-graded Artin-Schelter regular algebra $(\mathcal{SAS})$
if the following conditions hold:
\begin{enumerate}
\item[\rm (i)]$B$ is a $FSG$ ring with semi-graduation $B=\bigoplus_{p\geq 0}B_p$.
\item[\rm (ii)]$B$ is connected, i.e., $B_0=K$.
\item[\rm (iii)]${\rm lgld}(B):=d<\infty$.
\item[\rm (iv)]$Ext_{B}^i(_B(B/B_{\geq 1}),_BB)\cong \begin{cases}
0\ \  if \ i\neq d\\
(B/B_{\geq 1})_B\ \ if \ i=d
\end{cases}$
\end{enumerate}
\end{definition}

\begin{remark}
(i) If $K\cap B_{\geq 1}\neq 0$, then $B/B_{\geq 1}=0$; if $K\cap
B_{\geq 1}=0$ then $B/B_{\geq 1}\cong K$, where this is an
isomorphism of $K$-algebras induced by the canonical projection
$\epsilon:B\to K$ (recall that $B_{\geq 1}$ is semi-graded).
Moreover, $_B(B/B_{\geq 1})\cong _BK$ and $(B/B_{\geq 1})_B\cong
K_B$.

(ii) Our definition extends (except for the ${\rm GKdim}$) the
classical notion of Artin-Schelter algebra since in such case
$B/B_{\geq 1}\cong K$, ${\rm lgld}(B)={\rm pd}(_BK)={\rm
pd}(K_B)={\rm rgld}(B)={\rm gld}(B)$ and
$Ext_{B}^i(K,B)=\underline{Ext}_{B}^i(K,B)$ (see \cite{Rogalski}).
Thus, every Artin-Schelter regular algebra is $\mathcal{SAS}$.
\end{remark}

\subsection{Problem 4}

\begin{itemize}
\item Is any $\mathcal{SAS}$ algebra with finite ${\rm GKdim}$
a left noetherian domain?
\end{itemize}

We will present next some examples of $\mathcal{SAS}$ algebras
that are not Artin-Schelter, in every example, the algebra is a
left-right noetherian domain.

For the matrix representation of homomorphisms of left modules in
this section we will use the left row notation and for right
modules the right column notation. In order to compute free
resolutions we will apply Theorem 19 of \cite{Lezama3} that has
been implemented in the library \texttt{SPBWE.lib} developed in
\texttt{Maple} by W. Fajardo in \cite{Fajardo2} (see also
\cite{Fajardo3}). This library contains the packages
\texttt{SPBWETools}, \texttt{SPBWERings} and \texttt{SPBWEGrobner}
with utilities to define and perform calculations with skew $PBW$
extensions.

\begin{example}\label{example18.5.3}
The Weyl algebra $A_1(K)$ is not Artin-Schelter, but it is a
$\mathcal{SAS}$ algebra. In fact, the canonical semi-graduation of
$A_1(K)$ is
\begin{center}
$A_1(K)=K\oplus\, _K\langle x,y\rangle\oplus\, _K\langle
x^2,xy,y^2\rangle\oplus \cdots $
\end{center}
Recall that ${\rm gld}(A_1(K))\leq 2$ (\cite{McConnell});
moreover, $A_1(K)_{\geq 1}=A_1(K)$, so $A_1(K)/A_1(K)_{\geq 1}=0$.
Hence, the condition (iv) in Definition \ref{definition18.5.1}
trivially holds. This argument can be applied also to the general
Weyl algebra $A_n(K)$ and the generalized Weyl algebra $B_n(K)$.
\end{example}

\begin{example}
The quantum deformation $A_1^q(K)$ of the Weyl algebra is not
Artin-Schelter, but it is a $\mathcal{SAS}$ algebra. Indeed,
recall that $A_1^q(K)$ is the $K$-algebra defined by the relation
$yx=qxy+1$, with $q\in K-\{0\}$ ($A_1^q(K)$ coincides with the
additive analog of the Weyl algebra in two variables, as well as,
with the linear algebra of $q$-differential operators in two
variables). The semi-graduation of $A_1^q(K)$ is as in the
previous example, moreover ${\rm gld}(A_1^q(K))\leq 2$ and
$A_1^q(K)_{\geq 1}=A_1^q(K)$, so $A_1^q(K)/A_1^q(K)_{\geq 1}=0$
and the condition (iv) in Definition \ref{definition18.5.1} holds.
\end{example}

\begin{example}
Consider the algebra $\mathcal{J}_1:=K\langle x,y\rangle/\langle
yx-xy+y^2+1\rangle$, according to Corollary 2.3.14 in
\cite{Gaddis}, ${\rm gld}(\mathcal{J}_1)=2$. The semi-graduation
of $\mathcal{J}_1$ is as in Example \ref{example18.5.3}, so
$(\mathcal{J}_1)_{\geq 1}=\mathcal{J}_1$. Thus, $\mathcal{J}_1$ is
$\mathcal{SAS}$ but is not Artin-Schelter.
\end{example}

\begin{example}\label{example18.4.5}
Now we consider the dispin algebra $\cU(osp(1,2))$, which in this
example we will be denoted simply as $B$ (see Example
\ref{example6.14}). Recall that $B$ is defined by the relations
\begin{center}
$x_1x_2-x_2x_1=x_1,\ \ \ x_3x_1+x_1x_3=x_2,\ \ \
x_2x_3-x_3x_2=x_3$.
\end{center}
Since $B$ is not finitely graded, then $B$ is not Artin-Schelter.
We will show that $B$ is $\mathcal{SAS}$. We know that ${\rm
gld}(B)=3$ (see \cite{lezamareyes1}); the semi-graduation of $B$
is given by
\begin{center}
$B=K\oplus\, _K\langle x_1,x_2,x_3\rangle\oplus\, _K\langle
x_1^2,x_1x_2,x_1x_3,x_2^2,x_2x_3,x_3^2\rangle\oplus \cdots $
\end{center}
Note that $B_{\geq 1}=\oplus_{p\geq 1}B_p$ and this ideal
coincides with the two-sided ideal of $B$ generated by
$x_1,x_2,x_3$; moreover, $_B(B/B_{\geq 1})\cong _BK$, where the
structure of left $B$-module for $K$ is given by the canonical
projection $\epsilon:B\to K$ (the same is true for the right
structure, $(B/B_{\geq 1})_B\cong K_B$). With \texttt{SPBWE} we
get the following free resolution of $_BK$: {\small
\begin{equation*}
0\rightarrow B\xrightarrow{\phi_2=\begin{bmatrix}-x_3 & x_2 & x_1
\end{bmatrix}} B^3 \xrightarrow{\phi_1=\begin{bmatrix}1+x_2 & -x_1 & 0
\\ x_3 & -1 & x_1\\
0 & x_3 & 1-x_2\end{bmatrix}}B^3
\xrightarrow{\phi_0=\begin{bmatrix}x_1\\
x_2\\
x_3\end{bmatrix}}B\xrightarrow{\epsilon}K\to 0.
\end{equation*}}
Now we apply $Hom_B(-,\, _BB)$ and we get the complex of right
$B$-modules {\footnotesize
\begin{equation*}
0\rightarrow Hom_B(K,B)\xrightarrow{\epsilon^*} Hom_B(B,B)
\xrightarrow{\phi_0^*}Hom_{B}(B^3,B)
\xrightarrow{\phi_1^*}Hom_{B}(B^3,B)\xrightarrow{\phi_2^*}Hom_{B}(B,B)\to
0.
\end{equation*}}
Note that $Hom_B(K,B)=0$: In fact, let $\alpha\in Hom_B(K,B)$ and
$\alpha(1):=b\in B$, then
$\alpha(x_11)=\alpha(0)=0=x_1\alpha(1)=x_1b$, so $b=0$ (recall
that $B$ is a domain) and from this $\alpha(k)=0$ for every $k\in
K$, i.e., $\alpha=0$. Moreover, from the isomorphisms of right
$B$-modules $Hom_B(B,B)\cong B$ and $Hom_{B} (B^3, B)\cong B^3$ we
obtain the complex
\begin{equation*}
0\rightarrow B\xrightarrow{\phi_0^*=\begin{bmatrix}x_1
\\ x_2\\
x_3\end{bmatrix}}B^3 \xrightarrow{\phi_1^*=\begin{bmatrix}1+x_2 &
-x_1 & 0\\
x_3 & -1 & x_1\\
0 & x_3 & 1-x_2
\end{bmatrix}}B^3\xrightarrow{\phi_2^*=\begin{bmatrix}-x_3 & x_2 & x_1\end{bmatrix}}B\to
0.
\end{equation*}
So, $Ext_{B}^0(K,B)=Hom_B(K,B)=0$,
$Ext_{B}^1(K,B)=0=Ext_{B}^2(K,B)$ and
$Ext_{B}^3(K,B)=B/Im(\phi_2^*)=B/B_{\geq 1}\cong K_{B}$. This
shows that $B$ is $\mathcal{SAS}$.
\end{example}

\begin{example}
The next examples are similar to the previous, in every case ${\rm
gld}(B)=3$ and $B$ is $\mathcal{SAS}$. The free resolutions have
been computed with \texttt{SPBWE}.

(a) Consider the universal enveloping algebra of the Lie algebra
$\mathfrak{sl}(2,K)$, $B:=\mathcal{U}(\mathfrak{sl}(2,K))$. $B$ is
the $K$-algebra generated by the variables $x,y,z$ subject to the
relations
\begin{equation*}
[x,y]=z, \ \ \ [x,z]=-2x,\ \ \ [y,z]=2y.
\end{equation*}
The free resolutions are: {\small
\begin{equation*}
0\rightarrow B\xrightarrow{\phi_2=\begin{bmatrix}-z & y & -x
\end{bmatrix}} B^3 \xrightarrow{\phi_1=\begin{bmatrix}y & -x & 1
\\ z-2 & 0 & -x\\
0 & z+2 & -y\end{bmatrix}}B^3
\xrightarrow{\phi_0=\begin{bmatrix}x\\
y\\
z\end{bmatrix}}B\xrightarrow{\epsilon}K\to 0.
\end{equation*}}
\begin{equation*}
0\rightarrow B\xrightarrow{\phi_0^*=\begin{bmatrix}x
\\ y\\
z\end{bmatrix}}B^3 \xrightarrow{\phi_1^*=\begin{bmatrix}y & -x & 1
\\ z-2 & 0 & -x\\
0 & z+2 &
-y\end{bmatrix}}B^3\xrightarrow{\phi_2^*=\begin{bmatrix}-z & y &
-x
\end{bmatrix}}B\to
0.
\end{equation*}
(b) Now let $B:=\mathcal{U}(\mathfrak{so}(3,K))$ be the
$K$-algebra generated by the variables $x,y,z$ subject to the
relations
\[
[x,y]=z,\ \ \ \ [x,z]=-y,\ \ \ \ \ [y,z]=x.
\]
In this case we have {\small
\begin{equation*}
0\rightarrow B\xrightarrow{\phi_2=\begin{bmatrix}-z & y & -x
\end{bmatrix}} B^3 \xrightarrow{\phi_1=\begin{bmatrix}y & -x & 1
\\ z & -1 & -x\\
1 & z & -y\end{bmatrix}}B^3
\xrightarrow{\phi_0=\begin{bmatrix}x\\
y\\
z\end{bmatrix}}B\xrightarrow{\epsilon}K\to 0,
\end{equation*}}

\begin{equation*}
0\rightarrow B\xrightarrow{\phi_0^*=\begin{bmatrix}x
\\ y\\
z\end{bmatrix}}B^3 \xrightarrow{\phi_1^*=\begin{bmatrix}y & -x & 1
\\ z & -1 & -x\\
1 & z & -y\end{bmatrix}}B^3\xrightarrow{\phi_2^*=\begin{bmatrix}-z
& y & -x
\end{bmatrix}}B\to
0.
\end{equation*}
(c) Quantum algebra $B:=\mathcal{U}'(so(3,K))$, with $q\in
K-\{0\}$:
\begin{center}
$x_2x_1-qx_1x_2=-q^{1/2}x_3,\ \ \
x_3x_1-q^{-1}x_1x_3=q^{-1/2}x_2,\ \ \ x_3x_2-qx_2x_3=-q^{1/2}x_1$.
\end{center}
In this case the free resolutions are: {\small
\begin{equation*}
0\rightarrow B\xrightarrow{\phi_2=\begin{bmatrix}-x_3 & x_2 & -x_1
\end{bmatrix}} B^3 \xrightarrow{\phi_1=\begin{bmatrix}x_2 & -qx_1 &
q^{1/2}
\\ qx_3 & -q^{1/2} & -x_1\\
q^{1/2} & x_3 & -qx_2\end{bmatrix}}B^3
\xrightarrow{\phi_0=\begin{bmatrix}x_1\\
x_2\\
x_3\end{bmatrix}}B\xrightarrow{\epsilon}K\to 0,
\end{equation*}}

\begin{equation*}
0\rightarrow B\xrightarrow{\phi_0^*=\begin{bmatrix}x_1
\\ x_2\\
x_3\end{bmatrix}}B^3 \xrightarrow{\phi_1^*=\begin{bmatrix}x_2 &
-qx_1 & q^{1/2}
\\ qx_3 & -q^{1/2} & -x_1\\
q^{1/2} & x_3 &
-qx_2\end{bmatrix}}B^3\xrightarrow{\phi_2^*=\begin{bmatrix}-x_3 &
x_2 & -x_1
\end{bmatrix}}B\to
0.
\end{equation*}

 (d) Woronowicz algebra
$B:=\cW_{\nu}(\mathfrak{sl}(2,K))$, where $\nu \in K-\{0\}$ is not
a root of unity:
\begin{center}
$x_1x_3-\nu^4x_3x_1=(1+\nu^2)x_1,\ \ \ x_1x_2-\nu^2x_2x_1=\nu
x_3,\ \ \ x_3x_2-\nu^4x_2x_3=(1+\nu^2)x_2$.
\end{center}
The free resolutions are: {\tiny
\begin{equation*}
0\rightarrow B\xrightarrow{\phi_2=\begin{bmatrix}-\nu^4x_3 &
\nu^6x_2 & -x_1
\end{bmatrix}} B^3 \xrightarrow{\phi_1=\begin{bmatrix}\nu^2x_2 & -x_1 &
\nu
\\ \nu^4x_3+(\nu^2+1) & 0 & -x_1\\
0 & x_3-(\nu^2+1) & -\nu^4x_2\end{bmatrix}}B^3
\xrightarrow{\phi_0=\begin{bmatrix}x_1\\
x_2\\
x_3\end{bmatrix}}B\xrightarrow{\epsilon}K\to 0,
\end{equation*}}
{\tiny
\begin{equation*}
0\rightarrow B\xrightarrow{\phi_0^*=\begin{bmatrix}x_1
\\ x_2\\
x_3\end{bmatrix}}B^3 \xrightarrow{\phi_1^*=\begin{bmatrix}\nu^2x_2
& -x_1 & \nu
\\ \nu^4x_3+(\nu^2+1) & 0 & -x_1\\
0 & x_3-(\nu^2+1) &
-\nu^4x_2\end{bmatrix}}B^3\xrightarrow{\phi_2^*=\begin{bmatrix}-\nu^4x_3
& \nu^6x_2 & -x_1
\end{bmatrix}}B\to
0.
\end{equation*}}
\end{example}

In the following example we study the semi-graded Artin-Schelter
condition for the eight types of $3$-dimensional skew polynomial
algebras considered in Example \ref{example6.14}, five of them are
$\mathcal{SAS}$ and the other three are not.

\begin{example}
The first type coincides with the dispin algebra taking
$\beta=-1$, and the fifth type corresponds to
$\mathcal{U}(\mathfrak{so}(3,K))$, thus they are $\mathcal{SAS}$.
For the other six types we compute next the free resolutions with
\texttt{SPBWE}:
\begin{center}
$x_2x_3-x_3x_2=0,\ \ x_3x_1-\beta x_1x_3=x_2,\ \ x_1x_2-x_2x_1=0$
($\mathcal{SAS}$):

{\small
\begin{equation*}
0\rightarrow B\xrightarrow{\phi_2=\begin{bmatrix}-x_3 & x_2 &
-\beta x_1
\end{bmatrix}} B^3 \xrightarrow{\phi_1=\begin{bmatrix}x_2 & -x_1 & 0
\\ x_3 & -1 & -\beta x_1\\
0 & x_3 & -x_2\end{bmatrix}}B^3
\xrightarrow{\phi_0=\begin{bmatrix}x_1\\
x_2\\
x_3\end{bmatrix}}B\xrightarrow{\epsilon}K\to 0,
\end{equation*}}

\begin{equation*}
0\rightarrow B\xrightarrow{\phi_0^*=\begin{bmatrix}x_1
\\ x_2\\
x_3\end{bmatrix}}B^3 \xrightarrow{\phi_1^*=\begin{bmatrix}x_2 &
-x_1 & 0
\\ x_3 & -1 & -\beta x_1\\
0 & x_3 &
-x_2\end{bmatrix}}B^3\xrightarrow{\phi_2^*=\begin{bmatrix}-x_3 &
x_2 & -\beta x_1\end{bmatrix}}B\to 0.
\end{equation*}

$x_2x_3-x_3x_2=x_3,\ \ x_3x_1-\beta x_1x_3=0,\ \
x_1x_2-x_2x_1=x_1$ ($\mathcal{SAS}$):

{\footnotesize
\begin{equation*}
0\rightarrow B\xrightarrow{\phi_2=\begin{bmatrix}-x_3 & x_2 &
-\beta x_1
\end{bmatrix}} B^3 \xrightarrow{\phi_1=\begin{bmatrix}x_2+1 & -x_1 & 0
\\ x_3 & 0 & -\beta x_1\\
0 & x_3 & -x_2+1\end{bmatrix}}B^3
\xrightarrow{\phi_0=\begin{bmatrix}x_1\\
x_2\\
x_3\end{bmatrix}}B\xrightarrow{\epsilon}K\to 0,
\end{equation*}}

{\footnotesize
\begin{equation*}
0\rightarrow B\xrightarrow{\phi_0^*=\begin{bmatrix}x_1
\\ x_2\\
x_3\end{bmatrix}}B^3 \xrightarrow{\phi_1^*=\begin{bmatrix}x_2+1 &
-x_1 & 0
\\ x_3 & 0 & -\beta x_1\\
0 & x_3 &
-x_2+1\end{bmatrix}}B^3\xrightarrow{\phi_2^*=\begin{bmatrix}-x_3 &
x_2 & -\beta x_1\end{bmatrix}}B\to 0.
\end{equation*}}

$x_2x_3-x_3x_2=x_3,\ \ x_3x_1-\beta x_1x_3=0,\ \ x_1x_2-x_2x_1=0$
(not $\mathcal{SAS}$) :

{\footnotesize
\begin{equation*}
0\rightarrow B\xrightarrow{\phi_2=\begin{bmatrix}-x_3 & x_2-1 &
-\beta x_1
\end{bmatrix}} B^3 \xrightarrow{\phi_1=\begin{bmatrix}x_2 & -x_1 & 0
\\ x_3 & 0 & -\beta x_1\\
0 & x_3 & -x_2+1\end{bmatrix}}B^3
\xrightarrow{\phi_0=\begin{bmatrix}x_1\\
x_2\\
x_3\end{bmatrix}}B\xrightarrow{\epsilon}K\to 0,
\end{equation*}}

{\footnotesize
\begin{equation*}
0\rightarrow B\xrightarrow{\phi_0^*=\begin{bmatrix}x_1
\\ x_2\\
x_3\end{bmatrix}}B^3 \xrightarrow{\phi_1^*=\begin{bmatrix}x_2 &
-x_1 & 0
\\ x_3 & 0 & -\beta x_1\\
0 & x_3 &
-x_2+1\end{bmatrix}}B^3\xrightarrow{\phi_2^*=\begin{bmatrix}-x_3 &
x_2-1 & -\beta x_1\end{bmatrix}}B\to 0.
\end{equation*}}

$x_2x_3-x_3x_2=0,\ \ x_3x_1-x_1x_3=0,\ \ x_1x_2-x_2x_1=x_3$
($\mathcal{SAS}$):

{\footnotesize
\begin{equation*}
0\rightarrow B\xrightarrow{\phi_2=\begin{bmatrix}-x_3 & x_2 & -x_1
\end{bmatrix}} B^3 \xrightarrow{\phi_1=\begin{bmatrix}x_2 & -x_1 &
1
\\ x_3 & 0 & -x_1\\
0 & x_3 & -x_2\end{bmatrix}}B^3
\xrightarrow{\phi_0=\begin{bmatrix}x_1\\
x_2\\
x_3\end{bmatrix}}B\xrightarrow{\epsilon}K\to 0,
\end{equation*}}

{\footnotesize
\begin{equation*}
0\rightarrow B\xrightarrow{\phi_0^*=\begin{bmatrix}x_1
\\ x_2\\
x_3\end{bmatrix}}B^3 \xrightarrow{\phi_1^*=\begin{bmatrix}x_2 &
-x_1 & 1
\\ x_3 & 0 & -x_1\\
0 & x_3 &
-x_2\end{bmatrix}}B^3\xrightarrow{\phi_2^*=\begin{bmatrix}-x_3 &
x_2 & -x_1\end{bmatrix}}B\to 0.
\end{equation*}}

$x_2x_3-x_3x_2=-x_2,\ \ x_3x_1-x_1x_3=x_1+x_2,\ \ x_1x_2-x_2x_1=0$
(not $\mathcal{SAS}$) :

{\footnotesize
\begin{equation*}
0\rightarrow B\xrightarrow{\phi_2=\begin{bmatrix}-x_3+2 & x_2 &
-x_1
\end{bmatrix}} B^3 \xrightarrow{\phi_1=\begin{bmatrix}x_2 & -x_1 & 0
\\ x_3-1 & -1 & -x_1\\
0 & x_3-1 & -x_2\end{bmatrix}}B^3
\xrightarrow{\phi_0=\begin{bmatrix}x_1\\
x_2\\
x_3\end{bmatrix}}B\xrightarrow{\epsilon}K\to 0,
\end{equation*}}

{\footnotesize
\begin{equation*}
0\rightarrow B\xrightarrow{\phi_0^*=\begin{bmatrix}x_1
\\ x_2\\
x_3\end{bmatrix}}B^3 \xrightarrow{\phi_1^*=\begin{bmatrix}x_2 &
-x_1 & 0
\\ x_3-1 & -1 & -x_1\\
0 & x_3-1 &
-x_2\end{bmatrix}}B^3\xrightarrow{\phi_2^*=\begin{bmatrix}-x_3+2 &
x_2 & -x_1\end{bmatrix}}B\to 0.
\end{equation*}}

$x_2x_3-x_3x_2=x_3,\ \ x_3x_1-x_1x_3=x_3,\ \ x_1x_2-x_2x_1=0 $
(not $\mathcal{SAS}$):

{\tiny
\begin{equation*}
0\rightarrow B\xrightarrow{\phi_2=\begin{bmatrix}-x_3 & x_2-1 &
-x_1-1
\end{bmatrix}} B^3 \xrightarrow{\phi_1=\begin{bmatrix}x_2 & -x_1 & 0
\\ x_3 & 0 & -x_1-1\\
0 & x_3 & -x_2+1\end{bmatrix}}B^3
\xrightarrow{\phi_0=\begin{bmatrix}x_1\\
x_2\\
x_3\end{bmatrix}}B\xrightarrow{\epsilon}K\to 0,
\end{equation*}}

{\tiny
\begin{equation*}
0\rightarrow B\xrightarrow{\phi_0^*=\begin{bmatrix}x_1
\\ x_2\\
x_3\end{bmatrix}}B^3 \xrightarrow{\phi_1^*=\begin{bmatrix}x_2 &
-x_1 & 0
\\ x_3 & 0 & -x_1-1\\
0 & x_3 &
-x_2+1\end{bmatrix}}B^3\xrightarrow{\phi_2^*=\begin{bmatrix}-x_3 &
x_2-1 & -x_1-1\end{bmatrix}}B\to 0.
\end{equation*}}

\end{center}

\end{example}

Next we present an algebra essentially regular in the sense of
Gaddis (see \cite{Gaddis} and \cite{Gaddis2}) but not
$\mathcal{SAS}$. A $\mathbb{N}$-filtered algebra $A$ is
essentially regular if and only if $Gr(A)$ is $AS$ (see
Proposition 2.3.7 in \cite{Gaddis}).

\begin{example}
Consider the algebra $\mathcal{U}:=K\{ x,y\}/\langle
yx-xy+y\rangle$, by Corollary 2.3.14 in \cite{Gaddis},
$\mathcal{U}$ is essentially regular of global dimension $2$.
Clearly $\mathcal{U}$ is not Artin-Schelter, actually we will show
that $\mathcal{U}$ is not $\mathcal{SAS}$. The semi-graduation of
$\mathcal{U}$ is as in Example \ref{example18.5.3}, so
$\mathcal{U}_{\geq 1}=\oplus_{p\geq 1}\mathcal{U}_p$ and this
ideal coincides with the two-sided ideal of $\mathcal{U}$
generated by $x$ and $y$. Observe that
$_\mathcal{U}(\mathcal{U}/\mathcal{U}_{\geq 1})\cong
_\mathcal{U}K$, where the structure of left $\mathcal{U}$-module
for $K$ is given by the canonical projection
$\epsilon:\mathcal{U}\to K$ (the same is true for the right
structure, $(\mathcal{U}/\mathcal{U}_{\geq 1})_\mathcal{U}\cong
K_\mathcal{U}$). The following sequence is a free resolution of
$_\mathcal{U}K$:

\begin{equation*}
0\rightarrow \mathcal{U}\xrightarrow{\phi_1=\begin{bmatrix}y & 1-x
\end{bmatrix}} \mathcal{U}^2 \xrightarrow{\phi_0=\begin{bmatrix}x
\\ y\end{bmatrix}}\mathcal{U} \xrightarrow{\epsilon}K\to 0.
\end{equation*}
This statement can be proved using \texttt{SPBWE} or simply by
hand. In fact, $\epsilon$ is clearly surjective; $\phi_1$ is
injective since if $\phi_1(u)=0$ for $u\in \mathcal{U}$, then
$u\begin{bmatrix}y & 1-x
\end{bmatrix}=0$, so $u=0$ since
$\mathcal{U}$ is a domain.
$Im(\phi_0)=\ker(\epsilon)=\mathcal{U}_{\geq 1}$ since
\begin{center}
$\begin{bmatrix}u & v\end{bmatrix}\begin{bmatrix}x \\
y\end{bmatrix}=ux+vy$, with $u,v\in \mathcal{U}$.
\end{center}
$Im(\phi_1)\subseteq \ker(\phi_0)$ since $\phi_1\phi_0=0$:
\begin{center}
$\begin{bmatrix}y & 1-x
\end{bmatrix}\begin{bmatrix}x
\\ y\end{bmatrix}=yx+(1-x)y=0$.
\end{center}
Now, $\ker(\phi_0)\subseteq Im(\phi_1)$: In fact, let
$\begin{bmatrix}u & v\end{bmatrix}\in \ker(\phi_0)$, then
$ux+vy=0$; let
\begin{center}
$u=u_0+u_1x+u_2y+u_3x^2+u_4xy+u_5y^2+\cdots$,

$v=v_0+v_1x+v_2y+v_3x^2+v_4xy+v_5y^2+\cdots$,
\end{center}
from $ux+vy=0$ we conclude that all terms of $u$ involving only
$x's$ have coefficient equals zero, i.e., $u=py$ for some $p\in
\mathcal{U}$, hence $vy=-pyx=-p(xy-y)=p(1-x)y$, but since $A$ is a
domain, $v=p(1-x)$, whence,
\begin{center}
$\begin{bmatrix}u & v\end{bmatrix}=p\begin{bmatrix}y &
1-x\end{bmatrix}\in Im(\phi_1)$.
\end{center}
Now we apply $Hom_\mathcal{U}(-,\, _\mathcal{U}\mathcal{U})$ and
we get the complex of right $\mathcal{U}$-modules
\begin{equation*}
0\rightarrow
Hom_\mathcal{U}(K,\mathcal{U})\xrightarrow{\epsilon^*}
Hom_\mathcal{U}(\mathcal{U}, \mathcal{U})
\xrightarrow{\phi_0^*}Hom_{\mathcal{U}} (\mathcal{U}^2,
\mathcal{U})
\xrightarrow{\phi_1^*}Hom_{\mathcal{U}}(\mathcal{U},\mathcal{U})\to
0.
\end{equation*}
As in Example \ref{example18.4.5},
$Hom_\mathcal{U}(K,\mathcal{U})=0$, $Hom_\mathcal{U}(\mathcal{U},
\mathcal{U})\cong \mathcal{U}$ and $Hom_{\mathcal{U}}
(\mathcal{U}^2, \mathcal{U})\cong \mathcal{U}^2$, so we obtain the
complex
\begin{equation*}
0\rightarrow \mathcal{U} \xrightarrow{\phi_0^*=\begin{bmatrix}x
\\ y\end{bmatrix}}\mathcal{U}^2
\xrightarrow{\phi_1^*=\begin{bmatrix}y & 1-x
\end{bmatrix}}\mathcal{U}\to 0,
\end{equation*}
with $\phi_0^*$ is injective. Moreover,
$Im(\phi_0^*)=\ker(\phi_1^*)$: Indeed, it is clear that
$Im(\phi_0^*)\subseteq \ker(\phi_1^*)$; let $\begin{bmatrix}u \\ v
\end{bmatrix}\in \ker(\phi_1^*)$, then $yu+(1-x)v=0$, from this by
a direct computation we get that $v=q'y$ for some $q'\in
\mathcal{U}$, but again by a direct computation it is easy to show
that given a polynomial $q'$ there exists $q\in \mathcal{U}$ such
that $q'y=yq$, whence $v=yq$ for some $q\in \mathcal{U}$. Hence,
$yu+(1-x)yq=0$ implies $y(u-xq)=0$, so $u=xq$. Therefore,
$\ker(\phi_1^*)\subseteq Im(\phi_0^*)$ and we have proved the
claimed equality.

Thus,
$Ext_{\mathcal{U}}^0(K,\mathcal{U})=Hom_\mathcal{U}(K,\mathcal{U})=0$,
$Ext_{\mathcal{U}}^1(K,\mathcal{U})=0$ and
$Ext_{\mathcal{U}}^2(K,\mathcal{U})=\mathcal{U}/Im(\phi_1^*)\ncong
K_{\mathcal{U}}$. In fact, suppose there exists a right
$\mathcal{U}$-module isomorphism
$\mathcal{U}/Im(\phi_1^*)\xrightarrow{\alpha}K_{\mathcal{U}}$, let
$\alpha(\overline{1}):=\lambda$, then $\alpha(\overline{1})\cdot
(1-x)=\lambda\cdot (1-x)$, so $\alpha(\overline{1}\cdot
(1-x))=\lambda$, i.e., $\alpha(\overline{0})=0=\lambda$, whence
$\alpha=0$, a contradiction. We conclude that $\mathcal{U}$ is not
$\mathcal{SAS}$.
\end{example}

Now we will show an algebra that is $\mathcal{SAS}$ but is not
essentially regular.

\begin{example}
Let $\mathcal{S}$ be the algebra of Theorem 4.0.7 in \cite{Gaddis}
defined by
\begin{center}
$B:=K\{x,y\}/\langle yx-1\rangle$.
\end{center}
$B$ has a semi-graduation as in Example \ref{example18.5.3}. It is
known that ${\rm gld}(B)=1$ (see \cite{Gaddis}, Proposition 4.1.1
and also \cite{Bavula3}) and it is clear that $B/B_{\geq1}=0$.
Thus, the condition (iv) in Definition \ref{definition18.5.1}
trivially holds and $B$ is $\mathcal{SAS}$. Now observe that
$Gr(B)\cong R_{yx}$, where $R_{yx}$ is defined in \cite{Gaddis} by
\begin{center}
$R_{yx}:=K\{x,y\}/\langle yx\rangle$.
\end{center}
According to Corollary 2.3.14 in \cite{Gaddis}, $B$ is not
essentially regular.
\end{example}

We conclude the list of examples with an algebra that is not
essentially regular neither $\mathcal{SAS}$.

\begin{example}
Let $B:=R_{yx}$. Observe that $B$ is a finitely graded algebra
with graduation as in in Example \ref{example18.5.3}; by a direct
computation we get the following exact sequences
\begin{equation*}
0\rightarrow B\xrightarrow{\phi_1=\begin{bmatrix}y & 0
\end{bmatrix}} B^2 \xrightarrow{\phi_0=\begin{bmatrix}x
\\ y\end{bmatrix}}B \xrightarrow{\epsilon}K\to 0,
\end{equation*}
\begin{equation*}
0\rightarrow B \xrightarrow{\phi_0^*=\begin{bmatrix}x
\\ y\end{bmatrix}}B^2
\xrightarrow{\phi_1^*=\begin{bmatrix}y & 0
\end{bmatrix}}B\to 0.
\end{equation*}
According to \cite{Gaddis}, ${\rm gld}(B)=2$, but note that
$B/yB\ncong _BK$. In fact, suppose there exists a right $B$-module
isomorphism $B/yB\xrightarrow{\alpha}K_{B}$, let
$\alpha(\overline{1}):=\lambda$, then $\alpha(\overline{1})\cdot
x=\lambda\cdot x$, so $\alpha(\overline{x})=0$, i.e.,
$\overline{x}=\overline{0}$ but clearly $x\notin yB$. This says
that $B$ is not $\mathcal{SAS}$ neither essentially regular.
\end{example}

The previous examples induce the following general result.

\begin{theorem}\label{theorem18.4.12}
Let $K$ be a field and $A:=\sigma(R)\langle x_1,\dots,x_n\rangle$
be a bijective skew $PBW$ extension that satisfies the following
conditions:
\begin{enumerate}
\item[\rm (i)]$R$ and $A$ are $K$-algebras.
\item[\rm (ii)]${\rm lgld}(R)<\infty$.
\item[\rm (iii)]$R$ is a $FSG$ ring with semi-graduation $R=\bigoplus_{p\geq 0}R_p$.
\item[\rm (iv)]$R$ is connected, i.e., $R_0=K$.
\item[\rm (v)]For $1\leq i\leq n$, $\sigma_i,\delta_i$ in Proposition \ref{sigmadefinition} are homogeneous, and there exist $i,j$ such that the parameter
$d_{ij}$ in $(\ref{equation1.2.1})$ satifies $d_{ij}\in K-\{0\}$.
\end{enumerate}
Then, $B$ is a $\mathcal{SAS}$ algebra.
\end{theorem}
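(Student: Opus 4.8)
We verify directly the four conditions of Definition \ref{definition18.5.1} for the algebra $A$ (the ring written $B$ in the statement is $A$). Conditions (i) and (ii), that $A$ is $FSG$ and connected, will be obtained by refining the natural semi-graduation carried by any skew $PBW$ extension (Proposition \ref{proposition16.5.7}(iii)) with the help of the semi-graduation of $R$. Condition (iii) will follow from hypothesis (ii) together with a standard bound on the global dimension of bijective skew $PBW$ extensions. Condition (iv) will turn out to hold for a trivial reason: hypothesis (v) forces $A/A_{\geq 1}=0$.

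For the grading, I would set $A=\bigoplus_{m\geq 0}A_m$ with $A_m:=\bigoplus_{p+|\alpha|=m}R_px^{\alpha}$, viewed as a $K$-subspace. Since ${\rm Mon}(A)$ is a free left $R$-basis of $A$ and $R=\bigoplus_pR_p$, this is a genuine decomposition of the $K$-space $A$, with $1\in R_0x^{0}=A_0$. To check $A_mA_n\subseteq A_0\oplus\cdots\oplus A_{m+n}$ I would take $rx^{\alpha}\in A_m$ and $sx^{\beta}\in A_n$ (so $r\in R_p$, $s\in R_q$, $p+|\alpha|=m$, $q+|\beta|=n$), write $(rx^{\alpha})(sx^{\beta})=r(x^{\alpha}s)x^{\beta}$, push $x^{\alpha}$ across $s$ using $x_is=\sigma_i(s)x_i+\delta_i(s)$, and then re-sort the resulting monomials in the $x_i$ by means of (\ref{equation1.2.1}); the homogeneity of the $\sigma_i,\delta_i$ assumed in (v), together with the resulting compatibility of the relations (\ref{equation1.2.1}) with the combined degree, keeps every term produced in degree $\leq m+n$. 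Then $A_0=R_0=K$, so $A$ is connected; each $A_m$ is finite dimensional over $K$ since only finitely many $\alpha$ have $|\alpha|\leq m$ and each $R_p$ is finite dimensional over $K=R_0$; and $A$ is generated as a ring by $K$ together with $x_1,\dots,x_n$ and a finite set of generators of $R$ over $K$. Hence $A$ is $FSG$ and connected, which is Definition \ref{definition18.5.1}(i)--(ii).

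For (iii) I would invoke the bound ${\rm lgld}(A)\leq{\rm lgld}(R)+n$, valid for bijective skew $PBW$ extensions: passing to the associated graded ring $Gr(A)$ one obtains a quasi-commutative bijective skew $PBW$ extension of $R$, i.e. an iterated skew polynomial ring over $R$ built with automorphisms and no derivations, whose global dimension increases by at most one at each of the $n$ steps, and the global dimension of the filtered ring $A$ is at most that of $Gr(A)$. By hypothesis (ii) this gives $d:={\rm lgld}(A)<\infty$, which is Definition \ref{definition18.5.1}(iii).

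For (iv) I would show $A_{\geq 1}=A$. By Definition \ref{definition16.5.30}(i), $A_{\geq 1}$ is a two-sided ideal containing $\bigoplus_{p\geq 1}A_p$; in particular it contains each $x_i$ (since $x_i\in A_1$). Taking the pair $i,j$ provided by (v), relation (\ref{equation1.2.1}) reads
\[
d_{ij}=x_jx_i-c_{ij}x_ix_j-a_{ij}^{(1)}x_1-\cdots-a_{ij}^{(n)}x_n,
\]
and each summand on the right lies in the two-sided ideal $A_{\geq 1}$ because each contains a factor $x_k$; hence $d_{ij}\in A_{\geq 1}$. As $d_{ij}\in K-\{0\}$ is a unit of $A$, we get $1\in A_{\geq 1}$, so $A_{\geq 1}=A$ and $A/A_{\geq 1}=0$. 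Consequently $Ext_A^i({}_A(A/A_{\geq 1}),{}_AA)=Ext_A^i(0,A)=0$ for every $i$ and $(A/A_{\geq 1})_A=0$, so Definition \ref{definition18.5.1}(iv) holds trivially and $A$ is $\mathcal{SAS}$. The delicate point is the second step: one must make precise the notion of \emph{homogeneous} $\sigma_i,\delta_i$ in the semi-graded (rather than $\mathbb{N}$-graded) setting and check that this really does force the relations (\ref{equation1.2.1}) to respect the combined degree, i.e. that $c_{ij}$ is a scalar and that the degrees of the $a_{ij}^{(k)}$ and of $d_{ij}$ are suitably bounded, so that $A$ genuinely becomes an $FSG$ ring; the global-dimension estimate, though standard, also needs a little care with the filtration and Noetherian hypotheses for the cleanest derivation.
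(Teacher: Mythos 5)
Your proof follows essentially the same route as the paper: the same combined semi-graduation $A_m=\bigoplus_{p+|\alpha|=m}R_px^{\alpha}$ making $A$ an $FSG$ connected algebra, the finiteness of ${\rm lgld}(A)$ for bijective skew $PBW$ extensions (which the paper simply cites from \cite{lezamareyes1}), and the observation that hypothesis (v) forces $A_{\geq 1}=A$, hence $A/A_{\geq 1}=0$ and condition (iv) of Definition \ref{definition18.5.1} holds vacuously. The extra details you supply --- the explicit argument that the unit $d_{ij}$ lies in the two-sided ideal $A_{\geq 1}$, and the caveat that homogeneity of the $\sigma_i,\delta_i$ and degree bounds on the constants in (\ref{equation1.2.1}) must be checked for the combined degree to give a genuine semi-graduation --- are precisely the points the paper states without proof, so your write-up is if anything more careful.
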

\begin{proof}
First note that $A$ is $FSG$ and connected with semi-graduation
\begin{center}
$A_0:=K$, $A_p:=_K\langle R_qx^\alpha\mid q+|\alpha|=p\rangle$ for
$p\geq 1$,
\end{center}
with $x^\alpha$ as in Definition \ref{gpbwextension} and
$|\alpha|:=\alpha_1+\cdots+\alpha_n$. Observe that if
$r_1,\dots,r_m$ generate $R$ as $K$-algebra, then
$r_1,\dots,r_m,x_1,\dots,x_n$ generate $A$ as $K$-algebra.
Moreover, $\dim_K A_p<\infty$ for every $p\geq 0$.

We know that ${\rm lgld}(A)<\infty$ (see \cite{lezamareyes1}). The
condition (v) in the statement of the theorem says that $A/A_{\geq
1}=0$, so the condition (iv) in Definition \ref{definition18.5.1}
trivially holds.
\end{proof}

\begin{remark}
From the general homological properties of skew $PBW$ extensions
we know that if $R$ is a noetherian domain, then $A$ is a
noetherian domain (\cite{lezamareyes1}). This agrees with the
question of Problem 4.
\end{remark}

\section{Zariski cancellation problem}

The Zariski cancellation problem (ZCP) arises in commutative
algebra and can be formulated in the following way: Let $K$ be a
field, $A:=K[t_1,\dots,t_{n}]$ be the commutative algebra of usual
polynomials and $B$ be a commutative $K$-algebra,
\begin{center}
if $A[t]\cong B[t]$, then $A\cong B$?
\end{center}
The ZCP has very interesting connections with some famous
classical problems: The Automorphism problem, the Dixmier
conjecture, the Jacobian conjecture, among some others, see a
discussion in \cite{Essen}. Recently, the ZCP has been considered
for non-commutative algebras, in \cite{BellZhang} Bell and Zhang
studied the Zariski cancellation problem for some non-commutative
Artin-Schelter regular algebras; other works on this problem are
\cite{Zhangetal}, \cite{Zhangetal2}, \cite{LezamaZhang},
\cite{HelbertZhang}. In this section we will formulate a problem
related to the Zariski cancellation problem in the context of
$\mathcal{SAS}$ algebras.

Despite of the most important results about the cancellation
problem are for algebras over fields (see for example Theorem
\ref{theorem20.2.1}), the general recent formulation of this
problem is for $R$-algebras, where $R$ is an arbitrary commutative
domain (Definition \ref{definition20.1.1}). Thus, the general
definition below includes the algebras over fields as well as the
particular case of $\mathbb{Z}$-algebras, i.e., arbitrary rings.

\begin{definition}[\cite{BellZhang}]\label{definition20.1.1}
Let $R$ be a conmutative domain and let $A$ be a $R$-algebra.
\begin{enumerate}
\item[\rm (i)]$A$ is cancellative if for every $R$-algebra $B$,
\begin{center}
$A[t]\cong B[t]\Rightarrow A\cong B$.
\end{center}
\item[\rm (ii)]$A$ is strongly cancellative if for any $d\geq 1$ and every $R$-algebra
$B$,
\begin{center}
$A[t_1,\dots,t_d]\cong B[t_1,\dots,t_d]\Rightarrow A\cong B$.
\end{center}
\item[\rm (iii)]$A$ is universally cancellative if for any $R$-flat finitely generated commutative
domain $S$ such that $S/I\cong R$ for some ideal $I$ of $S$, and
any $R$-algebra $B$
\begin{center}
$A\otimes S\cong B\otimes S\Rightarrow A\cong B$,
\end{center}
where the tensor product is over $R$.
\end{enumerate}
\end{definition}

\begin{remark}\label{remark16.4.2}
(i) All isomorphisms in the previous definition are isomorphisms
of $R$-algebras, and hence, these notions depend on the ring $R$.

(ii) Observe that the commutative Zariski cancellation problem
asks if the $K$-algebra of polynomials $K[t_1,\dots,t_n]$ is
cancellative. Abhyankar-Eakin-Heinzer in \cite{Abhyankar} proved
that $K[t_1]$ is cancellative (actually, they proved that every
commutative finitely generated domain of Gelfand-Kirillov
dimension one is cancellative, see Corollary 3.4 in
\cite{Abhyankar}); Fujita in \cite{Fujita2} and Miyanishi-Sugie in
\cite{Miyanishi} proved that if $char K=0$, then $K[t_1,t_2]$ is
cancellative; if $char K>0$, Russell in \cite{Russell} proved that
$K[t_1,t_2]$ is cancellative. Recently, in 2014, Gupta proved that
if $n\geq 3$ and $char K>0$ then $K[t_1,\dots,t_n]$ is not
cancellative (see \cite{Gupta}, \cite{Gupta2}). The ZCP problem
for $K[t_1,\dots,t_n]$ remains open for $n\geq 3$ and $char K=0$.
\end{remark}

\begin{proposition}[\cite{BellZhang}, Remark 1.2]
For any $R$-algebra $A$,
\begin{center}
universally cancellative $\Rightarrow$ strongly cancellative
$\Rightarrow$ cancellative.
\end{center}
\end{proposition}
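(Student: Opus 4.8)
The final statement to prove is the chain of implications:

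\begin{center}
universally cancellative $\Rightarrow$ strongly cancellative $\Rightarrow$ cancellative.
\end{center}

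The plan is to prove each implication separately, working directly from Definition \ref{definition20.1.1}. Throughout, $R$ is a fixed commutative domain and all algebras and isomorphisms are over $R$.

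First I would establish the easier implication, strongly cancellative $\Rightarrow$ cancellative. This is essentially immediate from the definitions: if $A$ is strongly cancellative, then by definition the implication $A[t_1,\dots,t_d]\cong B[t_1,\dots,t_d]\Rightarrow A\cong B$ holds for \emph{every} $d\geq 1$ and every $R$-algebra $B$. Specializing to $d=1$ recovers exactly the statement that $A[t]\cong B[t]\Rightarrow A\cong B$, which is the definition of $A$ being cancellative. So this direction requires only unwinding the quantifiers, with no real content beyond noting that the single-variable case is a special instance of the multivariable case.

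The second implication, universally cancellative $\Rightarrow$ strongly cancellative, is where the actual work lies. Here I would argue that the polynomial ring $S:=R[s_1,\dots,s_d]$ is an admissible choice of test algebra in the definition of universal cancellation. The key observations to verify are: (1) $S$ is a finitely generated commutative domain (being a polynomial ring over the domain $R$); (2) $S$ is $R$-flat, since polynomial rings are free, hence flat, as modules over the base ring; and (3) there is an ideal $I$ of $S$ with $S/I\cong R$, namely the ideal $I=\langle s_1,\dots,s_d\rangle$ generated by the indeterminates, whose quotient recovers $R$ via the augmentation sending each $s_i\mapsto 0$. Having checked that $S$ satisfies all the hypotheses required of the test ring, I would then identify the base-change $A\otimes_R S$ with the polynomial extension: there is a natural isomorphism of $R$-algebras $A\otimes_R R[s_1,\dots,s_d]\cong A[s_1,\dots,s_d]$, and similarly for $B$. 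Under this identification, the hypothesis $A[t_1,\dots,t_d]\cong B[t_1,\dots,t_d]$ of strong cancellation becomes exactly $A\otimes_R S\cong B\otimes_R S$, and universal cancellation then yields $A\cong B$, as required.

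The main obstacle, and the point deserving the most care, is the compatibility of the tensor-product identification $A\otimes_R R[s_1,\dots,s_d]\cong A[s_1,\dots,s_d]$ with the $R$-algebra structure, so that the isomorphism in the hypothesis of universal cancellation genuinely corresponds to the isomorphism in the hypothesis of strong cancellation. One must confirm that this is an isomorphism of $R$-algebras (not merely of $R$-modules), which follows because $A[s_1,\dots,s_d]$ is the coproduct of $A$ and the free commutative $R$-algebra on $d$ generators in the category of $R$-algebras, and base change along $R\to S$ realizes exactly this coproduct. A secondary subtlety, noted in Remark \ref{remark16.4.2}, is that all the isomorphisms involved are isomorphisms of $R$-algebras and the notions are $R$-dependent; one should check that the test ring $S$ and the augmentation $S/I\cong R$ respect the $R$-algebra structure, which they do by construction. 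Once these identifications are in place, the chain of implications follows by direct substitution.
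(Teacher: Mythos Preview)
Your argument is correct. The paper itself does not prove this proposition; it simply records the implications with a citation to \cite{BellZhang}, Remark 1.2, so there is no ``paper's proof'' to compare against. Your verification that $S=R[s_1,\dots,s_d]$ satisfies the three hypotheses in Definition \ref{definition20.1.1}(iii), together with the identification $A\otimes_R R[s_1,\dots,s_d]\cong A[s_1,\dots,s_d]$, is exactly the intended content of the remark in \cite{BellZhang}.

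One small inaccuracy in your write-up: you justify the isomorphism $A\otimes_R R[s_1,\dots,s_d]\cong A[s_1,\dots,s_d]$ by calling $A[s_1,\dots,s_d]$ the coproduct of $A$ and $R[s_1,\dots,s_d]$ in the category of $R$-algebras. For noncommutative $A$ this is not true; the coproduct in that category is the free product, not the polynomial extension with central variables. The correct statement is simply that the map $A\otimes_R R[s_1,\dots,s_d]\to A[s_1,\dots,s_d]$ sending $a\otimes s_i\mapsto as_i$ is an $R$-algebra isomorphism because the images $1\otimes s_i$ are central in the tensor product (since $(1\otimes s)(a\otimes 1)=a\otimes s=(a\otimes 1)(1\otimes s)$). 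This is what you need, and it holds; just drop the coproduct language.
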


For the investigation of the ZCP problem for a given
non-commutative algebra $A$ have been used some subalgebras of
$A$. The first one is the center.

\begin{theorem}[\cite{BellZhang}, Proposition 1.3]\label{theorem20.2.1}
Let $K$ be a field and $A$ be a $K$-algebra. If $Z(A)=K$, then $A$
is universally cancellative, and hence, cancellative.
\end{theorem}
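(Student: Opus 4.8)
The plan is to reduce everything to a single statement about centers: the hypothesis $Z(A)=K$ forces $Z(B)=K$ as well, after which the desired isomorphism $A\cong B$ drops out by quotienting by an ideal generated by central elements. So fix, as in Definition \ref{definition20.1.1}, an $R$-flat finitely generated commutative domain $S$ with an ideal $I$ satisfying $S/I\cong R=K$, an $R$-algebra $B$, and a $K$-algebra isomorphism $\phi\colon A\otimes_K S\xrightarrow{\ \sim\ }B\otimes_K S$ (by Remark \ref{remark16.4.2} all isomorphisms occurring in these cancellation notions are isomorphisms of $K$-algebras). Since $K$ is a field, $S$ is free as a $K$-module, and choosing a $K$-basis of $S$ one checks immediately that for every $K$-algebra $C$ one has $Z(C\otimes_K S)=Z(C)\otimes_K S$. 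In particular $Z(A\otimes_K S)=Z(A)\otimes_K S=K\otimes_K S=S$, while $Z(B\otimes_K S)=Z(B)\otimes_K S$.

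The first, and main, step is to prove $Z(B)=K$. Restricting the ring isomorphism $\phi$ to centers yields a $K$-algebra isomorphism $S\cong Z(B)\otimes_K S$. Write $L:=Z(B)$; the map $\ell\mapsto\ell\otimes 1$ embeds $L$ into $Z(B)\otimes_K S\cong S$, so $L$ is a commutative domain of finite Gelfand--Kirillov dimension. I claim ${\rm GKdim}(L)=0$: otherwise $L$ contains an element transcendental over $K$, hence a polynomial subalgebra $K[z]$; tensoring over $K$ with $S$ (flat, since $K$ is a field) gives an embedding $S[z]\cong K[z]\otimes_K S\hookrightarrow L\otimes_K S\cong S$, whence ${\rm GKdim}(S)+1={\rm GKdim}(S[z])\le{\rm GKdim}(S)<\infty$, a contradiction. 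Now every finitely generated $K$-subalgebra of a commutative domain of Gelfand--Kirillov dimension $0$ is finite-dimensional over $K$, hence a finite field extension of $K$, so $L$ is a directed union of finite field extensions of $K$ and is therefore itself a field, algebraic over $K$. Consequently the composite $L\hookrightarrow S\twoheadrightarrow S/I\cong K$ is injective (its kernel $L\cap I$ is a proper ideal of the field $L$, hence zero), and since it fixes $K$ we conclude $L=K$.

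Once $Z(B)=K$ is known, $\phi$ restricts to a $K$-algebra automorphism $\phi_0$ of $S$ (identified with both $Z(A\otimes_K S)$ and $Z(B\otimes_K S)$). Put $I'':=\phi_0(I)$, an ideal of $S$ with $S/I''\cong S/I\cong K$. Because the elements $1\otimes I$ lie in the center of $A\otimes_K S$, the two-sided ideal they generate is exactly $A\otimes_K I$, and likewise $\phi(1\otimes I)=1\otimes I''$ generates the two-sided ideal $B\otimes_K I''$ of $B\otimes_K S$; since $\phi$ is an isomorphism it carries $A\otimes_K I$ onto $B\otimes_K I''$. Passing to quotients we obtain
\[
A\;\cong\;(A\otimes_K S)/(A\otimes_K I)\;\xrightarrow{\ \sim\ }\;(B\otimes_K S)/(B\otimes_K I'')\;\cong\;B,
\]
so $A\cong B$. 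This shows $A$ is universally cancellative; the ``hence cancellative'' clause is the special case $S=K[t]$, $I=(t)$, and in any case follows from the implications universally cancellative $\Rightarrow$ strongly cancellative $\Rightarrow$ cancellative recorded before the statement.

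I expect the only real difficulty to be the first step, the rigidity of the center of $B$. The subtlety is that $Z(B)$ is not a priori finitely generated over $K$, so Noether normalization is unavailable; the argument above sidesteps this by using only two soft facts — that Gelfand--Kirillov dimension is monotone under passage to subalgebras and increases by exactly $1$ under adjoining a central polynomial variable — together with the hypothesis $S/I\cong K$ to pin $Z(B)$ down to $K$. Everything afterwards is routine bookkeeping of central ideals under the given isomorphism.
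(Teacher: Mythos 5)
Your proof is correct; note that the paper gives no argument of its own for Theorem \ref{theorem20.2.1}, which is quoted from \cite{BellZhang} (Proposition 1.3), so the comparison is with that cited proof. Your reconstruction follows essentially the same route: $Z(A\otimes_K S)=S$ and $Z(B\otimes_K S)=Z(B)\otimes_K S$, a Gelfand--Kirillov dimension argument plus the evaluation $S\to S/I\cong K$ forcing $Z(B)=K$, and then transporting the central ideal generated by $1\otimes I$ through $\phi$ and passing to quotients. The only cosmetic difference is how ${\rm GKdim}(Z(B))=0$ is obtained: the cited proof uses additivity of GK-dimension under tensoring with an affine commutative domain, whereas you embed $S[z]\hookrightarrow S$ for a transcendental $z$ and contradict monotonicity; both steps are sound.
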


In \cite{LezamaHelbert2} was computed the center of many
$K$-algebras interpreted as skew $PBW$ extensions, some of them
with trivial center, and hence, such algebras are cancellative.

Another subalgebra involved in the study of the ZCP problem is the
Makar-Limanov invariant.

\begin{definition}
Let $A$ be a $R$-algebra.
\begin{enumerate}
\item[\rm (i)]${\rm Der}(A)$ denotes the collection of $R$-derivations of
$A$ and ${\rm LND}(A)$ the collection of locally nilpotent
$R$-derivations of $A$; $\delta\in {\rm Der}(A)$ is locally
nilpotent if given $a\in A$ there exists $n\geq 1$ such that
$\delta^n(a)=0$.
\item[\rm (ii)]The Makar-Limanov invariant of $A$ is defined to be
\[
{\rm ML}(A):=\bigcap_{\delta\in {\rm LND}(A)}\ker(\delta).
\]
\end{enumerate}
\end{definition}

\begin{theorem}[\cite{BellZhang}, Theorems 3.3 and 3.6]\label{theorem16.4.17}
Let $A$ be a $R$-algebra that is a finitely generated domain of
finite Gelfand-Kirillov dimension.
\begin{enumerate}
\item[\rm (i)]If ${\rm ML}(A[t])=A$, then $A$ is cancellative.
\item[\rm (ii)]If $char R=0$ and ${\rm ML}(A)=A$, then $A$ is cancellative.
\end{enumerate}
\end{theorem}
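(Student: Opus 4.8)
The plan is to reduce part (ii) to part (i), and to prove part (i) from two facts: the Makar-Limanov invariant is an isomorphism invariant, and for any $R$-algebra $C$ the coordinate derivation $\partial_t:=\partial/\partial t$ of $C[t]$ is a locally nilpotent $R$-derivation whose kernel, in characteristic zero, is exactly $C$, so that ${\rm ML}(C[t])\subseteq\ker\partial_t=C$. The first preliminary step is routine: if $\varphi\colon C[t]\to D[t]$ is an isomorphism of $R$-algebras, then $\delta\mapsto\varphi\delta\varphi^{-1}$ is a bijection ${\rm LND}(C[t])\to{\rm LND}(D[t])$ carrying each $\ker\delta$ onto its $\varphi$-image, whence $\varphi({\rm ML}(C[t]))={\rm ML}(D[t])$.

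For part (i), suppose $B$ is an $R$-algebra and $\varphi\colon A[t]\to B[t]$ an $R$-algebra isomorphism. First I would check $B$ is again a finitely generated domain of finite ${\rm GKdim}$: it embeds in $B[t]\cong A[t]$, hence is a domain; expressing a finite generating set of the affine algebra $B[t]$ through the $t$-coefficients of its members and comparing $t$-degrees shows $B$ is affine; and ${\rm GKdim}(B)\le{\rm GKdim}(A[t])<\infty$. Then the hypothesis ${\rm ML}(A[t])=A$ and the invariance of ${\rm ML}$ give $\varphi(A)={\rm ML}(B[t])\subseteq B$. Next I put $s:=\varphi(t)$; since $t$ is central in $A[t]$, $s$ is central in $B[t]$, and $B[t]=\varphi(A[t])=\varphi(A)[s]$ is a polynomial ring in the central variable $s$ over $\varphi(A)$. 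As $t\notin B\supseteq\varphi(A)$ and $s\notin B$, writing $t=\sum_i a_is^i$ with $a_i\in\varphi(A)$ and $s=\sum_j b_jt^j$ with $b_j\in B$ and substituting one into the other, a comparison of $t$-degrees forces both expressions to be linear; comparing coefficients in the free left modules $B[t]=\bigoplus_j Bt^j$ and $\varphi(A)[s]=\bigoplus_i\varphi(A)s^i$ then yields $a_1b_1=b_1a_1=1$, so $a_1$ is a unit of $B$ and $s=a_1^{-1}(t-a_0)$. Finally, for an arbitrary $b\in B$ I would expand $b=\sum_k u_ks^k$ with $u_k\in\varphi(A)$ and substitute $s=a_1^{-1}(t-a_0)$: in $B[t]$ the right-hand side has $t$-degree $k_0:=\max\{k:u_k\ne 0\}$ with nonzero leading coefficient $u_{k_0}(a_1^{-1})^{k_0}$ (here $B$ being a domain is used), whereas $b$ has $t$-degree $0$; hence $u_k=0$ for $k\ge 1$ and $b=u_0\in\varphi(A)$. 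Therefore $B=\varphi(A)$ and $\varphi|_A\colon A\to B$ is an isomorphism of $R$-algebras.

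For part (ii), where $char R=0$ and ${\rm ML}(A)=A$ (equivalently $A$ has no nonzero locally nilpotent $R$-derivation), the plan is to establish ${\rm ML}(A[t])=A$ and then apply part (i). The inclusion ${\rm ML}(A[t])\subseteq\ker\partial_t=A$ is immediate. For the reverse inclusion I must show every $\delta\in{\rm LND}(A[t])$ vanishes on $A$, which reduces to showing $\delta$ restricts to a locally nilpotent $R$-derivation of $A$ (forced to be $0$ by hypothesis). This is the classical fact that over a field of characteristic zero an affine domain with no nonzero locally nilpotent derivation satisfies ${\rm LND}(A[t])=\{a\,\partial_t:a\in A\}$; its proof uses the degree function attached to a locally nilpotent derivation of a domain together with the $t$-adic filtration of $A[t]$, and it is precisely here that $char R=0$ and the finiteness hypotheses (affine, domain, finite ${\rm GKdim}$) are needed.

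The step I expect to be the real obstacle is this last lemma in part (ii): upgrading ``no nonzero LND on $A$'' to ``every LND of $A[t]$ fixes $A$ pointwise'' is not a formal manipulation and genuinely requires the structure theory of locally nilpotent derivations of affine domains. A secondary subtlety is the positive-characteristic case of part (i): there $\ker\partial_t$ is $C[t^p]$, not $C$, so one should work throughout with locally nilpotent iterative higher derivations, whose ring of constants on $C[t]$ is again $C$, and define the invariant accordingly; with that replacement the argument for (i) above carries over unchanged.
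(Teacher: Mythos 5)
First, a point of comparison: the paper does not prove Theorem \ref{theorem16.4.17} at all; it is quoted from \cite{BellZhang} (Theorems 3.3 and 3.6), so your attempt can only be measured against that source. Your argument for part (i) is essentially the Makar-Limanov-type argument used there, and it is correct where it applies: invariance of ${\rm ML}$ under isomorphisms, ${\rm ML}(B[t])\subseteq \ker\partial_t=B$ when ${\rm char}\,A=0$, and the leading-coefficient analysis in $B[t]=\varphi(A)[s]$, which only uses that $B$ is a domain. Two small repairs: the identity $b_1a_1=1$ should be deduced from $a_1b_1=1$ and the domain property (the coefficients $b_j$ lie in $B$, not in $\varphi(A)$, so you cannot compare in $\bigoplus_i\varphi(A)s^i$ as written; alternatively, note that $\{s^i\}$ is left $B$-linearly independent). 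And in positive characteristic you do not need to redefine the invariant: if ${\rm char}\,A=p>0$, any $\delta\in{\rm LND}(A[t])$ vanishing on $A$ satisfies $\delta(t)\in Z(A)[t]$, hence its kernel contains $A[t^p]$, so ${\rm ML}(A[t])=A$ is impossible and part (i) is vacuously true there; replacing ${\rm ML}$ by the higher-derivation invariant ${\rm ML}^H$, as you propose, changes the statement rather than proving it (that variant is a separate result in \cite{BellZhang}).

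The genuine gap is in part (ii). Your reduction to part (i) needs exactly the implication ${\rm ML}(A)=A\Rightarrow{\rm ML}(A[t])=A$, i.e.\ that every locally nilpotent $R$-derivation of $A[t]$ annihilates $A$; you invoke this as a ``classical fact'' about affine domains in characteristic zero and leave it unproved. But $A$ here is a noncommutative domain, so the commutative rigidity results in the style of Crachiola--Makar-Limanov cannot simply be cited; this lifting statement is precisely the substance of Theorem 3.6 in \cite{BellZhang}, and its proof is a nontrivial filtration/leading-term (degree-function) argument in which finite generation and finite Gelfand-Kirillov dimension enter essentially. As it stands, your part (ii) restates the needed lemma rather than proving it, so the proposal is incomplete at its crux. (A further minor imprecision: in the noncommutative setting the locally nilpotent derivations of $A[t]$ that kill $A$ have the form $g\,\partial_t$ with $g\in Z(A)[t]$, not $a\,\partial_t$ with arbitrary $a\in A$.)
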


The effective computation of ${\rm ML}(A)$ is in general a
difficult task, therefore other strategies and techniques have
been introduced in order to investigate the cancellation property
for non-commutative algebras. For example, setting algebraic
condition on $A$, it is interesting to know if $A$ becomes
cancellative. In this direction recently have been proved the
following results.

\begin{theorem}[\cite{LezamaZhang}, Theorem 4.1]
Let $A$ be a $R$-algebra. If $A$ is left $($or right$)$ artinian,
then $A$ is strongly cancellative, and hence, cancellative.
\end{theorem}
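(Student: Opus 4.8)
We sketch a possible proof. The plan is to establish the stronger property directly: for every $d\ge 1$ and every $R$-algebra $B$, an $R$-algebra isomorphism $A[t_1,\dots,t_d]\cong B[t_1,\dots,t_d]$ should force $A\cong B$, and the case $d=1$ then gives cancellation. First I would record regularity on $B$: since $A$ is left artinian, the Hopkins--Levitzki theorem makes $A$ left noetherian, so by the Hilbert basis theorem $A[t_1,\dots,t_d]$ is left noetherian; transporting this along the isomorphism, $B[t_1,\dots,t_d]$ is left noetherian, and since the $t_i$ are central, $B\cong B[t_1,\dots,t_d]/(t_1,\dots,t_d)$ is a quotient of a left noetherian ring, hence left noetherian. (If $A$ is right artinian, the same argument works on the right.)

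Next I would show that $B$ is again left artinian; this is the point at which the number $d$ of adjoined variables matters. Using the Gabriel--Rentschler (deviation) Krull dimension $\mathcal{K}$, defined for every left noetherian ring, together with the formula $\mathcal{K}(S[t])=\mathcal{K}(S)+1$ for left noetherian $S$, iteration gives $\mathcal{K}(A[t_1,\dots,t_d])=\mathcal{K}(A)+d=d$ because the artinian ring $A$ has $\mathcal{K}(A)=0$. The same computation for $B$ yields $\mathcal{K}(B)+d=d$, so $\mathcal{K}(B)=0$, and a left noetherian ring of Krull dimension $0$ is left artinian. One is thus reduced to the case in which $A$ and $B$ are both left artinian and $C:=A[t_1,\dots,t_d]\cong B[t_1,\dots,t_d]=:C'$.

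The core of the argument is to recover $A$ from $C$ by constructions invariant under ring isomorphism. By Amitsur's theorem $J(C)=J(A)[t_1,\dots,t_d]$, and since $A$ is artinian $J(A)$ is nilpotent, so $J(C)^{k}=J(A)^{k}[t_1,\dots,t_d]$ for all $k$ and every isomorphism $C\xrightarrow{\sim}C'$ automatically respects the radical filtrations. On the semisimple quotient $C/J(C)\cong (A/J(A))[t_1,\dots,t_d]$: writing $A/J(A)=\prod_i M_{n_i}(D_i)$ gives $C/J(C)\cong\prod_i M_{n_i}\!\bigl(D_i[t_1,\dots,t_d]\bigr)$; its center $\prod_i Z(D_i)[t_1,\dots,t_d]$ is a finite product of polynomial domains, so the primitive central idempotents single out the factors, on each factor $n_i$ equals the number of terms in a complete decomposition of $1$ into orthogonal primitive idempotents, a corner ring recovers $D_i[t_1,\dots,t_d]$, and finally $D_i=\{0\}\cup\bigl(D_i[t_1,\dots,t_d]\bigr)^{\times}$, since the units of a polynomial ring over a division ring are exactly its nonzero constants. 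Hence $A/J(A)\cong B/J(B)$; the same ``polynomial-extension'' reconstruction, applied to each layer $J(A)^{k}/J(A)^{k+1}$ regarded as a module over the semisimple ring $A/J(A)$, identifies it with the corresponding layer for $B$.

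The remaining step --- and the one I expect to be the real obstacle --- is to assemble these matched layers into a genuine isomorphism $A\cong B$, i.e.\ to show that the filtered ring $C$, with its canonical radical-power filtration, determines the coefficient ring $A$ up to isomorphism; note that $A$ is not a canonical subring of $C$, since automorphisms of $C$ need not preserve it, so one genuinely has to compare the two presentations rather than simply ``read off the constants''. One plausible route is a descending induction on the nilpotency index $r$ of $J(A)$: having matched the semisimple layers, at the passage from $C/J(C)^{m}$ to $C/J(C)^{m+1}$ one exploits that the bimodule extension $0\to J(A)^{m}/J(A)^{m+1}\to A/J(A)^{m+1}\to A/J(A)^{m}\to 0$ over $A/J(A)$ is assembled from polynomial modules in a way mirrored on the $B$-side, and corrects the isomorphism by an inner automorphism together with a translation of the polynomial variables so that it carries the constants of $C$ onto those of $C'$; the obstruction to each correction is governed by a vanishing power of $J(A)$, so the induction terminates and gives $A\cong B$. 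Since $d\ge 1$ was arbitrary, $A$ is strongly cancellative, and in particular cancellative. Everything preceding the last step is essentially standard --- Hopkins--Levitzki, the Hilbert basis theorem, Krull dimension, Amitsur's theorem, Wedderburn--Artin bookkeeping --- so the work concentrates in the normalization argument just described.
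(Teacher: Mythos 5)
The paper does not actually prove this statement: it is quoted from \cite{LezamaZhang} (Theorem 4.1), so your attempt has to stand on its own, and as written it does not. Your reductions are fine: Hopkins--Levitzki plus the Hilbert basis theorem give noetherianity of everything in sight, the Gabriel--Rentschler computation $\mathcal{K}(B)+d=\mathcal{K}(B[t_1,\dots,t_d])=\mathcal{K}(A[t_1,\dots,t_d])=d$ does force $B$ to be left artinian, and Amitsur's theorem together with the nilpotency of $J(A)$ gives $J(C)=J(A)[t_1,\dots,t_d]$, hence $(A/J(A))[t_1,\dots,t_d]\cong (B/J(B))[t_1,\dots,t_d]$. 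Two things then go wrong. First, the Wedderburn bookkeeping is not justified as stated: for $d\geq 2$ finitely generated projectives over $D_i[t_1,\dots,t_d]$ need not be free (the paper itself recalls from \cite{Lam} that $D[x,y]$ is not $\mathcal{PF}$ for a division ring $D$), so neither the claim that every complete decomposition of $1\in M_{n_i}(D_i[t_1,\dots,t_d])$ into orthogonal primitive idempotents has exactly $n_i$ terms, nor the claim that a corner by a primitive idempotent is isomorphic to $D_i[t_1,\dots,t_d]$, is automatic; this part is probably repairable (uniform dimension recovers $n_i$, and the Goldie quotient ring gives $M_{n_i}(Q(D_i[t_1,\dots,t_d]))$), but a repair is needed before the correct observation $D_i=\{0\}\cup (D_i[t_1,\dots,t_d])^{\times}$ can be used. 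Second, and decisively, the final step is missing, as you yourself acknowledge: knowing $A/J(A)\cong B/J(B)$ and matching the layers $J^k/J^{k+1}$ as bimodules does not determine the ring, because the multiplication/extension data between layers is not controlled, and the proposed induction ``correct by an inner automorphism and a translation of the variables'' identifies no obstruction and proves nothing about its vanishing. This is not a formality: already for $A=M_n(D)$ with $n\geq 2$, conjugation by the unit $1+te_{12}$ is an automorphism of $A[t]$ that does not map $A$ into $A$, so the constants genuinely cannot be read off and the entire content of the theorem sits in exactly the normalization you postpone.

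For comparison, the proof in the cited source does not try to reconstruct $A$ inside $A[t_1,\dots,t_d]$ at all; it works (in outline) with the notions of strongly detectable and strongly Hopfian algebras: one shows, again using the nilpotent radical and Amitsur's theorem, that for any isomorphism $\varphi:A[t_1,\dots,t_d]\to B[t_1,\dots,t_d]$ the algebra $B$ together with the central elements $\varphi(t_1),\dots,\varphi(t_d)$ generates $B[t_1,\dots,t_d]$; killing these elements then exhibits one algebra as a homomorphic image of the other, and a Hopfian argument (a left noetherian ring has no proper surjective ring endomorphism) upgrades the surjection to an isomorphism. If you want to complete your approach, aiming at such a generation statement modulo the radical is a far more tractable target than an explicit normalization of the isomorphism.
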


\begin{theorem}[\cite{HelbertZhang}, Theorem 0.1]
Let $A$ be a $K$-algebra, $K$ a field with $charK=0$. If $A$ is a
noetherian $($left and right$)$ $AS$ algebra generated in degree
$1$ with ${\rm gld}(A)=3$ and $A$ is not $PI$, then $A$ is
cancellative.
\end{theorem}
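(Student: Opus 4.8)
The plan is to deduce cancellation of $A$ from one of the two sufficient conditions already established in this section: a maximal Makar-Limanov invariant (Theorem \ref{theorem16.4.17}) or a trivial center (Theorem \ref{theorem20.2.1}). First I would record that the hypotheses make $A$ a connected graded noetherian domain of finite Gelfand-Kirillov dimension: by Remark \ref{remark17.3.2}(iii) every $AS$ algebra of dimension $\leq 3$ is a noetherian domain, and being $AS$ it has ${\rm GKdim}(A)<\infty$ (indeed ${\rm GKdim}(A)={\rm gld}(A)=3$ for algebras generated in degree $1$). Thus $A$ is a finitely generated $K$-algebra that is a domain of finite Gelfand-Kirillov dimension, so Theorem \ref{theorem16.4.17} applies. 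Since ${\rm char}\,K=0$, it suffices by part (ii) of that theorem to prove that ${\rm ML}(A)=A$, i.e. that $A$ carries no nonzero locally nilpotent $K$-derivation.

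The heart of the argument is therefore to establish ${\rm LND}(A)=\{0\}$. Here I would use the connected $\mathbb{N}$-grading $A=\bigoplus_{n\geq 0}A_n$ with $A_0=K$ and $A$ generated by $V:=A_1$. Given a nonzero $\delta\in{\rm LND}(A)$, its leading homogeneous component is a nonzero graded derivation that is again locally nilpotent (a standard feature of the Makar-Limanov calculus), so one reduces to a homogeneous $\delta$, determined by its restriction $\delta|_V\colon V\to A$. Compatibility of $\delta$ with the defining relations of $A$ (the three quadratic relations in the three-generator family, or the two cubic relations in the two-generator family of the Artin-Tate-Van den Bergh classification) then cuts the space of admissible $\delta|_V$ down drastically, while local nilpotence forces the induced operator on $V$ to be nilpotent.

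The decisive and hardest step is to show that the non-$PI$ hypothesis rules out any such nonzero $\delta$. I would invoke the geometric classification of these algebras by a point scheme $E\subseteq\mathbb{P}^2$ (or $\mathbb{P}^1\times\mathbb{P}^1$) together with an automorphism $\sigma$, recalling that $A$ fails to be $PI$ exactly when $\sigma$ has infinite order. A nonzero graded locally nilpotent derivation would, through its exponential, generate a positive-dimensional family of graded automorphisms of $A$ preserving the geometric datum; this contradicts the rigidity (discreteness of the relevant automorphism group) forced by an infinite-order $\sigma$, whence $\delta|_V=0$ and $\delta=0$. Carrying this out requires a case analysis across the types of the classification, and the degenerate cases in which $E$ is not a smooth cubic must be treated separately; in several of those it is cleaner to compute the center directly, verify $Z(A)=K$, and invoke Theorem \ref{theorem20.2.1} instead. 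Once ${\rm LND}(A)=\{0\}$ is secured, ${\rm ML}(A)=\bigcap_{\delta\in{\rm LND}(A)}\ker(\delta)=A$, and Theorem \ref{theorem16.4.17}(ii) delivers the cancellation of $A$.
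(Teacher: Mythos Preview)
The paper does not contain a proof of this theorem: it is quoted as Theorem 0.1 of \cite{HelbertZhang} and used only as motivation for Problem 5. There is therefore no in-paper proof to compare your proposal against.

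That said, your outline points in the right direction for how the cited result is actually obtained. The reduction to ${\rm ML}(A)=A$ via Theorem \ref{theorem16.4.17}(ii), the passage from an arbitrary locally nilpotent derivation to its top homogeneous component, and the translation of ``not $PI$'' into ``the automorphism $\sigma$ on the point scheme has infinite order'' are all correct moves and are used in \cite{HelbertZhang}. Where your sketch falls short is the ``decisive and hardest step.'' The heuristic you offer --- that a nonzero graded locally nilpotent derivation exponentiates to a one-parameter family of graded automorphisms, contradicting rigidity forced by an infinite-order $\sigma$ --- is not by itself a proof: several non-$PI$ three-dimensional $AS$ algebras \emph{do} have positive-dimensional graded automorphism groups (quantum affine $3$-spaces, for instance), so no contradiction arises from a dimension count alone. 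The argument in \cite{HelbertZhang} is a genuine case-by-case analysis across the Artin--Schelter/Artin--Tate--Van den Bergh classification, computing in each family either ${\rm LND}(A)$ directly, or the center, or invoking discriminant/retraction techniques, and the non-$PI$ hypothesis is used differently in different families. Your proposal correctly identifies the architecture but does not supply the computations that carry the weight.
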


Observe that if in the previous theorem we could remove the
condition \textit{not $PI$}, then the classical commutative ZCP
problem would be solved for $n=3$. With this, we can formulate our
last problem in the next subsection.

\subsection{Problem 5}

\begin{itemize}
\item Let $B$ be a $\mathcal{SAS}$ algebra over a field
$K$ with $char(K)=0$. Assume that $B$ is noetherian $($left and
right$)$, generated in degree $1$ and ${\rm gld}(B)=3$. Is $B$
cancellative?
\end{itemize}

%\begin{center}
%\textbf{Acknowledgements}
%\end{center}

%%%%%%%%%%%%%%%%%%%%%%%%%%%%%%%%%%%%%%%%%%%%%%%%%
%%%%%%%%%%%%%%%%%%%%%%%%%%%%%%%%%%%%%%%%%%%%%%%%%


\begin{thebibliography}{200}

\bibitem{Abhyankar}\textbf{Abhyankar, S., Eakin, P., and Heinzer, W.}, \textit{On the uniqueness of the coefficient ring in a
polynomial ring}, J. Algebra, 23, 1972, 310-342.

\bibitem{Lezama-OreGoldie}\textbf{Acosta, J.P., Chaparro, C., Lezama, O., Ojeda, I., and Venegas, C.}, \textit{Ore and Goldie theorems for
skew PBW extensions}, Asian-European Journal of Mathematics, 6 (4), 2013, 1350061-1; 1350061-20.

\bibitem{Acosta2}\textbf{Acosta, J.P., Lezama, O. and Reyes, M.A.}, \textit{Prime ideals of skew $PBW$ extensions}, Revista de la Unión Matemática Argentina,
56(2), 2015, 39-55.

\bibitem{Alev}\textbf{Alev, J., Ooms, A. and Van den Bergh, M.}, \textit{The Gelfand-Kirillov conjecture for Lie algebras
of dimension at most eight}, J. of Algebra, 227, 2000, 549-581.

\bibitem{Alev2}\textbf{Alev, J. and Dumas, F.}, \textit{Sur le corps des fractions de certaines algèbres quantiques}, J. of Algebra, 170, 1994, 229-265.

\bibitem{Artamonov2}\textbf{Artamonov, V.},
\textit{Serre's quantum problem}, Russian Math. Surveys, 53(4),
1998, 657-730.

\bibitem{Artamonov3}\textbf{Artamonov, V.},
\textit{On projective modules over quantum polynomials}, Journal
of Mathematical Sciences, 93(2), 1999, 135-148.

\bibitem{ArtamonovLezamaFajardo}\textbf{Artamonov, V. A., Lezama, O., and Fajardo, W.}, \textit{Extended modules
and Ore extensions}, Communications in Mathematics and Statistics,
4 (2), 2016, 189-202.

\bibitem{Artin1}\textbf{Artin, M., Schelter, W.}, \textit{Graded algebras of global dimension 3}, Advances in Mathematics, 66, 1987, 171-216.

%\bibitem{Artin}\textbf{Artin, M., Tate, J. and Van den Bergh, M.}, \textit{Some algebras associated to automorphisms of elliptic curves}, The
%Grothendieck Festchrift, Vol. I, Birkhäuser Boston, 1990, 33-85.

\bibitem{Artin2}\textbf{Artin, M. and Zhang J. J.}, \textit{Noncommutative projective schemes}, Adv. Mah. 109 (2), 1994, 228-287.

\bibitem{Bass1}\textbf{Bass, H.},
\newblock \textit{Projective modules over algebras}, Annals of Math. 73, 532-542, 1962.

\bibitem{Bavula3}\textbf{Bavula, V.V}, \textit{The algebra of one-sided inverses of a polynomial
algebra}, J. Pure Appl. Algebra, 214, 2010, 1874-1897.

%\bibitem{Beaulieu}\textbf{Beaulieu, R. A., Jensen, A. and J\o ndrup, S.}, \textit{Towards a more general notion of Gelfand-Kirillov dimension},
%Israel Journal of Mathematics, 93, 1996, 73-92.

\bibitem{BellZhang}\textbf{Bell, J., and Zhang, J. J.}, \textit{Zariski cancellation problem for noncommutative
algebras}, Selecta Math. (N.S.), 23 (3), 2017, 1709--1737.

\bibitem{Bois}\textbf{Bois, J-M.}, \textit{Gelfand-Kirillov conjecture in positive characteristics},
J. of Algebra, 305, (2006), 820-844.

\bibitem{Cauchon}\textbf{Cauchon, G}, \textit{Effacement des dérivations et spectres premiers des algèbres quantiques},
J. of Algebra, 2003, 476-518.

\bibitem{Zhangetal}\textbf{Ceken, S., Palmieri, J., Wang, Y.-H., and Zhang, J.J.}, \textit{The discriminant controls automorphism groups of
noncommutative algebras}, Adv. Math., 269, 2015, 551-584.

\bibitem{Zhangetal2}\textbf{Ceken, S., Palmieri, J., Wang, Y.-H., and Zhang, J.J.}, \textit{The discriminant criterion and the automorphism groups of
quantized algebras}, Adv. Math., 286, 2016, 754-801.

\bibitem{Essen}\textbf{Essen, A. v. d.}, \textit{Polynomial Automorphisms and the Jacobian Conjecture}, Société
Mathématique de France, Séminaires et Congrès, preprint.

\bibitem{Fajardo2}\textbf{Fajardo, W.}, \textit{Extended modules and skew $PBW$ extensions}, Ph.D. Thesis, Universidad Nacional de
Colombia, Bogotá, 2018.

\bibitem{Fajardo3}\textbf{Fajardo, W.}, \textit{A computational Maple library for skew PBW extensions}, to appear in Fundamenta Informaticae.

\bibitem{FajardoLezama}\textbf{Fajardo, W. and Lezama, O.}, \textit{Elementary
matrix-computational proof of Quillen-Suslin theorem for Ore
extensions}, Fundamenta Informaticae, 164, 2019, 41-59.

\bibitem{Fujita2}\textbf{Fujita, T.}, \textit{On Zariski problem}, Proc. Japan Acad., 55(A), 1979, 106-110.

\bibitem{Futorny}\textbf{Futorny, V., Molev, A., and Ovsienko, S.}, \textit{The Gelfand-Kirillov conjecture and
Gelfand-Tsetlin modules for finite W-algebras}, Advances in
Mathematics, 223, (2010), 773-796.

\bibitem{Gabriel}\textbf{Gabriel, P.}, \textit{Des catégories abéliennes}, Bull. Soc. Math. France 90, 1962, 323-448.

\bibitem{Gaddis}\textbf{Gaddis, J. D.}, \textit{PBW deformations of Artin-Schelter regular algebras and their
homogenizations}, Ph.D. thesis, The University of Wisconsin-Milwaukee, 2013.

\bibitem{Gaddis2}\textbf{Gaddis, J. D.}, \textit{PBW deformations of Artin-Schelter regular algebras},
J. Algebra Appl. 15, 2016, 1650064-1-1650064-15.

\bibitem{Gallego}\textbf{Gallego, C. and Lezama, O.}, \textit{Projective modules and Gröbner bases for skew $PBW$ extensions}, Dissertationes Math.,
521, 2017, 1-50.

%\bibitem{Gallego4}\textbf{Gallego, C. and Lezama, O.}, \textit{$d$-Hermite rings and skew PBW extensions}, São Paulo Journal of Mathematical Sciences,
%10 (1), 2016, 60-72; DOI 10.1007/s40863-015-0010-8.

\bibitem{GK}\textbf{Gelfand, I. and Kirillov, A.}, \textit{Sur le corps liés aux alg\`{e}bres enveloppantes des
alg\`{e}bres de Lie}, Math. IHES, 31, 1966, 509-523.

%\bibitem{Ginzburg1}\textbf{Ginzburg, V.}, \textit{Calabi--Yau algebras}, 2006,
%arXiv:math.AG/0612139v3.

%\bibitem{Goetz}\textbf{Goetz, P.}, \textit{The Noncommutative Algebraic Geometry of Quantum Projective Spaces}, Ph.D. Thesis, University of Oregon, 2003.

\bibitem{Grothendieck}\textbf{Grothendieck, A.}, \textit{Sur quelques points d'algèbre homologique}, Tôhoku
Math. J. (2) 9 (1957), 119-221; English translation by Marcia L. Barr and Michael Barr, September
10, 2010.

\bibitem{Gupta}\textbf{Gupta, N.}, \textit{On Zariski's cancellation problem in positive characteristic}, Adv. Math.,
264, 2014, 296-307.

\bibitem{Gupta2}\textbf{Gupta, N.}, \textit{On the Cancellation Problem for the Affine Space ${\mathbb A}^3$ in
characteristic $p$}, Inventiones Math., 195, 2014 (1), 279--288.

\bibitem{Joseph}\textbf{Joseph, A.}, \textit{Proof of the Gelfand-Kirillov conjecture for solvable Lie algebras},
Pro. Amer. Math. Soc., 45, (1974), 1-10.

\bibitem{Joseph1}\textbf{Joseph, A.}, \textit{A generalization of the Gelfand-Kirillov conjecture},
American Journal of Mathematics, 99, (1977), 1151-1165.

%\bibitem{Kanazawa}\textbf{Kanazawa, A.}, \textit{Non-commutative projective Calabi-Yau schemes},
%Journal of Pure and Applied Algebra, 19 (7), 2015, 2771-2780.

\bibitem{Lam}\textbf{Lam, T.Y.},
\textit{Serre's Problem on Projective Modules}, Springer
Monographs in Mathematics, Springer, 2006.

\bibitem{Lezama3}\textbf{Lezama, O.}, \textit{Some aplications of Gröbner bases in homological algebra},
S$\tilde{a}$o Paulo Journal of Mathematical Sciences, 3, 2009,
25-59.

\bibitem{LezamaGallego}\textbf{Lezama, O. and Gallego, C.}, {\em Gr\"obner bases for ideals of sigma-PBW extensions}, Communications in Algebra,
39 (1), 2011, 50-75.

\bibitem{lezamalatorre}\textbf{Lezama, O. and Latorre, E.}, \textit{Non-commutative algebraic geometry of semi-graded rings},
International Journal of Algebra and Computation, 27 (4), 2017,
361-389.

\bibitem{lezamareyes1}\textbf{Lezama, O. \& Reyes, M.}, {\em Some homological properties of skew $PBW$
extensions}, Comm. in Algebra, 42, (2014), 1200-1230.

\bibitem{LezamaHelbert2}\textbf{Lezama, O. and Venegas, H.}, \textit{Center of skew PBW extensions},
arXiv: 1804.05425 [math.RA], 2018.

\bibitem{LezamaZhang} \textbf{Lezama, O., Wang, Y.-H., and Zhang, J.J.}, \textit{Zariski cancellation problem for non-domain
noncommutative algebras},to appear in Mathematische Zeitschrift.

\bibitem{Lu}\textbf{Lu, D.M. and Zhou G.-S}, \textit{Artin-Schelter regular algebras of dimension five with two generators},
Journal of Pure and Applied Algebra, 218, 2014, 937-961.

\bibitem{Lu1}\textbf{Lu, D.M., Shen, G., and Zhou G.-S}, \textit{Homogeneous PBW deformation for Artin-Schelter regular algebras},
Bull. Aust. Math. Soc., 91, 2015, 53-68.

\bibitem{McConnell}\textbf{McConnell, J. and Robson, J.}, \textit{Non-commutative Noetherian Rings},
Graduate Studies in Mathematics, AMS, 2001.

\bibitem{Miyanishi}\textbf{Miyanishi, M. and Sugie, T.}, \textit{Affine surfaces containing cylinderlike open sets}, J. Math. Kyoto
Univ., 20, 1980, 11-42.

\bibitem{Ooms}\textbf{Ooms, A.}, \textit{The Gelfand-Kirillov conjecture for semi-direct products of
Lie algebras}, J. of Algebra, 305, (2006), 901-911.

\bibitem{Quillen}\textbf{Quillen, D.},
\textit{Projective modules over polynomial rings}, Invent. Math.,
36, 1976, 167-171.

\bibitem{reyesA}\textbf{Reyes, A.}, \textit{Armendariz modules over skew PBW extensions},
Comm. in Algebra, 47 (3), 2019, 1248-1270

\bibitem{reyessuarez}\textbf{Reyes, A. and Suárez, H.}, \textit{Sigma-PBW extensions of skew Armendariz rings}, Advances in Applied
Clifford Algebras, 27 (4), 2017, 3197-3224; DOI
10.1007/s00006-017-0800-4.

\bibitem{reyessuarez2}\textbf{Reyes, A. and Suárez, H.}, \textit{A notion of compatibility for Armendariz and Baer properties over skew
PBW extensions}, Revista de la Unión Matemática Argentina, Vol.
59, no. 1, 2018, 157-178.

\bibitem{ReyesSuarez}\textbf{Reyes, A. and Suárez, H.}, \textit{Skew
Poincaré-Birkhoff-Witt extensions over weak zip rings},
Contributions to Algebra and Geometry, 60 (2), 2019, 197-216.

\bibitem{Rogalski}\textbf{Rogalski, D.}, \textit{An introduction to non-commutative projective algebraic geometry},
arXiv:1403.3065 [math.RA].

\bibitem{Russell}\textbf{Russell, P,}, \textit{On Affine-Ruled rational surfaces}, Math. Ann., 255(3), 1981, 287-302.

\bibitem{Serre}\textbf{Serre, J.P.}, \textit{Faisceaux algébriques cohérents}, Ann. Math., 61, 1955, 191-278.

\bibitem{Smith}\textbf{Smith, S.P.}, \textit{Non-commutative algebraic geometry}, Department of Mathematics, University of Washington, 2000.

\bibitem{Suslin1}\textbf{Suslin, A.A.},
\textit{Projective modules over polynomial rings are free}, Soviet
Math. Dokl., 17, 1976, 1160-1164.

\bibitem{HelbertZhang}\textbf{Tang, X., Venegas, H. and Zhang,
J.}, Cancellation problem for $AS$-regular algebras of dimension
three, arXiv:1904.07281v1.

\bibitem{Verevkin}\textbf{Verevkin, A. B.}, \textit{Serré injective sheaves}, Mat.
Zametki, 1992, 52 (4), 35-41.

\bibitem{XuHuangWang}\textbf{Xu, Y., Huang, H.-L. and Wang, D.}, \textit{Realization of $PBW$-deformations of type $\mathbb{A}_n$ quantum groups via multiple Ore
extensions}, J. Pure Appl. Algebra 233, 2019, 1531-1547.

%\bibitem{Verevkin2}\textbf{Verevkin, A. B.}, \textit{On a noncommuttive analogue of the category of coherent sheaves on a projective scheme}, Amer. Math.
%Soc. Transl., 1992, 151 (2), 41-53.

\end{thebibliography}
\end{document}